\theoremstyle{plain}
\newtheorem{thm}{\protect\theoremname}
\theoremstyle{definition}
\newtheorem{defn}[thm]{\protect\definitionname}
\theoremstyle{remark}
\newtheorem{rem}[thm]{\protect\remarkname}
\theoremstyle{plain}
\newtheorem{lem}[thm]{\protect\lemmaname}
\theoremstyle{plain}
\newtheorem{prop}[thm]{\protect\propositionname}
\providecommand{\definitionname}{Definition}
\providecommand{\lemmaname}{Lemma}
\providecommand{\propositionname}{Proposition}
\providecommand{\remarkname}{Remark}
\providecommand{\theoremname}{Theorem}
\def\R{{\mathbf R}}
\def\T{{\mathbb T}}
\def\N{{\mathbf N}}
\def\Z{{\mathbf Z}}
\def\d{{\partial}}
\def\eps{\varepsilon}
\DeclareMathOperator{\RE}{Re}
\DeclareMathOperator{\IM}{Im}
\numberwithin{equation}{section}
\date\today
\title[QHD system with damping and relaxation-time limit]{The time-relaxation limit for weak solutions to the quantum hydrodynamics system}
\author[P. Antonelli]{Paolo Antonelli \,\orcidlink{0000-0001-8211-8296}}
\address{Gran Sasso Science Institute, viale Francesco Crispi, 7, 67100 L'Aquila, Italy}
\email{paolo.antonelli@gssi.it}
\author[P. Marcati]{Pierangelo Marcati*\,\orcidlink{0000-0001-6528-6562}}
\address{Gran Sasso Science Institute, viale Francesco Crispi, 7, 67100 L'Aquila, Italy}
\email{pierangelo.marcati@gssi.it}
\author[H. Zheng]{Hao Zheng\,\orcidlink{0000-0002-7098-7731}}
\address{Chinese Academy of Science, Zhongguancun Est. Rd., Haidian District, Beijing, China, 100190}
\email{zhenghao@amss.ac.cn}
\subjclass{Primary: 35Q81, 35Q35; Secondary: 35Q55, 76Y05.}
 \keywords{quantum hydrodynamics, time-relaxation limit}
\begin{document}
\begin{abstract}
This paper analyzes weak solutions of the quantum hydrodynamics (QHD) system with 
a collisional term posed on the one-dimensional torus. The main goal of our analysis is to rigorously prove the time-relaxation limit towards solutions to the quantum drift-diffusion (QDD) equation.
\newline
The existence of global in time, finite energy weak solutions can be proved by straightforwardly exploiting the polar factorization and wave function lifting tools previously developed by the authors. However, the sole energy bounds are not sufficient to show compactness and then pass to the limit.
\newline
For this reason, we consider a class of more regular weak solutions (termed GCP solutions), determined by the finiteness of a functional involving the chemical potential associated with the system. For solutions in this class and bounded away from vacuum, we prove the time-relaxation limit and provide an explicit convergence rate.
\newline
Our analysis exploits compactness tools and does not require the existence (and smoothness) of solutions to the limiting equations or the well-preparedness of the initial data.
\newline
As a by-product of our analysis, we also establish the existence of global in time $H^2$ solutions to a nonlinear Schr\"odinger-Langevin equation and construct solutions to the QDD equation as strong limits of GCP solutions to the QHD system.
\end{abstract}
\maketitle
\section{Introduction}\label{sect:intro}

The main object of this paper is to study the relaxation limit of weak solutions for the collisional one-dimensional quantum hydrodynamic (QHD) system,
\begin{equation}\label{eq:QHD}
\begin{cases}
\d_{t}\rho+\d_xJ=0\\
\d_{t}J+\d_x\left(\frac{J^2}{\rho}\right)+\d_x p(\rho)+\rho\d_x V=\frac{1}{2}\rho\d_x\left(\frac{\d_x^2\sqrt\rho}{\sqrt\rho}\right)-\frac{1}{\tau}J\\
-\d_x^2V=\rho-\mathcal{C}(x),\quad (\rho,J)(0,x)=(\rho_0,J_0)(x),
\end{cases}
\end{equation}
for $(t,x)\in[0,T)\times\T$, $T>0$. $\T$ denotes the one-dimensional torus that, without loss of generality, we assume $\T=\R/\Z$. The unknowns $\rho$ and $J$ in \eqref{eq:QHD} represent the mass (charge) and the momentum (current) densities of the quantum fluid, respectively, and $p(\rho)$ is the isentropic pressure term. The parameter $\tau>0$ denotes the scaled momentum relaxation time.

In order to study the relaxation limit, we also provide a global existence result for weak solutions to \eqref{eq:QHD} and a global wellposedness result for the associated Schr\"odinger-Langevin equation (see \eqref{eq:NLS} below) for non-vacuum initial states. 
Furthermore, as a by-product of our analysis we also show a global existence result for solutions of the limiting equation (Quantum Drift-Diffusion equation, see \eqref{eq:qdde} below).
For this reason, our analysis does not need to assume any property on the solutions of the limiting equations, as usually done with relative energy/entropy techniques, for instance.

Among several applications, model \eqref{eq:QHD} can be used also to model carrier transport in semiconductor devices \cite{J}, where the function $V$ denotes a self-consistent electric potential, satisfying the Poisson equation, and $\mathcal{C}(x)$ denotes the density of background positively charged ions.
It is also possible to consider, with similar analysis, different Poisson couplings, e.g. repulsive potentials, but we will not develop this aspect in detail.
When the doping profile is relevant at nanoscales, quantum effects must be taken into account \cite{AI, G}.
\newline
The usual stress tensor for quantum fluids takes the following form \cite{BP}
\begin{equation*}
\Pi = -p(\rho) + \frac14 (\rho\d_x^2\log\rho),
\end{equation*}
whereas its quantum part  satisfies the following identities
\begin{equation}\label{eq:bohm}
\frac12\rho\d_x\left(\frac{\d_{x}^2\sqrt{\rho}}{\sqrt{\rho}}\right)=\frac14\d_x(\rho\d_x^2\log\rho)=\frac14\d_{x}^3\rho-\d_x[(\d_x\sqrt{\rho})^2].
\end{equation}
The left hand side is commonly interpreted as a quantum pressure expressed in terms of a quantum enthalpy, while the right hand side provides the decoupling into a linear dispersive tensor and a quadratic term taking into account the Fisher information \cite{HC}.
\newline
Quantum fluid models of this type have applications in various physical phenomena, in particular where the thermal de Broglie wavelength is comparable with the interatomic distance.
For instance, this is the case in superfluidity of Helium II \cite{K}, Bose-Einstein condensates \cite{PS} and quantum plasmas \cite{H}. 
\newline 
Besides its relevance in semiconductor device modeling, the collisional model \eqref{eq:QHD} with $\tau>0$ can also be seen as a toy model to study the interaction of a quantum fluid with a classical one, as considered in the two-fluid model proposed by Landau and Tisza, see the textbooks \cite[Section 17]{K} and \cite[Chapter XVI]{LL}. 
The derivation of the collisional term, in the case of semiconductor modeling, can be obtained either by using a phenomenological approach, taking into account the thermal interactions between charged particles (see  \cite{AI,AT,G} or Section 7 in \cite{AM_b}), or by using kinetic-type models as an approximation of intra-band collisions, see \cite{BW}.

The main goal of this paper is the study of the relaxation-time limit of system \eqref{eq:QHD}. More precisely, by introducing the scaling \cite{MN} (see also \cite{DM, MMS} for more general discussion),
\begin{equation}\label{eq:rs_intro}
t'=\tau t,\quad (\rho_\tau,J_\tau)(t',x)=\left(\rho,\frac{1}{\tau}J\right)\left(\frac{t'}{\tau},x\right),
\end{equation}
the system \eqref{eq:QHD} can be reformulated as
\begin{equation}\label{eq:QHD_rs_intro}
\left\{\begin{aligned}
&\d_{t'}\rho_\tau+\d_x J_\tau=0\\
&\tau^2\d_{t'} J_\tau+\tau^2\d_x\left(\frac{J_\tau^2}{\rho_\tau}\right)+\d_x p(\rho_\tau)+\rho_\tau\d_xV_\tau=\frac{1}{2}\rho_\tau\d_x\left(\frac{\d_x^2\sqrt\rho_\tau}{\sqrt\rho_\tau}\right)-J_\tau\\
&-\d_x^2 V_\tau=\rho_\tau-\mathcal{C}(x),\quad (\rho_\tau,J_\tau)(0,x)=(\rho_0,J_{\tau,0})(x),
\end{aligned}\right.
\end{equation}
As $\tau\to 0$, system \eqref{eq:QHD_rs_intro} formally converges to the following Quantum Drift-diffusion (QDD) equation, 
\begin{equation}\label{eq:qdde}
\begin{cases}
\d_{t'}\bar\rho+\d_x\left[\frac{1}{2}\bar\rho\d_x\left(\frac{\d_x^2\sqrt{\bar\rho}}{\sqrt{\bar\rho}}\right)-\d_x p(\bar\rho)-\bar\rho\d_x\bar{V}\right]=0\\
-\d_x^2\bar V=\bar\rho-\mathcal{C}(x),\quad \rho_\tau(0,x)=\rho_0(x),
\end{cases}
\end{equation}
see for instance \cite{J, R}.
\newline
The main goal of our paper is the rigorous justification of this limit in a suitable functional framework that will be explained below.

\subsection{Physical framework and weak solutions}

A natural framework to study system \eqref{eq:QHD} is the space of states with finite mass and energy, characterised by the total mass
\begin{equation}\label{eq:mass}
M(t)=\int_\T \rho(t)dx\equiv M_0,
\end{equation}
and total energy functional
\begin{equation}\label{eq:en}
E(t)=\int_\T \frac12(\d_x\sqrt\rho)^2+\frac{J^2}{2\rho}+f(\rho)+\frac12(\d_xV)^2dx.
\end{equation}
The internal energy $f(\rho)$, appearing in \eqref{eq:en}, is related to the pressure $p(\rho)$ via the relation
\begin{equation}\label{eq:pres}
p(\rho)=f'(\rho)\rho-f(\rho),
\end{equation}
or equivalently
\[
f(\rho)=\rho\int^\rho \frac{p(s)}{s^2}ds.
\]
In this paper we will consider constitutive relations for the pressure $p=p(\rho)$ such that $p\in C^2((0,\infty))\cap C^1([0, \infty))$ and $f=f(\rho)\geq0$. 
Moreover, in Theorem \ref{thm:disp} below we will restrict to classical $\gamma-$law equations of state for the pressure. 
In the Poisson equation in \eqref{eq:QHD} satisfied by the  electric potential $V$, we assume $\mathcal{C}(x)\in L^1(\T)$ such that $\int_\T\mathcal{C}(x)dx=M_0$, which guarantees the solvability of the Poisson equation. 
By combining the authors' previous works \cite{AM1, AM2} and \cite{AMZ1}, it is possible to prove an existence theorem for global in time finite energy weak solutions, that is stated below here for completeness. 

\begin{prop}[Global Existence of finite energy weak solutions]\label{thm:glob} 
Let $(\rho_0, J_0)$ be a finite energy initial datum such that,
\begin{equation}\label{eq:C1_intro}
\|\sqrt{\rho_0}\|_{H^1}^2+\|J_0/\sqrt{\rho_0}\|_{L^2}^2\leq C
\end{equation}
and let us further assume that $J_0/\sqrt{\rho_0}=0$ a.e. on the set $\{\rho_0=0\}$. Then, there exists a global in time finite energy weak solution to the Cauchy problem that satisfies
\begin{equation}\label{eq:B1_1}
E(t)+\frac1\tau\int_0^t\int_{\T}\frac{J^2}{\rho}\,dxds\leq C E(0).
\end{equation}
\end{prop}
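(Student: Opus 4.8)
The plan is to realise the weak solution through the wave function lifting correspondence between \eqref{eq:QHD} and the associated Schr\"odinger--Langevin equation \eqref{eq:NLS}, and then read off the hydrodynamic quantities by polar factorization, using the lemmas developed in \cite{AM1,AM2,AMZ1}. First I would lift the datum: finite energy gives $\sqrt{\rho_0}\in H^1(\T)$ and $\Lambda_0:=J_0/\sqrt{\rho_0}\in L^2(\T)$, and the compatibility condition $J_0/\sqrt{\rho_0}=0$ a.e.\ on $\{\rho_0=0\}$ is precisely what the wave function lifting lemma of \cite{AMZ1} needs in order to produce $\psi_0\in H^1(\T;\C)$ with $|\psi_0|^2=\rho_0$, $\IM(\bar\psi_0\d_x\psi_0)=J_0$ and $\|\d_x\psi_0\|_{L^2}^2=\|\d_x\sqrt{\rho_0}\|_{L^2}^2+\|\Lambda_0\|_{L^2}^2$; together with $\|\psi_0\|_{L^2}^2=M_0=\|\sqrt{\rho_0}\|_{L^2}^2$ this shows that \eqref{eq:C1_intro} controls $\|\psi_0\|_{H^1}$.

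Next I would construct a global-in-time $H^1(\T)$ solution $\psi$ of \eqref{eq:NLS} with datum $\psi_0$. Its conservative part --- Schr\"odinger with the enthalpy-type nonlinearity $f'(|\psi|^2)\psi$ and the Poisson coupling $-\d_x^2 V=|\psi|^2-\mathcal{C}(x)$ --- is globally well posed in $H^1(\T)$ by energy methods: in one dimension $H^1(\T)\hookrightarrow L^\infty(\T)$, so for the class of pressures under consideration the nonlinearity acts continuously on $H^1$, while $f\ge0$ and conservation of mass make the Hamiltonian coercive on $H^1$. The genuinely new ingredient is the friction (Kostin) term: under the Madelung ansatz $\psi=\sqrt\rho\,e^{iS}$ it equals $\frac1\tau\bigl(S-\langle S\rangle\bigr)\psi$, with $\langle S\rangle$ the spatial mean of the phase, and it reproduces the damping $-\frac1\tau J$ in the momentum equation. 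I would treat it by the fractional-step scheme of \cite{AM1,AM2}, alternating the conservative Schr\"odinger flow with the friction half-step; the latter, with $|\psi|$ frozen, reduces to the linear (nonlocal) ODE $\d_t S=-\frac1\tau(S-\langle S\rangle)$, which is solved exactly and leaves $|\psi|$ pointwise unchanged, so that no vacuum is generated and the whole construction stays within the polar factorization framework. The energy is non-increasing along each half-step, giving stability, and one passes to the limit in the time-step parameter using the $H^1$ bound together with the equicontinuity in time read directly off \eqref{eq:NLS}.

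The key a priori estimate is \eqref{eq:B1_1}. Via the Madelung transform, $E(t)$ in \eqref{eq:en} coincides, up to the electrostatic term, with the Hamiltonian of \eqref{eq:NLS}; differentiating it along the flow (equivalently, testing \eqref{eq:NLS} against $\d_t\bar\psi$) the conservative terms together with $\frac12\|\d_xV\|_{L^2}^2$ combine into a total time derivative, while the friction term contributes exactly $-\frac1\tau\int_\T\frac{J^2}{\rho}\,dx\le0$, with $\rho=|\psi|^2$ and $J=\IM(\bar\psi\d_x\psi)$; integrating in time yields \eqref{eq:B1_1}. Finally, with the global $H^1$ solution $\psi$ in hand, I set $\rho:=|\psi|^2$, $J:=\IM(\bar\psi\d_x\psi)$ and, by polar factorization, introduce the polar factor $\phi$ and $\Lambda:=\IM(\bar\phi\d_x\psi)\in L^2$, so that $\sqrt\rho=|\psi|\in H^1$ and $J=\sqrt\rho\,\Lambda$. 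The polar factorization identities (the Leibniz rules for $\d_x\sqrt\rho$, for the current, and for the Bohm term \eqref{eq:bohm}) then show that $(\rho,J)$ is a finite energy weak solution of \eqref{eq:QHD} on $[0,T)\times\T$ for every $T>0$, hence a global one, satisfying \eqref{eq:B1_1}.

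I expect the main obstacle to be the friction term. Unlike in the conservative QHD system, it is nonlocal and is expressed through the phase $S$, which is not globally single-valued on $\T$ and is undefined on the vacuum set, so one cannot simply invoke a standard NLS well-posedness statement; it is the fractional-step construction, in which the friction half-step is solved exactly and does not touch $|\psi|$, that circumvents this difficulty, and, as emphasised in the introduction, it works ``straightforwardly'' here precisely because the energy bound \eqref{eq:B1_1} is all that is needed at the level of mere existence --- in contrast with the relaxation limit, for which it is not.
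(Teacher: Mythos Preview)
Your proposal is correct and follows precisely the approach the paper indicates: Proposition~\ref{thm:glob} is stated without proof, with the remark that it follows ``by combining the authors' previous works \cite{AM1,AM2} and \cite{AMZ1}'', namely wave function lifting of the finite energy datum, global $H^1$ solvability of the Schr\"odinger--Langevin equation via the fractional-step (operator splitting) scheme to handle the Kostin friction term, and polar factorization to recover the hydrodynamic variables together with the energy dissipation identity \eqref{eq:B1_1}. The only minor inaccuracy is the $\langle S\rangle$ subtraction in the friction half-step---the paper uses $\frac{1}{\tau}S\psi$ directly (cf.\ \eqref{eq:gradS})---but this is a pure gauge choice that does not affect $(\rho,J)$.
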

This Proposition provides the existence of arbitrarily large finite energy weak solutions, without further assumptions on their structure. However, the (scaled) uniform bounds in \eqref{eq:B1_1} are not sufficient to pass to the limit in \eqref{eq:QHD_rs_intro}. Indeed, while the inertial term can be controlled by the scaled energy dissipation, the weak limit of the the quadratic term $(\d_x\sqrt{\rho_\tau})^2$ (see \eqref{eq:bohm} above) may be affected by strong oscillations and concentration phenomena, which cannot be controlled by the sole energy bounds.

Therefore, in order to avoid these pathological behaviors, we exploit the compactness framework developed in the authors' previous work \cite{AMZ1} and introduce the notion of weak solutions with bounded generalised chemical potential, shortened as \emph{GCP solutions}, that are thoroughly described in Definition \ref{def:GCPsln} below. The chemical potential is formally defined as
\begin{equation}\label{eq:chem}
\mu=-\frac{\d_x^2\sqrt\rho}{2\sqrt\rho}+\frac12\frac{J^2}{\rho^2}+f'(\rho)+V
\end{equation}
and it can be formally interpreted as the first variation of the total energy functional with respect to the mass density,
\[
\mu=\frac{\delta E}{\delta \rho}.
\]
Indeed, by noticing that $J=\frac{\delta E}{\delta v}$, then equations \eqref{eq:QHD} can be formally written as the following system (Hamilton system with a dissipative term formally given by $v=\frac{J}{\rho}$) associated to the energy functional
\begin{equation}\label{eq:Ham_form}
\left\{\begin{aligned}
&\d_t\rho=-\d_x \frac{\delta E}{\delta v}\\
&\d_t v=-\d_x \frac{\delta E}{\delta \rho}-\frac{1}{\tau}v
=-\d_x\mu-\frac1\tau v.
\end{aligned}\right.
\end{equation}
By using the objects introduced above, we can define the following functional
\begin{equation}\label{eq:higher}
I(t)=\int_\T \frac{\rho}{2}(\mu^2+\sigma^2)dx,
\end{equation}
where 
\begin{equation}\label{eq:sigma}
\sigma=\d_t\log\sqrt\rho=-\frac{\d_xJ}{2\rho}.
\end{equation} 
$I(t)$ somehow may be interpreted as a higher order functional for \eqref{eq:QHD}. In \cite{AMZ1}, it is shown that $I(t)$ remains uniformly bounded over compact time intervals, for the Hamiltonian evolution. Moreover, $I(t)$ also identifies a compactness class for weak solutions to QHD.
Following the terminology used in \cite{AMZ1}, we denote by GCP solutions, the family of weak solutions with finite energy, see \eqref{eq:en}, and functional $I(t)$, defined in \eqref{eq:higher}, see also Definition \ref{def:GCPsln}.
Unlike \cite{AMZ1} where vacuum regions are allowed, 
our analysis requires to consider solutions whose mass density remains uniformly bounded away from vacuum.
Indeed, by using \eqref{eq:QHD}, the time derivative of $I(t)$ is formally given by
\begin{equation}\label{eq:dtI_intro}
\frac{d}{dt} I(t)+\frac{1}{\tau}\int_T\rho \sigma^2 dx=\int_\T \mu\d_tp(\rho)dx+\int_\T\rho\mu \d_tVdx-\frac{1}{\tau}\int_\T\rho v^2\mu dx.
\end{equation}
In particular, the last term in the right hand side of \eqref{eq:dtI_intro} appears due to the collision term. The chemical potential may experience strong density fluctuations close to vacuum,  and possibly generate loss of integrability properties. Therefore in this paper we restrict to strictly positive densities. The rigorous computation of $\frac{d}{dt} I(t)$ will be given in Proposition \ref{prop:3.2}.

\subsection{Main result: global well-posedness}

The first result of this paper regards the global well-posedness of GCP weak solutions with strictly positive density. For a more clear definition of the GCP solutions, we refer to Definition \ref{def:GCPsln}.

\begin{thm}[Global well-posedness of GCP weak solutions]\label{thm:glob2}
Let us consider a finite energy initial datum $(\rho_0, J_0)$ satisfying the following conditions.
\begin{itemize}
\item Let us denote by $M_0=\int_\T\rho_0dx<\infty$ the initial total mass, and by $v_0=J_0/\rho_0$ the initial velocity. We assume they satisfy 
\begin{equation}\label{eq:cond_1}
\int_\T \mathcal{C}(x)dx=M_0,\quad \int_\T v_0 dx=0.
\end{equation}

\item There exists a $0<\delta\leq M_0$ such that the initial energy satisfies
\begin{equation}\label{eq:ini_en_intro}
E_0=\int_\T \frac12(\d_x\sqrt\rho_0)^2+\frac12\rho_0 v_0^2+f(\rho_0)+\frac12(\d_x V_0)^2dx\leq \frac12\left(M_0^\frac12-\delta M_0^{-\frac12}\right)^2,
\end{equation}
where $V_0$ is given by $-\d_x^2 V_0=\rho_0-\mathcal{C}(x)$.
\item The initial data satisfy the bounds
\begin{equation}\label{eq:C2_intro}
\|\sqrt{\rho_0}v_0^2-\d_x^2\sqrt{\rho_0}\|_{L^2_x}
+\|\frac{\d_xJ_0}{\sqrt{\rho_0}}\|_{L^2_x}\leq I_0<\infty.
\end{equation}
\end{itemize}
Then, there exists a unique global in time GCP solution $(\rho,J)$ to \eqref{eq:QHD}, such that for any $0\leq t<\infty$
\begin{itemize}
\item the density remains strictly positive with lower bound $\inf_{t,x}\rho\geq \delta$;
\item the total energy $E(t)$ satisfies the energy balance law, namely for every $t>0$,  
\begin{equation}\label{eq:en_disp_intro}
E(t)+\frac{1}{\tau}\int_0^t\int_\T \rho v^2 dxds=E_0;
\end{equation}
\item the higher order functional $I(t)$ satisfies the bounds
\begin{equation}\label{eq:bd_I}
I(t)+\frac{1}{\tau}\int_0^t\int_\T [(\d_t\sqrt\rho)^2+\rho v^4 ]dxds\leq C(M_0,E_0,I_0,\delta,\tau t)
\end{equation}
where $C(\dots,\tau t)$ grows at most at the rate $e^{\tau t}$.
\end{itemize}
\end{thm}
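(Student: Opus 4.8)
The plan is to build the solution through its Madelung reformulation and then turn the three bounds in the statement into a priori estimates that are propagated in time precisely because of the threshold assumption \eqref{eq:ini_en_intro}.

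\textbf{Step 1: reformulation and local theory.} I would solve \eqref{eq:QHD} in the wave function picture. Since, by Step 2 below, \eqref{eq:ini_en_intro} forces $\rho_0$ to be strictly positive, the polar factorization/wave function lifting of \cite{AM1,AM2,AMZ1} identifies GCP solutions (Definition \ref{def:GCPsln}) bounded away from vacuum with $H^2$ solutions $\psi=\sqrt\rho\,e^{iS}$, $\rho=|\psi|^2$, $J=\IM(\bar\psi\d_x\psi)$, $v=\d_x S$, of the nonlinear Schr\"odinger--Langevin equation \eqref{eq:NLS}, whose dissipative (Kostin) term, of the form $\tfrac1\tau(\arg\psi)\psi$, is the wave-function counterpart of $-\tfrac1\tau J$; the phase is globally single-valued because the condition $\int_\T v_0\,dx=0$ is preserved by \eqref{eq:QHD}, and the residual constant ambiguity is fixed by normalising $\int_\T\arg\psi\,dx=0$. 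Using $\d_x^2(\sqrt{\rho_0}e^{iS_0})=(\d_x^2\sqrt{\rho_0}-\sqrt{\rho_0}v_0^2)e^{iS_0}+i\tfrac{\d_xJ_0}{\sqrt{\rho_0}}e^{iS_0}$ and the lower bound on $\rho_0$, assumption \eqref{eq:C2_intro} is — up to the bounded lower order contributions $f'(\rho_0)+V_0$ in \eqref{eq:chem} and the identity $\d_xJ_0=-2\rho_0\sigma_0$ — equivalent to $I(0)<\infty$ and gives $\psi_0\in H^2(\T)$ with $|\psi_0|\ge\sqrt\delta$. Local-in-time well-posedness of \eqref{eq:NLS} in $H^2$ for such data then follows by a contraction argument: on $\{\sqrt\delta\le|\psi|\le R\}$ both $f'(|\psi|^2)\psi$ and the Kostin term are smooth nonlinearities, and the Poisson coupling is handled by splitting $V=V_{\mathcal C}+\tilde V$ with $-\d_x^2V_{\mathcal C}=M_0-\mathcal C$ a fixed corrector (using only $\mathcal C\in L^1$) and $-\d_x^2\tilde V=\rho-M_0$ regularising. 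This yields a unique maximal $H^2$ solution on $[0,T_{\max})$.

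\textbf{Step 2: energy balance and lower bound on the density.} Pairing \eqref{eq:NLS} with $\overline{\d_t\psi}$ and taking real parts, the Hamiltonian part is conserved and, since $\RE(\overline{\d_t\psi}\,\psi)=\tfrac12\d_t\rho$ and $\d_t\rho=-\d_xJ$, only the friction survives: $\tfrac{d}{dt}E(t)=-\tfrac1\tau\int_\T(\arg\psi)\,\d_t\rho\,dx=-\tfrac1\tau\int_\T v\,J\,dx=-\tfrac1\tau\int_\T\rho v^2\,dx$, the Poisson contributions cancelling as usual. Integrating gives \eqref{eq:en_disp_intro}, hence $E(t)\le E_0$ and $\tfrac1\tau\int_0^t\int_\T\rho v^2\le E_0$ for all $t\ge0$. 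Since every term in \eqref{eq:en} but $\tfrac12(\d_x\sqrt\rho)^2$ is nonnegative, $\|\d_x\sqrt\rho(t)\|_{L^2}^2\le 2E_0$; on $\T=\R/\Z$ one has $|\sqrt{\rho(x)}-\sqrt{\rho(y)}|\le\mathrm{dist}(x,y)^{1/2}\|\d_x\sqrt\rho\|_{L^2}$ by Cauchy--Schwarz along the shorter arc, and evaluating at a minimiser $x_0$ of $\rho$ and integrating in $y$ (using $\int_\T\mathrm{dist}(x_0,y)\,dy=\tfrac14$ and $\int_\T\rho=M_0$) yields $\sqrt{M_0}\le\sqrt{\inf_x\rho}+\tfrac12\|\d_x\sqrt\rho\|_{L^2}$. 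Therefore $\inf_x\rho(t)\ge(\sqrt{M_0}-\tfrac12\|\d_x\sqrt\rho(t)\|_{L^2})^2\ge M_0-\sqrt{2M_0E_0}\ge\delta$, the last inequality being exactly \eqref{eq:ini_en_intro}, which (as $\delta\le M_0$) is equivalent to $\delta\le M_0-\sqrt{2M_0E_0}$. In particular the density never reaches vacuum, so $T_{\max}<\infty$ can only come from a blow-up of $\|\psi\|_{H^2}$, equivalently of $I(t)$.

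\textbf{Step 3: control of $I(t)$, globalisation, uniqueness.} By Proposition \ref{prop:3.2} the identity \eqref{eq:dtI_intro} holds for GCP solutions; its left-hand side already carries the dissipation $\tfrac1\tau\int_\T\rho\sigma^2$, and expanding the collisional term $-\tfrac1\tau\int_\T\rho v^2\mu$ via \eqref{eq:chem} together with one integration by parts produces the additional nonnegative terms $\tfrac1{2\tau}\int_\T\rho v^4$ and $\tfrac1{2\tau}\int_\T v^2(\d_x\sqrt\rho)^2$, i.e. the dissipative quantities on the left of \eqref{eq:bd_I}. The remaining right-hand side is estimated by the one-dimensional Gagliardo--Nirenberg and Sobolev inequalities and the bounds of Steps 1--2: $E(t)\le E_0$, $\delta\le\rho(t)\le C(M_0,E_0)$, $\|V(t)\|_{L^\infty}+\|f'(\rho(t))\|_{L^\infty}\le C$, $\|\d_x^2\sqrt\rho\|_{L^2}^2\lesssim I+C$ (read off from $\int_\T\rho\mu^2$), $\|v(t)\|_{L^\infty}^2\lesssim 1+I(t)^{1/2}$ (from $\sqrt\rho\sigma,\d_x\sqrt\rho\in L^2$ and $\d_xv=-2\sigma-2v\,\d_x\log\sqrt\rho$), and $\|\d_tV\|_{L^2}^2\lesssim M_0\int_\T\rho v^2$ (from $\d_tV=-(-\d_x^2)^{-1}\d_xJ$ and $\|J\|_{L^1}^2\le M_0\int_\T\rho v^2$, $L^1(\T)\hookrightarrow H^{-1}(\T)$). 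The key point is that every term then appears as a quantity controlled by $C(M_0,E_0,\delta)(1+I)^{1/2}$ times either a dissipative density (one of $\rho\sigma^2$, $\rho v^4$, $v^2(\d_x\sqrt\rho)^2$) or $\rho v^2$; applying Young's inequality \emph{with weight $\tau$} converts it into $C(M_0,E_0,\delta)\,\tau(1+I)$ plus $\tfrac1\tau$ times a dissipative density that is absorbed into the left-hand side of \eqref{eq:bd_I}, or plus $\tfrac1\tau\int_\T\rho v^2$ whose time integral is $\le E_0$ by Step 2. This produces $\tfrac{d}{dt}I(t)\le C(M_0,E_0,\delta)\,\tau\,I(t)+g(t)$ with $\int_0^t g\le C(M_0,E_0,I_0,\delta)$, and Gr\"onwall gives \eqref{eq:bd_I} with a constant growing at most like $e^{\tau t}$. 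Since $\|\psi(t)\|_{H^2}^2$ is now bounded in terms of $M_0,E_0,I(t),\delta$, the maximal $H^2$ solution is global; the lifting returns the global GCP solution $(\rho,J)$, and an $L^2$ Gr\"onwall estimate for the difference of two such solutions — Lipschitz thanks to the uniform lower bound on the density and the $H^2$ bounds — gives uniqueness.

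\textbf{Main obstacle.} The delicate part is the $\tau$-scaling in Step 3. The collisional term $-\tfrac1\tau\int_\T\rho v^2\mu\,dx$ carries both an explicit $\tau^{-1}$ and, through $\mu$, two spatial derivatives of $\sqrt\rho$; one must at once extract from it the quartic and gradient dissipation on the left of \eqref{eq:bd_I}, absorb the genuinely $\tau^{-1}$-singular remainders into that dissipation or into the energy dissipation $\tfrac1\tau\int_\T\rho v^2$, and choose the Young weights so that no term of size $\tfrac1\tau I$ survives — the exponential growth of $I(t)$ being then driven only by the harmless, $\tau$-small terms $\tau I$. Getting this last point right is exactly what makes the bound uniform on the rescaled time scale $t'=\tau t$, which is what the relaxation limit carried out in the remainder of the paper requires.
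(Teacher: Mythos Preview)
Your approach is the same as the paper's: lift the initial data to $\psi_0\in H^2$ via polar factorisation, run the Schr\"odinger--Langevin equation locally, use the energy identity to pin the density above $\delta$, and close by the $I(t)$ differential inequality from Proposition~\ref{prop:3.2}. Two points in your sketch are not quite right, though neither is fatal.

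First, the lower-bound argument in Step~2 does not close as written. Integrating the Morrey inequality for $\sqrt\rho$ against $dy$ gives control of $\int_\T\sqrt\rho$, not of $\sqrt{M_0}$, and Jensen goes the wrong way ($\int\sqrt\rho\le\sqrt{M_0}$). The paper works with $\rho$ instead of $\sqrt\rho$: on the unit torus $\|\rho-M_0\|_{L^\infty}\le\tfrac12\|\d_x\rho\|_{L^1}\le\sqrt{M_0}\,\|\d_x\psi\|_{L^2}\le\sqrt{2M_0E_0}$, which is exactly the threshold in \eqref{eq:ini_en_intro}.

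Second, in Step~3 the term $\tfrac1{2\tau}\int_\T v^2(\d_x\sqrt\rho)^2$ is \emph{not} a dissipation. After one integration by parts the Bohm contribution to $-\tfrac1\tau\int\rho v^2\mu$ indeed produces $-\tfrac1{2\tau}\int v^2(\d_x\sqrt\rho)^2$, but the cross term $-\tfrac1\tau\int\sqrt\rho\,v\,\d_xv\,\d_x\sqrt\rho$, once you substitute $\d_xv=-2\sigma-2v\,\d_x\log\sqrt\rho$, returns $+\tfrac2\tau\int v^2(\d_x\sqrt\rho)^2$, so the net sign is bad. In the paper (Proposition~\ref{prop:bdI}) this term is estimated by $\tfrac{\|\d_x\sqrt\rho\|_{L^\infty}^2}{\delta\tau}\int\rho v^2\le C(M_0,E_0,\delta)(1+I^{1/2})\,\tfrac1\tau\int\rho v^2$ and absorbed via the energy dissipation $\tfrac1\tau\int_0^t\!\int\rho v^2\le E_0$ inside Gronwall --- which is precisely the mechanism you describe for ``the remaining right-hand side'', so your framework still handles it; you just should not list it among the good terms.
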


\begin{rem}
The condition \eqref{eq:ini_en_intro} is introduced to obtain the lower bound $\delta$ of density $\rho_0$ and $\rho$ by using  \eqref{eq:en_disp_intro} and Poincar\'e inequality.
The conditions \eqref{eq:cond_1} in Theorem \ref{thm:glob2} are preserved along the evolution \eqref{eq:QHD}, so they guarantee the solvability of the Poisson equation in $V$ and the periodicity of the phase function $S$ respectively, see also Definition \ref{def:phase} below. Last, the condition \eqref{eq:C2_intro} is equivalent to the boundedness of the functional $I(t)$ at initial time $t=0$, and conversely the bound \eqref{eq:bd_I} implies \eqref{eq:C2_intro} holds upto any finite time $t>0$.

\end{rem}

Our strategy to prove Theorem \ref{thm:glob2} is motivated by noticing there is a formal equivalence between system \eqref{eq:QHD} and the following Schr\"odinger-Langevin type equation 
\begin{equation}\label{eq:NLS}
i\d_t\psi+\frac12\d_x^2\psi=f'(|\psi|^2)\psi+\frac{1}{\tau}S\psi+V\psi,
\end{equation}
see also \cite{JMR}.
Here the potential $S$ is given through the Madelung transformations \cite{Mad}, $\psi=|\psi|e^{iS}$ as the phase function associated to $\psi$, and it can be formally defined as $S=\frac{1}{i}\log\left(\frac{\psi}{|\psi|}\right)$.
Equation \eqref{eq:NLS} was introduced, in the case $f=0$, by Kostin \cite{Kos} (see also \cite{Y} and \cite{Na}) to derive a Schr\"odinger equation which was taking into account the interaction with Brownian particles. 
The usual difficulties entailed by the definition of the complex logarithm prevents us to study equation \eqref{eq:NLS} with standard methods.
\newline
The hydrodynamic system \eqref{eq:QHD} can be formally recast from \eqref{eq:NLS} by defining the associated hydrodynamic variables  $(\rho, J)=(|\psi|^2, \IM(\bar\psi\d_x\psi))$ and by computing the related balance laws, see for instance \cite{AMZ1}.

To prove Theorem \ref{thm:glob2}, we first show the the well-posedness of the Cauchy problem for Schr\"odinger-Langevin equation \eqref{eq:NLS}. We rigorously establish the equivalence between QHD system \eqref{eq:QHD} and equation \eqref{eq:NLS} for solutions with strictly positive density, by the methods of wave function lifting \cite{AMZ1} (see also \cite{HaoMinE}) and polar factorization \cite{AM1} developed in the authors' previous works. As an independent result, we also state the well-posedness of equation \eqref{eq:NLS} as the following theorem.

\begin{thm}\label{thm:NLS}
Let us assume that the initial datum $\psi_0\in H^2_x(\T)$ for \eqref{eq:NLS} satisfies 
\begin{equation}\label{eq:iniE_w}
    E(\psi_0)=\int_\T \frac12|\d_x\psi_0|^2+f(|\psi_0|^2)+\frac12(\d_x V_0)^2dx \leq \frac12(M_0^\frac12-\delta M_0^{-\frac12})^2,
\end{equation}
where $M_0=\|\psi_0\|_{L^2}^2=\int_\T \mathcal{C}(x)dx$ and $V_0$ satisfies $-\d_x^2 V_0=|\psi_0|^2-\mathcal{C}(x)$. Then the Cauchy problem associated to \eqref{eq:NLS} with initial data $\psi_0$ has a unique global solution $\psi\in \mathcal{C}([0,\infty);H^2_x(\T))$. Moreover, $\inf_{t,x}|\psi|\geq \delta^\frac12$ and the hydrodynamic functions associated to $\psi$, given by
\[
\rho=|\psi|^2,\quad J=\IM(\bar\psi\d_x\psi)=\rho v,
\]
satisfy the bounds \eqref{eq:en_disp_intro} (with $E_0=E(\psi_0)$) and \eqref{eq:bd_I} on any finite time interval $[0,T]$, $T>0$.
\end{thm}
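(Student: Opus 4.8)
I would obtain $\psi$ by solving a truncated version of \eqref{eq:NLS} via a contraction mapping argument, then derive uniform a priori bounds (mass, dissipated energy, a positive lower bound for $|\psi|$, and the higher functional $I(t)$), and finally remove the truncation, globalize, and transfer the estimates to the hydrodynamic variables through polar factorization. The one term in \eqref{eq:NLS} that is problematic is the Langevin term $\frac1\tau S\psi$: the phase $S=\frac1i\log(\psi/|\psi|)$ is neither locally Lipschitz near $\psi=0$ nor single-valued on $\T$ unless its winding number vanishes (equivalently $\int_\T v_0\,dx=0$, as in \eqref{eq:cond_1}), a property that is preserved by the evolution. Everything hinges on propagating the lower bound $|\psi|\geq\delta^{1/2}$ implied by the energy condition \eqref{eq:iniE_w}.

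\textbf{Local solutions of a truncated equation.} For $\eta>0$, replace $\frac1\tau S\psi$ and $f'(|\psi|^2)\psi$ (including their large-$|\psi|$ behaviour, if the $\gamma$-law makes this necessary) by modifications that coincide with them when $\rho=|\psi|^2\in[\eta,R]$ and that are globally Lipschitz as maps on $H^2_x(\T)$; here one uses that $H^2_x(\T)$ is a Banach algebra in one space dimension and that $V$ depends on $\psi$ only through $\rho$, locally Lipschitz, via the solution operator of the Poisson equation. Using Duhamel's formula for the unitary group $e^{it\d_x^2/2}$ on $H^2_x(\T)$ and running a contraction in a small ball around $e^{it\d_x^2/2}\psi_0$ in $C([0,T];H^2_x(\T))$ — on which $|\psi|$ remains bounded below (since $|\psi_0|$ is, by \eqref{eq:iniE_w} and the Poincar\'e inequality) and the phase $S$ remains single-valued by homotopy from $\psi_0$ — yields a unique local solution of the truncated problem; uniqueness at this level is standard.

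\textbf{Uniform a priori bounds.} Along this solution I would prove, with constants independent of $\eta$: (i) mass conservation $\|\psi(t)\|_{L^2}^2=M_0$, because the potentials $f'(|\psi|^2)$, $S$, $V$ are real; (ii) the energy balance $E(\psi)(t)+\frac1\tau\int_0^t\int_\T\rho v^2\,dx\,ds=E(\psi_0)$, legitimate at $H^2_x$ regularity once $\int|\d_x\psi|^2$ is rewritten as $-\RE\int\psi\,\d_x^2\bar\psi$ to avoid third derivatives, hence $E(\psi)(t)\leq E(\psi_0)$; (iii) the lower bound $\inf_x|\psi(t)|\geq\delta^{1/2}$, from (ii), mass conservation, assumption \eqref{eq:iniE_w}, and the sharp $L^\infty$–Poincar\'e inequality on $\T$ applied to $\rho$; (iv) the bound \eqref{eq:bd_I} on $I(t)$, defined in \eqref{eq:higher}–\eqref{eq:sigma}, via the rigorous form of identity \eqref{eq:dtI_intro} (Proposition \ref{prop:3.2}) and Gr\"onwall's lemma. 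In (iv), $\rho\geq\delta$ controls the subprincipal pieces of $\sqrt\rho\,\mu$ (see \eqref{eq:chem}), and Gagliardo–Nirenberg interpolation together with the energy bound controls $\|\sqrt\rho\,v^2\|_{L^2}$ and $\|v\|_{L^\infty}$ by powers of $\sqrt{I(t)}$; the only term absent in the purely Hamiltonian case, $-\frac1\tau\int_\T\rho v^2\mu\,dx$, is split through \eqref{eq:chem} into the favourable dissipation $-\frac1{2\tau}\int_\T\rho v^4\,dx$ (kept on the left, matching \eqref{eq:bd_I}) plus cross terms absorbed as above, which accounts for the exponential-in-$\tau t$ growth. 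Condition \eqref{eq:C2_intro}, equivalent to $I(0)<\infty$, supplies the datum for Gr\"onwall.

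\textbf{Globalization, equivalence with QHD, and the main obstacle.} By (iii) the truncation is inactive once $\eta$ is small enough, so the solution actually solves \eqref{eq:NLS}; using $\rho\geq\delta$ one bounds $\|\psi(t)\|_{H^2_x}$ in terms of $\|\d_x^2\sqrt\rho\|_{L^2}$, $\|\sqrt\rho\,v^2\|_{L^2}$, $\|\d_xJ/\sqrt\rho\|_{L^2}$ (all controlled by $I(t)$) and of $M_0$, $E(\psi_0)$ ($\sqrt\rho$ and $V$ being bounded above), so the continuation criterion gives a global solution with $\inf_{t,x}|\psi|\geq\delta^{1/2}$; uniqueness for \eqref{eq:NLS} follows because any other solution satisfies the same energy balance, hence the same lower bound, hence also solves the truncated equation. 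Finally, since $|\psi|\geq\delta^{1/2}>0$ we have $\sqrt\rho=|\psi|\in C([0,\infty);H^2_x(\T))$, and the polar factorization and wave-function-lifting tools of \cite{AM1,AMZ1} show that $(\rho,J)=(|\psi|^2,\IM(\bar\psi\d_x\psi))$ is a GCP solution of \eqref{eq:QHD} for which (ii) and (iv) are exactly \eqref{eq:en_disp_intro} and \eqref{eq:bd_I}. The principal obstacle is the term $\frac1\tau S\psi$, which is tamed only by the propagated positivity of $|\psi|$ (this simultaneously deactivates the truncation, makes the Madelung transform rigorous, and closes the $H^2$ estimate); the secondary one is closing the Gr\"onwall inequality for $I(t)$ with the collision term, where the sign of $\frac1\tau\int_\T\rho v^4$ is essential.
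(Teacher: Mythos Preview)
Your proposal is correct and follows the same overall architecture as the paper (local existence, then the a priori bounds of Propositions \ref{prop:3.2} and \ref{prop:bdI}, then globalization via the $H^2$ control coming from $I(t)$), and your treatment of the a priori bounds and of the collision term $-\frac1\tau\int_\T\rho v^2\mu\,dx$ matches the paper essentially line by line.

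The one place where you diverge is the local existence step. You run a direct contraction on the full nonlinear Duhamel map, relying on a truncation of the nonlinearity and on the complex-logarithm representation of $S$ being Lipschitz on the small $C_tH^2_x$ ball around $e^{it\d_x^2/2}\psi_0$ (where $|\psi|$ stays bounded below). The paper instead runs a Picard iteration through \emph{linear} Schr\"odinger equations \eqref{eq:QLS_n}, with the potentials $f'(|\psi_{n-1}|^2)$, $V_{n-1}$, and crucially $S_{n-1}$ taken from the previous step; the phase $S_{n-1}$ is not obtained from $\log(\psi_{n-1}/|\psi_{n-1}|)$ but from the hydrodynamic integral formula \eqref{eq:defS_n} built out of $v_{n-1}$ and $\mu_{n-1}$, which is the iterated analogue of Definition \ref{def:phase}. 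Your route is quicker and more standard (and the truncation is in fact superfluous once you work in that small ball), while the paper's route has the advantage of keeping the phase defined purely through hydrodynamic quantities, consistent with the wave-function-lifting framework and with Remark \ref{rem:equivS}; this is what later lets the paper pass cleanly between \eqref{eq:NLS} and the GCP formulation in the proof of Theorem \ref{thm:glob2}.
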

\begin{rem}
Let us remark that here and hereafter we use the same notation both for the total energy associated to \eqref{eq:QHD} and to \eqref{eq:NLS}, see \eqref{eq:en} and \eqref{eq:iniE_w}, respectively. We stress that this does not provide any ambiguity, as the two functionals coincide for Schr\"odinger-generated hydrodynamic data, namely defined through $(\rho, J)=(|\psi|^2, \IM(\bar\psi\d_x\psi))$.
\end{rem}

\subsection{Main result: relaxation-time limit}

We now turn our attention to the relaxation-time limit $\tau\to 0$. Recall the scaling \eqref{eq:rs_intro},
\begin{equation}\label{eq:rs_intro_1}
t'=\tau t,\quad (\rho_\tau,J_\tau)(t',x)=\left(\rho,\frac{1}{\tau}J\right)\left(\frac{t'}{\tau},x\right),
\end{equation}
then the QHD system \eqref{eq:QHD} can be written as it appears in \eqref{eq:QHD_rs_intro}, which formally converges towards the QDD equation \eqref{eq:qdde} as $\tau\to 0$.

The first rigorous justification was given by \cite{JLM} for small and smooth perturbations around a constant state, and later extended to the bipolar case, namely the system of electrons and ions, by \cite{LZZ}. In \cite{LT}, the relaxation-time limit is considered for finite energy weak solutions, however they need to assume the initial data to be well-prepared (see the next paragraph) and the solutions of the limiting equations \eqref{eq:qdde} to be sufficiently smooth. On the other hand, the analysis of relaxation-time limits is also performed in the classical hydrodynamic equations, see for example \cite{HL,HP,JP}.

Let us notice that, under the scaling \eqref{eq:rs_intro_1}, the solution $(\rho_\tau, J_\tau)$ to \eqref{eq:QHD_rs_intro} satisfies the following energy inequality
\begin{equation}\label{eq:B1_1_rs}
E_\tau(t')+\int_0^{t'}\frac{(J_\tau)^2}{\rho_\tau}\,dxds\leq C E_\tau(0),
\end{equation}
where now the rescaled energy reads
\begin{equation}\label{eq:en_rs_intro}
E_\tau(t')=\int_{\mathbb T}\frac12(\d_x\sqrt{\rho_\tau})^2+\frac{\tau^2}{2}\frac{(J_\tau)^2}{\rho_\tau}+f(\rho_\tau)+
\frac12(\d_xV_\tau)^2\,dx.
\end{equation}
Notice that \eqref{eq:en_rs_intro} and \eqref{eq:B1_1_rs} are the rescaled versions of \eqref{eq:en} and \eqref{eq:en_disp_intro}, respectively.
\newline
As we already mentioned, the bounds implied by \eqref{eq:B1_1_rs} are not sufficient to pass to the limit in \eqref{eq:QHD_rs_intro}. Indeed, while the uniform $L^2_{t', x}$ estimate on $J_\tau/\sqrt{\rho_\tau}$ allow us to show the convergence to zero of the inertial term, there are no sufficient bounds on $\d_x\sqrt{\rho_\tau}$ to analyze the weak limit of the quadratic term $(\d_x\sqrt{\rho_\tau})^2$. For this reason, we study the relaxation-time limit in the framework of GCP solutions, see Definition \ref{def:GCPsln} for more details, whose existence is provided by Theorem \ref{thm:glob2}. 
Analogously to \eqref{eq:B1_1_rs}, the rescaled version of \eqref{eq:bd_I} reads 
\begin{equation}\label{eq:bd_I_rs}
I_\tau(t')+\int_0^{t'}\int_{\mathbb T}(\d_{t'}\sqrt{\rho_\tau})^2+\rho_\tau v_\tau^4\,dxds\leq 
C(M_0, E_0, I_0, \delta, t'),
\end{equation}
where now $I_\tau$ is defined by
\begin{equation}\label{eq:higher_rs_intro}
I_\tau(t')=\int_{\mathbb T}\frac{\tau^2}{2}\rho_\tau\left(\mu_\tau^2+\sigma_\tau^2\right)\,dx,
\end{equation}
and the scaled chemical potential is given by 
\begin{equation}\label{eq:chem_rs_intro}
\mu_\tau=\frac{1}{\tau}\left(-\frac{\d_x^2\sqrt\rho_\tau}{2\sqrt\rho_\tau}+\frac{\tau^2}{2}v_\tau^2+f'(\rho_\tau)+V_\tau\right).
\end{equation}
For a more detailed discussion about the rescaled quantities, we refer to Section \ref{sect:rs}.

One important question that should be noticed in the relaxation-time limit is the phenomenon of initial layer. For the rescaled QHD system \eqref{eq:QHD_rs_intro}, the initial data consists of two components, namely the initial density $\rho_0$ and the initial momentum density $J_{\tau,0}$. On the other hand, the initial data for the QDD equation \eqref{eq:qdde} only contains the initial density $\rho_0$, and the limiting momentum density is required to satisfy the constitutive relation
\begin{equation}\label{eq:barJ}
\bar{J}=\frac{1}{2}\bar\rho\d_x\left(\frac{\d_x^2\sqrt{\bar\rho}}{\sqrt{\bar\rho}}\right)-\d_x p(\bar\rho)-\bar\rho\d_x\bar{V},
\end{equation}
which can be formally deduced by passing to the limit $\tau\to 0$ in the second equation of \eqref{eq:QHD_rs_intro}. 
The initial data for system \eqref{eq:QHD_rs_intro} are  well-prepared if $J_{\tau, 0}=\bar J_0$, where $\bar J_0$ is defined by $\bar\rho=\rho_0$ in \eqref{eq:barJ}.
In our analysis we do not assume the initial data to be well-prepared, therefore an initial layer will naturally appear. As a consequence, the convergence of the momentum density will be only in $\{t>0\}$.
\newline
Another novelty with respect to the existing literature is that we are able to provide an explicit rate of convergence with respect to $\tau$.

\begin{thm}[Relaxation-time limit]\label{thm:rlxlimit}
Let $\{(\rho_{\tau},J_{\tau})\}_{\tau>0}$ be a sequence of GCP solutions of the rescaled equation \eqref{eq:QHD_rs_intro}, such that
\begin{itemize}
\item $\inf_{t’,x}\rho_{\tau}\geq \delta$ for a uniform constant $\delta>0$,
\item $(\rho_{\tau},J_{\tau})$ satisfy the bounds \eqref{eq:B1_1_rs} and \eqref{eq:bd_I_rs}.
\end{itemize}
Then the relaxation-time limit holds, namely $\{\rho_{\tau}\}$ converges to a limiting density $\bar\rho$ in the form
\[
\|\rho_\tau-\bar\rho\|_{L^\infty_{t'}L^2_x}+\|\sqrt\rho_\tau (\d_x^2\log\sqrt\rho_\tau-\d_x^2\log\sqrt{\bar\rho})\|_{L^2_{t',x}}\leq C(M_0,E_0,I_0,\delta,T)\tau,
\]
for any $0<T<\infty$. Moreover, $\bar\rho$ is a weak solution of \eqref{eq:qdde} in the sense of Definition \ref{def:qdde_ws}. 

Finally, if the internal energy $f(s)$ is convex, then $\bar\rho$ is a dissipative solution in the sense of Definition \ref{def:qdde_ws}.

\end{thm}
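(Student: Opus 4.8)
\emph{Overall plan.} I would prove the qualitative statements (convergence, identification of the limit, dissipative inequality) by compactness, and the quantitative rate by a quantitative estimate obtained by testing the density equation against the $\dot H^{-1}$ Riesz potential of $\rho_\tau-\bar\rho$ and integrating in time; the model computation to keep in mind is the damped ODE $\tau^2\ddot x+\dot x+kx=0$, $x(0)=0$, $\dot x(0)=v_0/\tau$: multiplying by $x$ and integrating gives $\tfrac12x(t)^2+k\int_0^tx^2=\tau^2\int_0^t\dot x^2-\tau^2x(t)\dot x(t)$, and since the dissipation $\int_0^t\dot x^2\le\tfrac12 v_0^2$ is bounded \emph{uniformly in} $\tau$, one gets $|x(t)|=O(\tau)$ even though the kinetic energy is only $O(1)$ at $t=0$. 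The whole point will be to realize this mechanism at the level of \eqref{eq:QHD_rs_intro}.

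\emph{Step 1: uniform bounds and compactness.} From \eqref{eq:B1_1_rs} and $\rho_\tau\ge\delta$ one gets $\sqrt{\rho_\tau}$ bounded in $L^\infty_{t'}H^1_x$ (hence $\rho_\tau\in[\delta,C]$), $V_\tau$ bounded in $L^\infty_{t'}H^1_x$, $J_\tau$ bounded in $L^2_{t',x}$ (so $\tau^2J_\tau\to0$ in $L^2_{t',x}$ and $\tau^2\|J_\tau(t')\|_{L^2_x}^2\le CE_\tau(t')\le CE_\tau(0)$ uniformly); from \eqref{eq:bd_I_rs}, $\sqrt{\rho_\tau}v_\tau$, $\sqrt{\rho_\tau}v_\tau^2$ and $\d_{t'}\sqrt{\rho_\tau}$ are bounded in $L^2_{t',x}$. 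Solving \eqref{eq:chem_rs_intro} for the Bohm term, $\tfrac12\d_x^2\sqrt{\rho_\tau}=-\tau\sqrt{\rho_\tau}\mu_\tau+\tfrac{\tau^2}{2}\sqrt{\rho_\tau}v_\tau^2+\sqrt{\rho_\tau}(f'(\rho_\tau)+V_\tau)$, and using $\rho_\tau\in[\delta,C]$ with the bound on $I_\tau$ shows $\d_x^2\sqrt{\rho_\tau}$ bounded in $L^2_{t',x}$, i.e.\ $\sqrt{\rho_\tau}$ bounded in $L^2_{t'}H^2_x$. By Aubin--Lions (using $\d_{t'}\sqrt{\rho_\tau}\in L^2_{t',x}$) one extracts a subsequence with $\sqrt{\rho_\tau}\to\sqrt{\bar\rho}$ strongly in $C_{t'}C_x\cap L^2_{t'}H^1_x$, hence $\rho_\tau\to\bar\rho$ uniformly (so $\bar\rho\ge\delta$), $\d_x\rho_\tau\to\d_x\bar\rho$ in $L^2_{t',x}$, $V_\tau\to\bar V$, and $\d_x^2\sqrt{\rho_\tau}\rightharpoonup\d_x^2\sqrt{\bar\rho}$ in $L^2_{t',x}$. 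Setting $\tilde\mu_\tau:=-\tfrac{\d_x^2\sqrt{\rho_\tau}}{2\sqrt{\rho_\tau}}+f'(\rho_\tau)+V_\tau$ and $\bar\mu:=-\tfrac{\d_x^2\sqrt{\bar\rho}}{2\sqrt{\bar\rho}}+f'(\bar\rho)+\bar V$, the above gives $\tilde\mu_\tau\rightharpoonup\bar\mu$ in $L^2_{t',x}$, and $\rho_\tau\tilde\mu_\tau\to\bar\rho\bar\mu$, $\tilde\mu_\tau\d_x\rho_\tau\to\bar\mu\d_x\bar\rho$ in $\mathcal D'$.

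\emph{Step 2: identification of the limit.} Using $\d_xp(\rho)=\rho\d_xf'(\rho)$ and \eqref{eq:bohm}, the momentum balance in \eqref{eq:QHD_rs_intro} reads $J_\tau=-\rho_\tau\d_x\tilde\mu_\tau-\tau^2\d_{t'}J_\tau-\tau^2\d_x(\rho_\tau v_\tau^2)$. Since $\tau^2J_\tau$ and $\tau^2\rho_\tau v_\tau^2$ tend to $0$ in $L^2_{t',x}$, and $\rho_\tau\d_x\tilde\mu_\tau=\d_x(\rho_\tau\tilde\mu_\tau)-\tilde\mu_\tau\d_x\rho_\tau\to\d_x(\bar\rho\bar\mu)-\bar\mu\d_x\bar\rho=\bar\rho\d_x\bar\mu$ in $\mathcal D'$, passing to the limit in $\d_{t'}\rho_\tau+\d_xJ_\tau=0$ yields $\d_{t'}\bar\rho=\d_x(\bar\rho\d_x\bar\mu)$, which after one integration by parts (to remove the third spatial derivative in \eqref{eq:barJ}) is the weak formulation of \eqref{eq:qdde} of Definition \ref{def:qdde_ws}. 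When $f$ is convex, taking the $\liminf$ in the rescaled energy identity $\mathcal E[\rho_\tau](t')+\int_0^{t'}\!\!\int_\T\rho_\tau v_\tau^2\le E_\tau(0)$ — using weak lower semicontinuity of the quantum and Poisson energies, strong convergence for $\int f(\rho_\tau)$, joint convexity of $(J,\rho)\mapsto J^2/\rho$ with $J_\tau\rightharpoonup\bar J$, $\rho_\tau\to\bar\rho$, and $\tfrac{\tau^2}{2}\int\rho_\tau v_\tau^2(t')\to0$ in $L^1_{t'}$ — gives the dissipation inequality of Definition \ref{def:qdde_ws} for $\bar\rho$.

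\emph{Step 3: the rate.} Eliminating $J_\tau$ between the continuity equation and the momentum balance, $w:=\rho_\tau-\bar\rho$ solves the second--order--in--time relation $\tau^2\d_{t'}^2\rho_\tau+\d_{t'}w=\d_x\big(\rho_\tau\d_x\tilde\mu_\tau-\bar\rho\d_x\bar\mu\big)+\tau^2\d_x^2(\rho_\tau v_\tau^2)$ with $w(0)=0$. Testing against $(-\d_x^2)^{-1}w$ and integrating over $[0,t']$: the term $\int_0^{t'}\langle\d_{t'}w,(-\d_x^2)^{-1}w\rangle=\tfrac12\|w(t')\|_{\dot H^{-1}}^2$; the term $\tau^2\int_0^{t'}\langle\d_{t'}^2\rho_\tau,(-\d_x^2)^{-1}w\rangle$, integrated by parts in time, produces the boundary term $\tau^2\langle J_\tau(t'),\d_x(-\d_x^2)^{-1}w(t')\rangle=O(\tau^2)+\tfrac14\|w(t')\|_{\dot H^{-1}}^2$ (using $\tau^2\|J_\tau(t')\|_{L^2_x}^2\le C$) plus $\tau^2\int_0^{t'}\langle J_\tau,(J_\tau-\bar J)-\langle J_\tau-\bar J\rangle\rangle=O(\tau^2)$ (using $\|J_\tau\|_{L^2_{t',x}},\|\bar J\|_{L^2_{t',x}}\le C$) — this is exactly the ODE mechanism above, with $\int_0^{t'}\!\!\int_\T\rho_\tau v_\tau^2\le E_\tau(0)$ playing the role of $\int\dot x^2$; likewise $\tau^2\int_0^{t'}\langle\d_x^2(\rho_\tau v_\tau^2),(-\d_x^2)^{-1}w\rangle=-\tau^2\int_0^{t'}\langle\rho_\tau v_\tau^2,w\rangle=O(\tau^2)$. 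The remaining term $\int_0^{t'}\langle\d_x(\rho_\tau\d_x\tilde\mu_\tau-\bar\rho\d_x\bar\mu),(-\d_x^2)^{-1}w\rangle$ must be shown to be $\le-c\int_0^{t'}\!\!\int_\T\rho_\tau|\d_x(\tilde\mu_\tau-\bar\mu)|^2+C\int_0^{t'}\|w\|^2_{\dot H^{-1}}$ up to $O(\tau^2)$, using the monotonicity of $\rho\mapsto\frac{\delta\mathcal E}{\delta\rho}$ (convexity of $\mathcal E[\rho]=\int\tfrac12(\d_x\sqrt\rho)^2+f(\rho)+\tfrac12(\d_xV)^2$) and the uniform regularity of $\bar\rho$; Gronwall (with $w(0)=0$) then gives $\|w\|_{L^\infty_{t'}\dot H^{-1}_x}^2+\int_0^T\!\!\int_\T\rho_\tau|\d_x(\tilde\mu_\tau-\bar\mu)|^2\lesssim\tau^2$. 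To upgrade to $L^2$ I would run, in parallel, the relative ``quantum'' energy $\tfrac12\int|\d_x\sqrt{\rho_\tau}-\sqrt{\rho_\tau}\d_x\log\sqrt{\bar\rho}|^2=\tfrac18\int\tfrac{\bar\rho^2}{\rho_\tau}|\d_x(\rho_\tau/\bar\rho)|^2$, which, with $\int_\T w=0$ and Poincar\'e on $\T$, controls $\|w\|_{L^2_x}^2$ (constant depending only on $\delta$ and the uniform bounds — no convexity of $f$ needed); its Bregman structure is preserved by the same computation, yielding $\|w\|_{L^\infty_{t'}L^2_x}\lesssim\tau$ and $\|\d_x(\rho_\tau/\bar\rho)\|_{L^\infty_{t'}L^2_x}\lesssim\tau$, whence a short bootstrap ($\bar\rho\in[\delta,C]$, 1D interpolation) gives $\|\d_xw\|_{L^\infty_{t'}L^2_x}\lesssim\tau$ and $\|w\|_{L^\infty_{t',x}}\lesssim\tau$. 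Finally, for the Bohm quantity, the identities $\sqrt\rho\,\d_x^2\log\sqrt\rho=\d_x^2\sqrt\rho-(\d_x\sqrt\rho)^2/\sqrt\rho$ and $\d_x^2\sqrt{\rho_\tau}=-2\sqrt{\rho_\tau}(\tilde\mu_\tau-f'(\rho_\tau)-V_\tau)$ (and the analogues for $\bar\rho$) reduce $\sqrt{\rho_\tau}(\d_x^2\log\sqrt{\rho_\tau}-\d_x^2\log\sqrt{\bar\rho})$ to $-2\sqrt{\rho_\tau}(\tilde\mu_\tau-\bar\mu)$ plus terms that are $O(\tau)$ in $L^2_{t',x}$ by the $L^\infty$/$H^1$ rates and $\|V_\tau-\bar V\|_{H^2_x}\lesssim\|w\|_{L^2_x}$; and $\|\sqrt{\rho_\tau}(\tilde\mu_\tau-\bar\mu)\|_{L^2_{t',x}}\lesssim\tau$ follows from the relative--dissipation bound $\|\d_x(\tilde\mu_\tau-\bar\mu)\|_{L^2_{t',x}}\lesssim\tau$, Poincar\'e, and a short computation that the spatial mean $\int_\T(\tilde\mu_\tau-\bar\mu)\,dx$ is $O(\tau)$.

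\emph{Main obstacle.} The delicate point is precisely the diffusive term in Step~3: because the limit $\bar\rho$ is produced by compactness and only satisfies $\sqrt{\bar\rho}\in L^\infty_{t'}H^1_x\cap L^2_{t'}H^2_x$, $\d_{t'}\sqrt{\bar\rho}\in L^2_{t',x}$, every error in $\tfrac{d}{dt'}$ of the relative energy that naively involves $\d_{t'}\bar\mu$, $\d_x\bar v$ (with $\bar v=\bar J/\bar\rho$) or three spatial derivatives of $\sqrt{\bar\rho}$ has to be removed by integration by parts and by substituting the QDD relation $\d_{t'}\bar\rho=\d_x(\bar\rho\d_x\bar\mu)$; and one must keep track that each $O(1)$ contribution coming from the non-well-prepared data (e.g. $\int_0^{t'}\!\!\int\rho_\tau\bar v^2$, or the $O(1)$ initial kinetic energy) is genuinely accompanied by a $\tau^2$ prefactor — this is the non-trivial bookkeeping that makes the $\tau$-rate (rather than a mere $O(1)$ bound, as any functional containing the kinetic energy would give) come out.
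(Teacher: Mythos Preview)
Your Steps~1 and~2 (compactness and identification of the limit as a weak solution of \eqref{eq:qdde}) are essentially correct, though with one error in Step~2: for the ``dissipative solution'' claim you pass to the limit in the \emph{energy} identity, but Definition~\ref{def:qdde_ws} requires the \emph{entropy} inequality \eqref{eq:disp_entrp_1}, i.e.\ $H(\bar\rho)(t)+\int_0^t\!\int(\d_x^2\sqrt{\bar\rho})^2+(\d_x\bar\rho^{1/4})^4\le H(\rho_0)$. The paper obtains this by passing to the limit in the entropy estimate of Proposition~\ref{prop:entr}; convexity of $f$ is used only to guarantee $p'\ge0$ so the pressure contribution there has a sign.

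Step~3 contains a genuine gap. You assert that testing the difference equation against $(-\d_x^2)^{-1}w$ yields the dissipation $-c\int\rho_\tau|\d_x(\tilde\mu_\tau-\bar\mu)|^2$; it does not. The test function $(-\d_x^2)^{-1}w$ lies two derivatives \emph{below} $w$, whereas $\tilde\mu_\tau-\bar\mu$ is (to leading order) two derivatives \emph{above}; expanding $\rho\d_x\tilde\mu$ via \eqref{eq:bohm}, the pairing $\langle\d_x(\rho_\tau\d_x\tilde\mu_\tau-\bar\rho\d_x\bar\mu),(-\d_x^2)^{-1}w\rangle$ produces the leading term $\tfrac14\|\d_xw\|_{L^2}^2$ together with the cross-term $\int[(\d_x\sqrt{\rho_\tau})^2-(\d_x\sqrt{\bar\rho})^2]\,w$, and there is no mechanism by which convexity of $\mathcal E$ converts this into $\int\rho_\tau|\d_x(\tilde\mu_\tau-\bar\mu)|^2$ (monotonicity only gives $\langle\tilde\mu_\tau-\bar\mu,w\rangle\ge0$, a zeroth-order sign). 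The dissipation you want arises structurally only when one tests against $\tilde\mu_\tau-\bar\mu$ or, equivalently and as the paper does, against $g'(\rho_\tau)-g'(\bar\rho)=\log(\rho_\tau/\bar\rho)$: this is the time derivative of the relative \emph{physical entropy} $H(\rho_\tau|\bar\rho)=\int g(\rho_\tau)-g(\bar\rho)-g'(\bar\rho)(\rho_\tau-\bar\rho)$, which (since $\rho_\tau,\bar\rho\in[\delta,C]$) is equivalent to $\|\rho_\tau-\bar\rho\|_{L^2_x}^2$ and whose derivative, computed in Proposition~\ref{prop:6.2}, produces \emph{exactly} the dissipation $-\int\rho_\tau(\d_x^2\log\sqrt{\rho_\tau}-\d_x^2\log\sqrt{\bar\rho})^2$ that appears in the statement of the theorem. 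No $\dot H^{-1}$ step and no subsequent ``upgrade'' is needed: the $L^2$ rate and the Bohm rate are the relative entropy and its dissipation, respectively, and Gr\"onwall closes the estimate once the remaining terms in \eqref{eq:dtrlentr} are controlled (this is where the available regularity of $\bar\rho$ --- notably $\d_x^2(\bar\rho\d_x^2\log\sqrt{\bar\rho})\in L^2_{t,x}$, read off from the QDD equation itself --- is used).
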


We remark that our result works for general initial data $(\rho_0,J_0)$, and we allow the existence of initial layer. On the other hand, a better convergence rate can be obtained by using the relative energy method and in addition by assuming well-prepared initial data, see for instance \cite{LT}.

The proof of Theorem \ref{thm:rlxlimit} relies on deriving suitable a priori bounds for solutions to the hydrodynamical system, in order to pass to the limit (at least in the distributional sense) all terms in equations \eqref{eq:QHD_rs_intro}. Similar results are already present in the literature, especially for viscous hydrodynamic models. For instance, the authors in \cite{Blub} consider a one-dimensional Navier-Stokes-Korteweg-type system with (degenerate) viscosity, linear capillarity and nonlinear damping. By exploiting the a priori bounds derived from energy and BD-entropy estimates, the authors prove convergence in the class of weak solutions. More precisely, the BD entropy estimate yields additional regularity properties for the mass density, providing the sufficient compactness and allowing to pass to the limit even in the higher order nonlinear terms. Moreover, it is also shown in \cite{Blub} that the (scaled) BD entropy converges, in the limit $\tau\to0$, to the so called Bernis-Friedman entropy estimate for the limiting gradient flow equation. See also \cite{ACLS}, where the relaxation-time limit can be obtained (with no rate) for general weak solutions to quantum Navier-Stokes equations in the three-dimensional torus.

In the absence of viscous terms, such as for \eqref{eq:QHD}, no extra Sobolev bounds may be obtained from entropy estimates, such as of BD-type. 
In particular, the sole energy bounds cannot ensure the convergence of the quadratic term $(\d_x\sqrt\rho_\tau)^2$ appearing in the quantum stress tensor, see \eqref{eq:bohm}. To overcome this problem, and also to obtain dissipative properties of the rescaled equation \eqref{eq:QHD_rs_intro}, we introduce the physical entropy
\[
H(\rho_\tau)=\int_\T\rho_\tau\log\left(\frac{\rho_\tau}{M_0}\right)dx=\int_\T g(\rho_\tau)dx,
\]
where $M_0=\int_\T\rho_\tau dx$ is the conserved total mass of $\rho_\tau$, and $g(s)=s\log(s/M_0)$. 
The functional $H(\rho_\tau)$ is well-defined and can be uniformly bounded in time for GCP solutions, see Section \ref{sect:entrp}.
These estimates provide us with the $L^2_{t'}H^2_x$ bound of $\sqrt\rho_\tau$ uniformly with respect to $\tau$, which allows us to prove the relaxation-time limit. Moreover, we obtain an explicit convergence rate of this limit with respect to  $\tau$ by considering the relative entropy 
\begin{align*}
H(\rho_\tau|\bar\rho)(t')=\int_\T g(\rho_\tau)-g(\bar\rho)-g'(\bar\rho)(\rho_\tau-\bar\rho)dx,
\end{align*}
where $\bar\rho$ is the limiting solution of the QDD equation \eqref{eq:qdde}.

In fact the estimates given by the energy, the GCP fucntional and the entropy also allow us to prove a time decay estimate of $(\rho_\tau,J_\tau)$. By using \eqref{eq:bohm}, in the special case when there is no pressure and no electrostatic potential, the QDD equation \eqref{eq:qdde} reduces to the following fourth-order parabolic equation,
\begin{equation}\label{eq:DLSS}
\d_{t'}\bar\rho+\frac14\d_x^2(\bar\rho\d_x^2\log\bar\rho)=0,
\end{equation}
which independently appears also in various places of the  mathematical physics literature. Equation \eqref{eq:DLSS} has been first derived by Derrida, Lebowitz, Speer, and Spohn in \cite{DLSS1, DLSS2}, and we therefore refer to it as the \emph{DLSS equation}. A similar equation was also obtained in  \cite{Ber, BP1} as a model for thin films. Concerning the mathematical analysis of the DLSS equation, we address to \cite{DGJ,GST,JM} and references therein. By using different techniques, they prove the existence of non-negative solutions and show that the entropy computed along strictly positive solutions decays exponentially in time due to the dissipative feature of fourth-order parabolic equation. 
\newline
This latter property is actually true also for the hydrodynamic model. In particular, in the present paper we show that also solutions of the rescaled collisional QHD system \eqref{eq:QHD_rs_intro} possess exponentially decaying physical entropy (Theorem \ref{thm:disp}) for $\tau$ small enough and suitable functions $f(\rho)$ and $\mathcal{C}(x)$.  More precisely, in the following Theorem we assume the internal energy to be of the form $f(\rho_\tau)=(\rho_\tau-M_0)^{2n}$, $n\in\N$, and the doping profile to be $\mathcal{C}(x)=M_0$. This property distinguishes the collisional QHD system ($\frac{1}{\tau}>0$) from the non-collisional ($\frac{1}{\tau}=0$) one, and eventually breaks the time invertible structure of quantum mechanics as the strength of the thermal interaction between particles goes to infinity.

\begin{thm}[Dissipation for small $\tau$]\label{thm:disp}
Let $(\rho_\tau,J_\tau)$ be a GCP solution of the rescaled QHD system \eqref{eq:QHD_rs_intro} with $\inf_{t',x}\rho\geq\delta>0$. We also assume the internal energy to be $f(\rho_\tau)=(\rho_\tau-M_0)^{2n}$, $n\in\N$, and the electric background density $\mathcal{C}(x)\equiv M_0$. Then, there exists a constant $C=C(M_0, E_0, \delta)$, 
such that, by denoting $c_1 =C(M_0,E_0,\delta)^{-1}$, if $\tau\leq \tau_0=\min\left\{c_1, \frac{c_1^\frac12}{4},\left(\frac{2c_1\delta}{8+\delta}\right)^\frac12\right\}$, then the functional 
\[
F_\tau(t')=H(\rho_\tau)+E_\tau(t')+c_1 I_\tau(t')
\]
satisfies
\begin{equation}
F_\tau(t')\leq F_\tau(0)\exp\left(C(M_0,E_0, \delta)\int_0^{t'}\int_\T\rho_\tau v_\tau^2dxds-t'\right).
\end{equation}
In particular, $H(\rho_\tau)$, $E_\tau(t')$and $I_\tau(t')$ decay exponentially for $t'$ large.
\end{thm}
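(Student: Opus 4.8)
The plan is to establish a Gronwall-type differential inequality for the combined functional $F_\tau(t')=H(\rho_\tau)+E_\tau(t')+c_1 I_\tau(t')$, where the key point is that the entropy dissipation $-\frac{d}{dt'}H(\rho_\tau)$ must control, modulo the energy dissipation term $\int_\T\rho_\tau v_\tau^2\,dx$ and lower-order contributions, the entire functional $F_\tau$, so that we obtain $\frac{d}{dt'}F_\tau(t')\leq \left(C\int_\T\rho_\tau v_\tau^2\,dx-1\right)F_\tau(t')$ and the result follows by integrating. First I would compute, using the first equation of \eqref{eq:QHD_rs_intro} and the identity \eqref{eq:bohm}, the time derivative of the entropy $H(\rho_\tau)=\int_\T g(\rho_\tau)\,dx$ with $g(s)=s\log(s/M_0)$. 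Since $g'(\rho_\tau)=\log(\rho_\tau/M_0)+1$, and $\d_{t'}\rho_\tau=-\d_x J_\tau$, one gets $\frac{d}{dt'}H(\rho_\tau)=-\int_\T g'(\rho_\tau)\d_x J_\tau\,dx=\int_\T \frac{\d_x\rho_\tau}{\rho_\tau}J_\tau\,dx=\int_\T \frac{\d_x\rho_\tau}{\rho_\tau}\rho_\tau v_\tau\,dx=\int_\T \d_x\rho_\tau\, v_\tau\,dx$. This is not yet manifestly dissipative; to extract the genuine fourth-order dissipation one must substitute the expression for $J_\tau$ coming from the \emph{momentum} equation in \eqref{eq:QHD_rs_intro}, i.e. $J_\tau=\tfrac12\rho_\tau\d_x\!\big(\tfrac{\d_x^2\sqrt{\rho_\tau}}{\sqrt{\rho_\tau}}\big)-\d_x p(\rho_\tau)-\rho_\tau\d_x V_\tau-\tau^2\d_{t'}J_\tau-\tau^2\d_x(J_\tau^2/\rho_\tau)$. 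The leading quantum term, after integration by parts, yields $-\frac14\int_\T \rho_\tau(\d_x^2\log\rho_\tau)^2\,dx$ (the Fisher-information-type dissipation, which up to lower order equals $-c\|\sqrt{\rho_\tau}\|_{H^2}^2$ modulo the energy bound), the pressure term with $f(\rho_\tau)=(\rho_\tau-M_0)^{2n}$ contributes a nonnegative-definite dissipation since $p'(\rho_\tau)\geq 0$ on the relevant range once $\rho_\tau$ is bounded (or can be absorbed), the potential term combines with $\mathcal{C}\equiv M_0$ and the Poisson equation $-\d_x^2 V_\tau=\rho_\tau-M_0$ to produce $-\int_\T(\rho_\tau-M_0)^2\,dx\cdot(\text{const})$ up to sign bookkeeping, and the remaining $\tau^2$ terms are controlled by $\tau^2$ times the GCP bounds \eqref{eq:bd_I_rs} and the dissipation $\int\rho_\tau v_\tau^4$.

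Next I would combine this with the energy balance \eqref{eq:en_disp_intro} (rescaled: $\frac{d}{dt'}E_\tau(t')=-\int_\T\rho_\tau v_\tau^2\,dx$) and with the evolution of $I_\tau(t')$, whose rescaled version follows from \eqref{eq:dtI_intro} and Proposition \ref{prop:3.2}: $\frac{d}{dt'}I_\tau(t')+\int_\T\rho_\tau\sigma_\tau^2\,dx$ equals a sum of terms involving $\mu_\tau\d_{t'}p(\rho_\tau)$, $\rho_\tau\mu_\tau\d_{t'}V_\tau$, and the collisional term $-\int_\T\rho_\tau v_\tau^2\mu_\tau\,dx$, the last being the dangerous one. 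The strategy is to weight $I_\tau$ by the small constant $c_1=C(M_0,E_0,\delta)^{-1}$ so that the bad contributions from $\frac{d}{dt'}I_\tau$ are dominated: the term $c_1\int_\T\rho_\tau v_\tau^2\mu_\tau\,dx$ is bounded via Young's inequality by $\tfrac12\int_\T\rho_\tau v_\tau^2\,dx\cdot(\text{stuff})+c_1^2\int_\T\rho_\tau v_\tau^2\mu_\tau^2\,dx$ — but $\int_\T\rho_\tau\mu_\tau^2$ is exactly $2I_\tau/\tau^2$, so this is $\lesssim (c_1^2/\tau^2)\|v_\tau\|_\infty^2\cdot\tau^2 I_\tau$; here the threshold $\tau\leq\tau_0$ and the lower bound $\rho_\tau\geq\delta$ (to control $\|v_\tau\|_\infty\lesssim\|\sqrt{\rho_\tau}v_\tau\|_{H^1}/\delta^{1/2}$ and convert $L^2$-type bounds) are used to make the coefficient small enough that it gets absorbed into $C\int_\T\rho_\tau v_\tau^2\,dx\cdot F_\tau$. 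The explicit form of $\tau_0=\min\{c_1, c_1^{1/2}/4, (2c_1\delta/(8+\delta))^{1/2}\}$ should emerge precisely from three such absorption inequalities: one to control the $\tau^2$-inertial corrections in the entropy dissipation, one to control the collisional term in $\frac{d}{dt'}I_\tau$, and one to balance the $c_1 I_\tau$ term against the entropy dissipation $-\frac14\int\rho_\tau(\d_x^2\log\rho_\tau)^2$ (noting that $\mu_\tau$ contains $\frac{1}{\tau}\cdot\frac{\d_x^2\sqrt{\rho_\tau}}{2\sqrt{\rho_\tau}}$, so $\tau^2\rho_\tau\mu_\tau^2$ is comparable, up to the lower-order pieces $\tau^2 v_\tau^2$, $f'(\rho_\tau)$, $V_\tau$ which are controlled by the energy, to $\rho_\tau(\d_x^2\log\sqrt{\rho_\tau})^2$, exactly the quantity dominated by the entropy dissipation).

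The main obstacle I expect is showing that the \emph{entire} $F_\tau$, and in particular $E_\tau(t')$ and $I_\tau(t')$, is controlled by the entropy dissipation rate (plus the $\int\rho_\tau v_\tau^2$ slack). For the energy part this requires a Poincaré-type / logarithmic-Sobolev-type inequality on the torus: the entropy dissipation $\frac14\int_\T\rho_\tau(\d_x^2\log\rho_\tau)^2\,dx + \int_\T p'(\rho_\tau)\tfrac{(\d_x\rho_\tau)^2}{\rho_\tau}\,dx + \int_\T(\rho_\tau-M_0)^2\,dx$ must bound $\int_\T(\d_x\sqrt{\rho_\tau})^2\,dx + \int_\T f(\rho_\tau)\,dx + \int_\T(\d_x V_\tau)^2\,dx$ up to a constant depending only on $M_0,E_0,\delta$; the $H^1$-control of $\sqrt{\rho_\tau}$ from the $H^2$-type dissipation uses interpolation together with the conserved mass $\int\rho_\tau=M_0$ and $\rho_\tau\geq\delta$ (so $\sqrt{\rho_\tau}\geq\delta^{1/2}>0$ keeps $\log\rho_\tau$ and its derivatives well-behaved), while $(\d_x V_\tau)^2$ is controlled by $(\rho_\tau-M_0)^2$ directly through the Poisson equation and Poincaré. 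For the $I_\tau$ part, as noted above, $\tau^2\rho_\tau\sigma_\tau^2=\tau^2\rho_\tau\sigma_\tau^2$ is dissipated with the right sign in the $\frac{d}{dt'}I_\tau$ identity, and $\tau^2\rho_\tau\mu_\tau^2$ must be absorbed using that $\tau\leq\tau_0\leq c_1^{1/2}$ makes $c_1\cdot\tau^2\rho_\tau\mu_\tau^2$ comparable to a fraction of the entropy dissipation. Once all pieces fit, the differential inequality $\frac{d}{dt'}F_\tau\leq\big(C(M_0,E_0,\delta)\int_\T\rho_\tau v_\tau^2\,dx-1\big)F_\tau$ integrates immediately to the stated bound, and exponential decay of each of $H(\rho_\tau)$, $E_\tau$, $I_\tau$ follows because $\int_0^\infty\int_\T\rho_\tau v_\tau^2\,dx\,ds\leq E_0<\infty$ by \eqref{eq:en_disp_intro}, so the exponent is eventually dominated by $-t'$.
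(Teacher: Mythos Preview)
Your proposal is correct and follows essentially the same architecture as the paper's proof (Proposition~\ref{prop:decay}): combine the entropy inequality \eqref{eq:ineq_entr_rs}, the energy balance \eqref{eq:en_rs}, and the rescaled $I_\tau$ inequality \eqref{eq:ineqI_rs}, choose $c_1$ small so that the dissipation terms absorb the $I_\tau$ source terms, then use the logarithmic Sobolev inequality (Lemma~\ref{lem:logineq}) and Poincar\'e to close and apply Gronwall. One technical point you gloss over is that the $\tau^2\d_{t'}J_\tau$ contribution in the entropy computation produces a total time derivative $-\tau^2\frac{d}{dt'}\int_\T\log\rho_\tau\,\d_{t'}\rho_\tau\,dx$ that the paper keeps attached to $F_\tau$ throughout (see \eqref{eq:dtF_1}--\eqref{eq:dtF_3}) and only removes at the end via the equivalence \eqref{eq:equivF}; this is precisely where the threshold $\tau\leq c_1$ enters, while $\tau\leq c_1^{1/2}/4$ and $\tau\leq(2c_1\delta/(8+\delta))^{1/2}$ come respectively from keeping the coefficients of $\int\rho_\tau\sigma_\tau^2$ and $\int\rho_\tau v_\tau^4$ nonnegative in \eqref{eq:dtF_2}.
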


\begin{rem}
For the DLSS equation \eqref{eq:DLSS}, the authors of \cite{JM} provide the following example of a non-dissipative solution,
\[
\rho_\tau=\cos^2x,\quad J_\tau=0,
\]
which is also a solution to equation \eqref{eq:QHD_rs_intro} in the pressureless and zero-electric field case ($p(\rho)=0$ and $V=0$). This example suggests that the dissipation depends on the strictly positivity of $\rho$.
\end{rem}

\subsection{Outline of the paper}

Our paper is structured as follows. Definitions and preliminaries results are given in Section 2. In Section \ref{sect:exist} we prove   Theorem \ref{thm:glob2} by analyzing the Schr\"odinger-Langevin equation \eqref{eq:NLS} and the a priori bounds given by $E(t)$ and $I(t)$. The relation between \eqref{eq:NLS} and \eqref{eq:QHD} is rigorously established by the methods of wave function lifting and polar factorization developed in \cite{AM1,AM2,AMZ1}. Starting from Section \ref{sect:rs}, we consider the rescaled system \eqref{eq:QHD_rs_intro}. The property of the physical entropy $H(\rho_\tau)$ and dissipation for GCP solutions with small $\tau$ are interpreted in Section \ref{sect:entrp}. Finally, the rigorous proof of the relaxation-time limit in the framework of GCP solutions is concluded in Section \ref{sect:rxl}.

\section{Definitions and preliminaries}\label{sect:def}
In this section we first provide some notation and definition that will be used through out this paper.

Lebesgue and Sobolev norms on $\T$ are denoted by
\[
||f||_{L_{x}^{p}}\coloneqq(\int_{\T}|f(x)|^{p}dx)^{\frac{1}{p}},
\]
\[
||f||_{W_{x}^{k,p}}\coloneqq\sum_{j=0}^k||\partial_{x}^{j}f||_{L_{x}^{p}},
\]
and $H_{x}^{k}\coloneqq H^{k}(\T)$ denotes the Sobolev space $W^{k,2}(\T)$.
Given a time interval $I\subset[0, \infty)$, the mixed space-time Lebesgue norm of $f:I\to L^{r}(\T)$ is
given by
\[
||f||_{L_{t}^{q}L_{x}^{r}}\coloneqq\left(\int_{I}||f(t)||_{L_{x}^{r}}^{q}dt\right)^{\frac{1}{q}}=\left(\int_{I}\left(\int_{\T}|f(x)|^{r}dx\right)^{\frac{q}{r}}dt\right)^{\frac{1}{q}}.
\]
Similarly the mixed
Sobolev norm $L_{t}^{q}W_{x}^{k,r}$ is defined. We use $C$ to denote a generic constant,$C(A)$ indicates its dependence on the quantity $A$.

By using identity \eqref{eq:bohm} and defining the velocity $v=\frac{J}{\rho}$, system \eqref{eq:QHD} becomes
\begin{equation}\label{eq:QHD_1d}
\left\{\begin{aligned}
&\d_t\rho+\d_x(\rho v)=0\\
&\d_t(\rho v)+\d_x[\rho v^2+p(\rho)+(\d_x\sqrt{\rho})^2]+\rho\d_xV=\frac14\d_{x}^3\rho-\frac{1}{\tau}\rho v.
\end{aligned}\right.
\end{equation}
Let us also recall the total mass and total energy are respectively given by
\begin{equation*}
M(t)=\int_\T\rho(t, x)\,dx,
\end{equation*}
and
\begin{equation}\label{eq:en_1d}
E(t)=\int_{\T}\frac12\rho v^2+\frac12(\d_x\sqrt{\rho})^2+f(\rho)+\frac12(\d_xV)^2dx.
\end{equation}

As already mentioned in the Introduction, there is a formal analogy between system \eqref{eq:QHD_1d} and the NLS-type equation \eqref{eq:NLS}, through the so-called Madelung transformation \cite{Mad}. Indeed, this may be immediately seen by expressing the wave function $\psi$ in terms of its amplitude and phase, i.e.
\begin{equation}\label{eq:mad}
\psi=\sqrt{\rho}e^{iS}.
\end{equation}
By plugging this ansatz into \eqref{eq:NLS} and by further separating real and imaginary parts of the resulting equation, we deduce a system of evolution equations for $\rho$ and $S$, 
\begin{equation}\label{eq:qHJ}
\left\{\begin{aligned}
&\d_t\rho+\d_x(\rho\nabla S)=0\\
&\d_tS+\frac12|\d_x S|^2+f'(\rho)+V+\frac1\tau S=\frac12\frac{\d_{xx}\sqrt{\rho}}{\sqrt{\rho}}.
\end{aligned}\right.
\end{equation}
If we now differentiate the second equation in \eqref{eq:qHJ} with respect to the space variable, we obtain an Euler-type equation for the velocity field $v=\d_xS$, namely
\begin{equation}\label{eq:qvel}
\d_tv+\d_x\left(\frac12v^2+f'(\rho)+V-\frac12\frac{\d_{xx}\sqrt{\rho}}{\sqrt{\rho}}\right)=-\frac1\tau v.
\end{equation}
By recalling the definition of the chemical potential in \eqref{eq:chem}, we see that \eqref{eq:qvel} equals the second equation in \eqref{eq:Ham_form}.
The momentum density equation in the QHD system \eqref{eq:QHD} is then derived by multiplying \eqref{eq:qvel} by $\rho$ and by using the continuity equation.
\newline
In general, the analogy between \eqref{eq:NLS} and \eqref{eq:QHD_1d} fails when the Madelung transformation \eqref{eq:mad} becomes singular, namely in the presence of vacuum regions. It is, however, possible to provide a rigorous justification of such equivalence by means of the polar factorization and wave function lifting techniques, developed by the authors in \cite{AM1, AM2, AMZ1}. In the present paper, since we focus on vacuum-free solutions, we will omit some technical details related to the polar factorization and wave function lifting, and we simply provide a quick overview of the main objects. For a more comprehensive discussion, we address the interested reader to \cite{AM_b, HaoMinE}.
\newline
First,
it is straightforward to see that the objects $\sqrt{\rho}$ and $\sqrt{\rho}v$ provide a senseful definition of weak solutions to \eqref{eq:QHD_1d} and the energy bound in \eqref{eq:en_1d}. Namely, by denoting $\Lambda=\sqrt{\rho}v$, the \emph{hydrodynamic state} $(\sqrt{\rho}, \Lambda)$ carries all the necessary information.

\begin{defn}[Finite energy weak solutions]\label{def:FEWS}
Let $\rho_0, J_0\in L^1(\T)$ and $\rho_0>0$. We say the pair $(\rho, J)$ is a weak solution to the Cauchy problem for \eqref{eq:QHD_1d} with initial data
$
\rho(0)=\rho_0$, $(\rho v)(0)=J_0,
$
on $[0, T)\times\T$ if there exists $(\sqrt{\rho}, \Lambda)$ such that $\rho=(\sqrt{\rho}^2)$, $J=\sqrt{\rho}\Lambda$ and $(\rho, J)$ satisfies the QHD system \eqref{eq:QHD_1d} in the sense of distribution.
Moreover, we say $(\rho,J)$ is a finite mass weak solution, if for almost every $t\in[0, T)$ we have $M(t)<\infty$. Analogously, $(\rho,J)$ is called a finite energy weak solution if for almost every $t\in[0, T)$, we also have $E(t)<\infty$. 
\end{defn}
\begin{rem}
Notice that in the present paper we deal with vacuum-free solutions, hence the equation \eqref{eq:QHD_1d} may be expressed in terms of $\rho$ and $v$.
\end{rem}
Moreover, we recall the polar factorization Lemma, first developed in Section 3 of \cite{AM1,AM2}, that defined the hydrodynamic state $(\sqrt{\rho}, \Lambda)$ associated to a given wave function $\psi\in H^1$, with no restriction on the vacuum region. This is achieved by exploiting the polar factor $\phi$ associated to $\psi$, defined through the identity $\psi=|\psi|\phi$. A proof of the following Lemma may be found in \cite[Proposition 1.2]{AM_b}.
\begin{lem}[Polar factorization]\label{lem:polar}
Let $\psi\in H^1(\T)$ and let $P(\psi)$ be the  set of polar factors, namely
\[
P(\psi)=\{\phi\in L^\infty(\T);\,\psi=|\psi|\phi\quad\textrm{for a.e }x\in\T\}.
\]
Then for $\phi\in P(\psi)$, by defining $\sqrt{\rho}=|\psi|$, $\Lambda=\IM(\bar\phi\d_x\psi)$, we have:
\begin{itemize}
\item $\sqrt{\rho}\in H^1$ and it holds 
$\d_x\sqrt{\rho}=\RE(\bar\phi\d_x\psi)$;
\item 
$|\d_x\psi|^2=
(\d_x\sqrt{\rho})^2+\Lambda^2$ a.e. in $\T$.
\end{itemize}
\end{lem}

We also recall the definition of GCP solutions, characterized by the chemical potential introduced in \eqref{eq:chem}
\[
\mu=-\frac{\d_x^2\sqrt\rho}{2\sqrt\rho}+\frac12v^2+f'(\rho)+V
\]
and its $L^2$ norm with respect to density measure $\rho dx$.
As before, we remark that this definition may comprise a more general class of weak solutions, allowing for states with vacuum regions. We refer to \cite{AMZ1} for more details.
In what follows we provide the definition of weak solutions with bounded generalized chemical potential.

\begin{defn}[GCP solutions]\label{def:GCPsln}
Let $(\rho, v)$ be a finite energy weak solution to \eqref{eq:QHD_1d} on 
$[0,T]\times\R$. We say that $(\rho, v)$ is a GCP solution for the system \eqref{eq:QHD_1d} if the following bounds are satisfied
\begin{equation*}
\begin{aligned}
\|\sqrt{\rho}\|_{L^\infty(0, T; H^1(\T))}+\|\sqrt\rho v\|_{L^\infty(0, T; L^2(\T))}\leq& C,\\
\|\d_t\sqrt{\rho}\|_{L^\infty(0, T; L^2(\T))}+\|\sqrt\rho\mu\|_{L^\infty(0, T; L^2(\T))}\leq& C.
\end{aligned}
\end{equation*}
\end{defn}

Let us recall that, by the Madelung transformation \eqref{eq:mad}, we have $v=\d_xS$. Consequently, from \eqref{eq:qHJ}, we have
\begin{equation}\label{eq:gradS}
\begin{cases}
\d_xS =v,\quad\d_t S=-\mu-\frac{1}{\tau}S \\
S(0,0)=S_*.
\end{cases}
\end{equation}
On the other hand, the equation \eqref{eq:qvel} for the velocity field may be interpreted as the "irrotationality" condition for the gradient equation \eqref{eq:gradS}, namely
\begin{equation}\label{eq:qvel_irrot}
\d_tv+\d_x(\mu+\frac1\tau S)=0.
\end{equation}
Thus we can formally write the phase function $S$ in hydrodynamic functions as
\begin{equation}\label{eq:defS_pt}
S(t,x)=e^{-\frac{t}{\tau}} \left(S_*+\int_0^{x_*}v_0(y)dy\right)-\int_0^t e^{\frac{s-t}{\tau}}\mu(s,x_*)ds+\int_{x_*}^x v(t,y)dy,
\end{equation}
where $x_*\in \T$ is an arbitrary point and has no influence on the definition due to the "irrotationality" condition \eqref{eq:qvel_irrot}. However, for weak solutions $(\rho,J)$, the chemical potential $\mu$ is merely a Lebesgue function or even a distribution. Since \eqref{eq:defS_pt} should be independent of the point $x_*$, we can take the average of \eqref{eq:defS_pt} with respect to $x_*$, which allows us to extend the hydrodynamic definition of the phase function $S$ to more general cases.

\begin{defn}[Phase function]\label{def:phase}
Let $(\rho,v)$ be a pair a solution of \eqref{eq:QHD} on $[0,T)\times\T$ such that the velocity $v$ and the chemical potential $\mu$ given by \eqref{eq:chem} satisfy, 
\[
v\in L^\infty_tL^1_x,\quad\mu\in \mathcal{D}([0,T)\times\T),
\]
and $\int_\T v dx=0$. Then we define the phase function $S$ to be the solution of the gradient equation \eqref{eq:gradS}, which can be explicitly written as
\begin{equation}\label{eq:defS}
\begin{aligned}
S(t,x)=&e^{-\frac{t}{\tau}} \left(S_*+\int_\T\int_0^{x_*}v_0(y)dydx_*\right)\\
&-\int_0^t \int_\T e^{\frac{s-t}{\tau}}\mu(s,x_*)dx_*ds+\int_\T\int_{x_*}^x v(t,y)dydx_*.
\end{aligned}
\end{equation}
\end{defn}

\begin{rem}
The condition $\int_\T v dx=0$ is needed to make $S$ a periodic function on $\T$, however Definition \ref{def:phase} still holds without this condition.
\end{rem}

\begin{rem}\label{rem:equivS}
Let us remark that the assumption $|\psi|\geq\sqrt{\delta}$ clearly implies that the logarithm $S=\frac{1}{i}\log\left(\frac{\psi}{|\psi|}\right)$ is well defined. Consequently, by the uniqueness of equation \eqref{eq:gradS}, it coincides with \eqref{eq:defS}, if we suitably choose $S_*$.
\end{rem}

Next, the physical entropy $H(\rho)$ of a density function $\rho\geq 0$ on $[0,T)\times\T$ is defined by
\begin{equation*}
H(\rho)=\int_\T\rho\log\left(\frac{\rho}{M_0}\right),
\end{equation*}
where $M_0\equiv M(t)$ is the conserved total mass of $\rho$ given by \eqref{eq:mass}. Since we assume $|\T|=1$, $M_0$ can also be viewed as the average of $\rho$, therefore by convexity of the function $g(s)=s\log s$, we have 
\[
H(\rho)=\int_\T \rho\log\rho dx-M_0\log M_0\geq 0.
\]
For later use in the analysis of relaxation-time limit, we now give the definition of weak solutions to the quantum drift-diffusion equation \eqref{eq:qdde}.

\begin{defn}\label{def:qdde_ws}
We say $\bar\rho$ is a weak solution to the quantum drift-diffusion equation \eqref{eq:qdde} with initial data $\bar\rho(0)=\bar\rho_0\in L^1(\T)$ on $[0,T)\times\T$, if $\sqrt{\bar\rho}\in L^2_{loc}(0,T;H^1(\T))$ such that for any $\eta\in \mathcal{C}_0^\infty([0,T)\times\T)$,
\begin{equation}\label{eq:qdde_cty}
\begin{aligned}
\int_0^T\int_\T\bar\rho\d_t\eta dxdt+\int_0^T\int_\T&[(\d_x\sqrt{\bar\rho})^2+p(\bar\rho)]\d_x^2\eta-\bar\rho\d_x \bar V\d_x\eta\\
&-\frac14\bar\rho\d_x^4\eta dxdt+\int_\T\bar\rho_0\eta(0)dx=0.
\end{aligned}
\end{equation}

Moreover, $\bar\rho$ is called a dissipative solution if $\d_x^2\sqrt{\bar\rho}\in L^2([0,T)\times\T)$ and $\d_x(\bar\rho^\frac14)\in L^4([0,T)\times\T)$, which satisfies
\begin{equation}\label{eq:disp_entrp_1}
H(\bar\rho)(t)+\int_0^t\int_\T(\d_x^2\sqrt{\bar\rho})^2+(\d_x(\bar\rho^\frac14))^4dxdt\leq H(\rho_0)
\end{equation}
for any $0\leq t<T$.
\end{defn}

For later use of this paper, we collect some preliminary results that provide a rigorous connection between the collisional QHD system \eqref{eq:QHD} and the NLS equation \eqref{eq:NLS}. More specifically, we are going to recall the wave function lifting method, that gives sufficient and necessary conditions for a hydrodynamic data $(\rho,v)$ to have an associated wave function $\psi\in H^1(\T)$. 
The wave function lifting was first introduced in \cite{AMZ1,AMZ2}, where it also applies to general cases where non-trivial vacuum regions are allowed. In this paper we may restrict to the simpler case of non-vanishing wave functions.
The following Proposition collects some useful facts about the $H^1$ wave function lifting, for more details we refer to \cite[Proposition 17 and Lemma 19]{AMZ1} and their proofs.

\begin{prop}[$H^1$ wave function lifting]\label{prop:lift1}
Let $(\rho,v)$ be a hydrodynamic data with strictly positive density $\inf_x\rho\geq \delta >0$. There exists a unique (upto constant phase shifts) associated wave function $\psi\in H^1(\T)$ 
if and only if there exists a constant $0<M_1<\infty$ such that 
$$
\|\sqrt{\rho}\|_{H^1(\T)}+\|\sqrt\rho v\|_{L^2(\T)}\leq M_1.
$$ 
Furthermore we have 
\[
\d_x\psi=(\d_x\sqrt\rho+i\rho v)\phi,\quad \|\psi\|_{H^1(\T)}\leq C(M_1),
\]
where $\phi\in P(\psi)$ is a polar factor of $\psi$.
\end{prop}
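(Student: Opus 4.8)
The plan is to establish the two implications and the uniqueness statement separately, following the wave function lifting machinery of \cite{AMZ1}; the substance of the argument is in the sufficiency direction, where $\psi$ has to be reconstructed from the hydrodynamic pair, and the only genuinely non-routine point is a topological one.

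\emph{Necessity.} Suppose an associated $\psi\in H^1(\T)$ exists, with polar factor $\phi\in P(\psi)$ and $\Lambda:=\IM(\bar\phi\d_x\psi)=\sqrt\rho\,v$. By the polar factorization Lemma recalled above, $\sqrt\rho=|\psi|\in H^1(\T)$, $\d_x\sqrt\rho=\RE(\bar\phi\d_x\psi)$, and $|\d_x\psi|^2=(\d_x\sqrt\rho)^2+\Lambda^2$ almost everywhere on $\T$. Integrating this identity over $\T$ and using $\|\psi\|_{L^2}^2=\int_\T|\psi|^2\,dx=\|\sqrt\rho\|_{L^2}^2$ gives $\|\sqrt\rho\|_{H^1}^2+\|\sqrt\rho v\|_{L^2}^2=\|\psi\|_{H^1}^2<\infty$, so the stated bound holds with $M_1=\|\psi\|_{H^1}$.

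\emph{Sufficiency.} Here I would reduce the construction of $\psi$ to a scalar linear ODE. Since $\inf_x\rho\geq\delta>0$ and $\sqrt\rho\in H^1(\T)\hookrightarrow C(\T)$ is bounded below by $\sqrt\delta$, both $\d_x\log\sqrt\rho=\d_x\sqrt\rho/\sqrt\rho$ and $v=(\sqrt\rho v)/\sqrt\rho$ belong to $L^2(\T)$, so the complex coefficient $b:=\d_x\log\sqrt\rho+iv$ lies in $L^2(\T)\subset L^1(\T)$ with $\|b\|_{L^2}\leq C(M_1,\delta)$. Consider the Cauchy problem $\d_x\psi=b\,\psi$ on $[0,1]$ with $\psi(0)$ any complex number of modulus $\sqrt{\rho(0)}$; it admits the unique absolutely continuous solution $\psi(x)=\psi(0)\exp(\int_0^x b(y)\,dy)$. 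One checks directly that $|\psi(x)|^2=\rho(0)\exp(\int_0^x\d_x\log\rho\,dy)=\rho(x)$, so $|\psi|=\sqrt\rho$; writing $\phi=\psi/|\psi|$ one gets $\bar\phi\d_x\psi=b\sqrt\rho$, hence $\RE(\bar\phi\d_x\psi)=\d_x\sqrt\rho$, $\IM(\bar\phi\d_x\psi)=\sqrt\rho\,v=\Lambda$, and $\d_x\psi=(\d_x\sqrt\rho+i\Lambda)\phi$, so that $\psi$ is an associated wave function. The delicate step is that $\psi$ closes up into a \emph{periodic} function on $\T$: one has $\RE\int_\T b\,dx=\int_\T\d_x\log\sqrt\rho\,dx=0$ automatically, while $\IM\int_\T b\,dx=\int_\T v\,dx$, which in the present framework is normalized to $0$ (cf.\ the zero-mean condition in \eqref{eq:cond_1} and Definition \ref{def:phase}); therefore $\psi(1)=\psi(0)$, and since $\d_x\psi=b\psi\in L^2(\T)$ with $\psi$ bounded, $\psi\in H^1(\T)$. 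Finally $|\d_x\psi|^2=|b|^2\rho=(\d_x\sqrt\rho)^2+\Lambda^2$ almost everywhere, so $\|\psi\|_{H^1}^2=\|\sqrt\rho\|_{H^1}^2+\|\sqrt\rho v\|_{L^2}^2\leq M_1^2$, i.e.\ $\|\psi\|_{H^1}\leq C(M_1)$.

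\emph{Uniqueness and main obstacle.} If $\psi_1,\psi_2\in H^1(\T)$ are both associated, each $\phi_j=\psi_j/|\psi_j|\in H^1(\T)$ satisfies $\bar\phi_j\d_x\phi_j=iv$ (since $|\phi_j|=1$ forces $\RE(\bar\phi_j\d_x\phi_j)=0$, and being associated forces the imaginary part to equal $v$), so both $\psi_j$ solve the same ODE $\d_x\psi=b\psi$; as $\inf|\psi_j|=\inf\sqrt\rho\geq\sqrt\delta>0$, the ratio $\psi_2/\psi_1$ is absolutely continuous with vanishing derivative and constant modulus $1$, hence $\psi_2=e^{i\theta}\psi_1$ for some $\theta\in\R$. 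I expect the main obstacle to be precisely this global closing-up of the phase: one must ensure the wave function reconstructed from $v$ is single-valued on the torus, i.e.\ that the winding number $\frac1{2\pi}\int_\T v\,dx$ is an integer, which in the finite-energy setting adopted here is trivialized by the normalization $\int_\T v\,dx=0$. The remaining ingredients — solvability of the linear scalar ODE with integrable coefficient, the modulus computation, and the translation between the pointwise polar factorization identity and the $H^1$ norms — are routine once the polar factorization and wave function lifting tools are granted.
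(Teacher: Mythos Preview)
The paper does not supply its own proof of this proposition: it is stated in Section~\ref{sect:def} as a preliminary result imported from \cite{AMZ1} (and summarized in \cite{HaoMinE}), so there is no in-text argument to compare against. That said, your approach---reconstructing $\psi$ as the solution of the scalar linear ODE $\d_x\psi=b\psi$ with $b=\d_x\log\sqrt\rho+iv$, then checking the polar factorization identities and the $H^1$ bound---is exactly the standard route taken in those references for the vacuum-free case, and your necessity and uniqueness arguments are correct.

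Your identification of the periodicity of $\psi$ on $\T$ as the only non-routine point is accurate, and you handle it correctly by invoking the zero-mean condition $\int_\T v\,dx=0$ that the paper imposes throughout (cf.\ \eqref{eq:cond_1} and Definition~\ref{def:phase}). Strictly speaking the proposition as stated omits this hypothesis, but it is implicit in the paper's setting; the more general condition would be $\int_\T v\,dx\in 2\pi\Z$. One minor remark: your computation gives $\d_x\psi=(\d_x\sqrt\rho+i\sqrt\rho\,v)\phi$, which is the correct identity---the ``$\rho v$'' appearing in the statement is evidently a typo for ``$\sqrt\rho\,v$'' (i.e.\ $\Lambda$), as the polar factorization lemma just above confirms.
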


The $H^2$ wave function lifting is given in the framework of the generalized chemical, see Section 3 in \cite{AMZ1} for a more detailed discussion. A more complete result, valid also for wave functions with non-trivial vacuum regions, may be found in \cite[Proposition 24]{AMZ1}.

\begin{prop}[$H^2$ wave function lifting]\label{prop:lift2}
Let $(\rho,v)$ be a hydrodynamic data with strictly positive density $\inf_x\rho\geq \delta >0$. Let us also assume 
\begin{equation}\label{eq:bd_lift2}
\begin{aligned}
&\|\sqrt{\rho}\|_{H^1}+\|\sqrt\rho v\|_{L^2}\leq M_1,\\
&\|\d_x (\rho v)/\sqrt{\rho}\|_{L^2(\T)}+\|\sqrt{\rho} v^2-\d_x^2\sqrt{\rho}\|_{L^2(\T)}\leq M_2,
\end{aligned}
\end{equation}
for some constants $M_1,M_2<\infty$. Then there exists a unique wave function $\psi\in H^2(\T)$, associated to $(\sqrt\rho,\Lambda)$, and it follows that
\begin{equation}\label{eq:H2_lift}
\|\psi\|_{H^2(\T)}\leq C(M_1,M_2).
\end{equation}
\end{prop}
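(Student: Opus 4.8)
The plan is to build the wave function in two stages: first apply the $H^1$ wave function lifting (Proposition~\ref{prop:lift1}), then bootstrap the extra regularity from the second line of \eqref{eq:bd_lift2}, crucially exploiting that the density stays away from vacuum. Since $\inf_x\rho\ge\delta>0$ and the first bound in \eqref{eq:bd_lift2} holds, Proposition~\ref{prop:lift1} produces a wave function $\psi\in H^1(\T)$, unique up to a constant phase shift, associated to $(\sqrt\rho,v)$, with $\|\psi\|_{H^1}\le C(M_1)$ and $\d_x\psi=(\d_x\sqrt\rho+i\Lambda)\phi$, where $\Lambda=\sqrt\rho v$ and $\phi$ is a polar factor of $\psi$. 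As $|\psi|=\sqrt\rho\ge\sqrt\delta$, the wave function is nowhere vanishing, so $\phi=\psi/|\psi|$ is the unique polar factor, $|\phi|=1$ a.e., and, since $1/\sqrt\rho\in H^1(\T)$, one checks that $\phi\in H^1(\T)\cap L^\infty$ with $\d_x\phi=iv\phi$ in the weak sense.

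Next I would bootstrap the hydrodynamic variables. The second line of \eqref{eq:bd_lift2} together with $\rho\ge\delta$ gives $\d_x(\rho v)\in L^2$, while $\rho v=\sqrt\rho\,\Lambda\in L^2$ since $\sqrt\rho\in H^1(\T)\hookrightarrow L^\infty$; hence $\rho v\in H^1(\T)\hookrightarrow L^\infty$ and $v=(\rho v)/\rho\in L^\infty$. Then $\sqrt\rho\,v^2=v\Lambda\in L^2$, so the bound on $\|\sqrt\rho\,v^2-\d_x^2\sqrt\rho\|_{L^2}$ forces $\d_x^2\sqrt\rho\in L^2$, i.e. $\sqrt\rho\in H^2(\T)$; since $H^2(\T)$ is a Banach algebra and $\rho\ge\delta$, also $\rho\in H^2(\T)$ and $1/\rho\in H^2(\T)$, whence $v=(\rho v)/\rho\in H^1(\T)$. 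Now $\d_x\sqrt\rho\in H^1$ and $\sqrt\rho v\in H^1$, and together with $\phi\in H^1\cap L^\infty$ the algebra property of $H^1(\T)$ gives $\d_x\psi=(\d_x\sqrt\rho+i\sqrt\rho v)\phi\in H^1(\T)$, i.e. $\psi\in H^2(\T)$. Differentiating once more, using $\d_x\phi=iv\phi$ and the identity $2v\,\d_x\sqrt\rho+\sqrt\rho\,\d_x v=\d_x(\rho v)/\sqrt\rho$, one gets
\[
\d_x^2\psi=\left[-\bigl(\sqrt\rho\,v^2-\d_x^2\sqrt\rho\bigr)+i\,\frac{\d_x(\rho v)}{\sqrt\rho}\right]\phi ,
\]
so that, since $|\phi|=1$ a.e., the squared $L^2$ norm splits and
\[
\|\d_x^2\psi\|_{L^2}^2=\|\sqrt\rho\,v^2-\d_x^2\sqrt\rho\|_{L^2}^2+\|\d_x(\rho v)/\sqrt\rho\|_{L^2}^2\le M_2^2 .
\]
Combined with $\|\psi\|_{H^1}\le C(M_1)$ this gives \eqref{eq:H2_lift}, and uniqueness up to a constant phase is inherited from Proposition~\ref{prop:lift1}, since any $H^2$ wave function associated to $(\sqrt\rho,\Lambda)$ is in particular an $H^1$ one.

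The only real work is the bootstrap step — extracting the $L^\infty$ bound on $v$ and then $\sqrt\rho\in H^2(\T)$ from the weighted $L^2$ hypotheses — and this is exactly where the non-vacuum assumption $\rho\ge\delta$ is used, through the one-dimensional embedding $H^1\hookrightarrow L^\infty$ and the algebra property of $H^1(\T)$; one should also carefully justify that $\d_x\phi=iv\phi$ holds weakly and that all product-rule manipulations are legitimate at the stated regularity. In the vacuum-allowing setting of \cite{AMZ1} these points are considerably more delicate, whereas under $\rho\ge\delta$ they reduce to standard one-dimensional Sobolev calculus.
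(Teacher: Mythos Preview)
The paper does not prove this proposition: it is stated in Section~\ref{sect:def} as a preliminary result quoted from \cite{AMZ1} (see also \cite{HaoMinE}), so there is no ``paper's own proof'' to compare against. Your argument is correct and is essentially the natural proof in the non-vacuum setting: lift to $H^1$ via Proposition~\ref{prop:lift1}, use $\rho\ge\delta$ and one-dimensional Sobolev embeddings to bootstrap $v\in L^\infty$, then $\sqrt\rho\in H^2$ and $v\in H^1$, and finally compute $\d_x^2\psi$ explicitly via $\d_x\phi=iv\phi$ to obtain the clean identity
\[
\d_x^2\psi=\Bigl[-\bigl(\sqrt\rho\,v^2-\d_x^2\sqrt\rho\bigr)+i\,\frac{\d_x(\rho v)}{\sqrt\rho}\Bigr]\phi,
\]
from which \eqref{eq:H2_lift} follows immediately. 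This is in fact the approach taken in the cited references, specialised to the strictly positive density case where the polar factor is uniquely determined and smooth; the more delicate arguments in \cite{AMZ1} concern what happens near vacuum, which you correctly flag as irrelevant here.
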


\section{Global well-posedness of GCP solutions}\label{sect:exist}

The global well-posedness Theorem \ref{thm:glob2} of GCP solution to system \eqref{eq:QHD} is proved analysing the existence $H^2$ solutions to the NLS \eqref{eq:NLS} and establishing the relationship between \eqref{eq:QHD} and \eqref{eq:NLS} by Proposition \ref{prop:lift2}.

In the first part of this section, we will focus on the Cauchy problem of NLS \eqref{eq:NLS},
\begin{equation}\label{eq:cauchy_NLS}
\begin{cases}
i\d_t\psi+\frac12\d_x^2\psi=f'(|\psi|^2)\psi+\frac{1}{\tau}S\psi+V\psi\\
-\d_x^2 V=|\psi|^2-\mathcal{C}(x),\quad \psi(0)=\psi_0
\end{cases}
\end{equation}
for initial data $\psi_0\in H^2(\T)$, where $S$ is the phase function of $\psi$ given in Definition \ref{def:phase}. We first prove the local existence of $H^2$ solutions to \eqref{eq:cauchy_NLS} by a standard Picard iteration scheme.

\begin{prop}
Let us assume the initial data $\psi_0\in H^2(\T)$ satisfy $\inf_x|\psi_0|\geq \delta^\frac12 >0$ and 
\begin{equation}\label{eq:ini_1}
\|\d_x\psi_0\|_{L^2_x} \leq M_0^\frac12-\delta M_0^{-\frac12},
\end{equation}
where $M_0=\|\psi_0\|_{L^2_x}^2$. Then the Cauchy problem \eqref{eq:cauchy_NLS} has a unique solution $\psi\in L^\infty(0,T_*;H^2(\T))$ local in time, for some $T_*>0$, with $\inf_{t,x}|\psi|\geq \frac{\delta^\frac12}{2}$ .
\end{prop}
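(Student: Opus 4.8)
The plan is to solve \eqref{eq:cauchy_NLS} by a contraction argument on the Duhamel formulation
\[
\psi(t)=U(t)\psi_0-i\int_0^t U(t-s)\Big[f'(|\psi|^2)\psi+\tfrac1\tau S[\psi]\psi+V[\psi]\psi\Big](s)\,ds ,
\]
where $U(t)=e^{\frac{it}{2}\d_x^2}$ is the free Schr\"odinger group, unitary on every $H^k_x(\T)$; $V[\psi]$ is the mean-zero solution of $-\d_x^2V=|\psi|^2-\mathcal C$ (solvable since $\int_\T|\psi(t)|^2\,dx=\int_\T\mathcal C\,dx=M_0$); and $S[\psi]$ is the phase attached to $\psi$ as in Definition \ref{def:phase}. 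The hypotheses $\inf_x|\psi_0|\ge\delta^{1/2}$ and \eqref{eq:ini_1} place us in the non-vacuum, zero-winding regime: $|\psi_0|$ is bounded below, and by Remark \ref{rem:equivS} the range of $\psi_0$ lies in a simply connected subdomain of $\C\setminus\{0\}$, so that $S[\psi]=\frac{1}{i}\log(\psi/|\psi|)$ is a well-defined periodic function in $H^2_x(\T)$; this is an open condition, stable under small $H^2_x$ perturbations of $\psi_0$. We run the iteration in the complete metric space
\[
X_{T_*}=\Big\{\psi\in C([0,T_*];H^2_x(\T)):\ \sup_{t\le T_*}\|\psi(t)-\psi_0\|_{H^2_x}\le R,\ \inf_{t,x}|\psi|\ge\tfrac12\delta^{1/2}\Big\},
\]
for a fixed small $R>0$ and $T_*>0$ to be chosen.

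The core of the argument is that the map $\Phi[\psi]$ given by the right-hand side of the Duhamel formula sends $X_{T_*}$ into $C([0,T_*];H^2_x)$, and that the nonlinear part $N[\psi]=f'(|\psi|^2)\psi+\frac1\tau S[\psi]\psi+V[\psi]\psi$ is locally Lipschitz on $X_{T_*}$, with constants depending only on $R,\delta,\tau$ and $\|\psi_0\|_{H^2_x}$. This rests on $H^1_x(\T)$ and $H^2_x(\T)$ being Banach algebras and $H^2_x(\T)\hookrightarrow C^1_x(\T)$. For the pressure term, on $X_{T_*}$ the function $|\psi|^2$ takes values in a fixed compact subset of $(0,\infty)$, and since $p\in C^2((0,\infty))$ gives $f\in C^3((0,\infty))$, the Nemytskii map $\psi\mapsto f'(|\psi|^2)\psi$ is smooth $H^2_x\to H^2_x$ by the chain rule and the algebra property. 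For the Poisson term, elliptic regularity controls $V[\psi]$ in a suitable Sobolev scale and, multiplied by $\psi\in H^2_x$, gives $V[\psi]\psi\in H^2_x$ with Lipschitz dependence on $\psi$. The delicate, and least standard, step is the phase term: $\psi\mapsto\psi/|\psi|$ is smooth $H^2_x\to H^2_x$ on the non-vacuum set (algebra property, plus smoothness of $z\mapsto z/|z|$ away from $0$); differentiating $e^{iS}=\psi/|\psi|$ gives $\d_xS=-i\,\overline{(\psi/|\psi|)}\,\d_x(\psi/|\psi|)\in H^1_x$, hence $S\in H^2_x$; and the Lipschitz estimate follows by writing $S[\psi_1]-S[\psi_2]=\frac{1}{i}\log\!\big((\psi_1/|\psi_1|)\,\overline{(\psi_2/|\psi_2|)}\big)$ as a principal logarithm of a quantity $H^2_x$-close to $1$, the branch being unambiguous because $\psi_1(0)=\psi_2(0)=\psi_0$. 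I expect this nonlocal phase term to be the main obstacle, since $S[\psi]$ is a primitive of $\psi/|\psi|$ rather than a pointwise function of $\psi$; the remaining terms are routine.

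Given these bounds, the scheme closes in the usual way. For the self-mapping property, $\|\Phi[\psi](t)-\psi_0\|_{H^2_x}\le\|(U(t)-\mathrm{Id})\psi_0\|_{H^2_x}+t\sup_{s\le t}\|N[\psi](s)\|_{H^2_x}$; the first term tends to $0$ as $t\to0$ by strong continuity of $U(\cdot)$ on $H^2_x$, while the second is $\le T_*\,C(R,\delta,\tau,\|\psi_0\|_{H^2_x})$, so both are small for $T_*$ small, and since this also bounds $\|\Phi[\psi](t)-\psi_0\|_{L^\infty_x}$ up to a Sobolev constant, the constraint $\inf_{t,x}|\Phi[\psi]|\ge\frac12\delta^{1/2}$ and the zero-winding property are preserved; hence $\Phi(X_{T_*})\subset X_{T_*}$. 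For the contraction, $\|\Phi[\psi_1](t)-\Phi[\psi_2](t)\|_{H^2_x}\le t\sup_{s\le t}\|N[\psi_1](s)-N[\psi_2](s)\|_{H^2_x}\le T_*L\,\|\psi_1-\psi_2\|_{X_{T_*}}$, so for $T_*<1/L$ Banach's theorem gives a unique fixed point $\psi\in X_{T_*}$, that is, a solution $\psi\in C([0,T_*];H^2_x)\subset L^\infty(0,T_*;H^2_x)$ with $\inf_{t,x}|\psi|\ge\frac12\delta^{1/2}$, and from the equation $\d_t\psi=\frac{i}{2}\d_x^2\psi-iN[\psi]\in C([0,T_*];L^2_x)$. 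Uniqueness in the whole solution class (not merely in the ball) follows from a Gronwall estimate for the difference of two solutions in $L^\infty_tL^2_x$, using the same local Lipschitz bounds.
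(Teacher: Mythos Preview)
Your approach is correct but genuinely different from the paper's. Three points of divergence are worth noting.

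First, the paper does not run a direct contraction on the full nonlinear Duhamel map. Instead it builds a Picard sequence $\{\psi_n\}$ where each $\psi_n$ solves the \emph{linear} Schr\"odinger equation $i\d_t\psi_n+\tfrac12\d_x^2\psi_n=[f'(|\psi_{n-1}|^2)+\tau^{-1}S_{n-1}+V_{n-1}]\psi_n$ with potentials frozen at step $n-1$, and then proves uniform $L^\infty_tH^2_x$ bounds and contraction for the pair $(\psi_n,S_n)$.

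Second, and more substantially, the paper never invokes the logarithm formula for the phase. It defines $S_n$ by the integral representation \eqref{eq:defS_n}, the iterative analogue of Definition \ref{def:phase}, built from $v_n$, the chemical potential $\mu_n$, and the previous phase $S_{n-1}$. This is heavier than your Nemytskii-type treatment of $\psi\mapsto\tfrac1i\log(\psi/|\psi|)$, but it has a payoff: in the limit one sees directly that the phase of the constructed solution agrees with the one produced by Definition \ref{def:phase}, a point used later when matching the NLS and QHD pictures in the proof of Theorem \ref{thm:glob2}. Your branch-fixing remark should say that $\psi_1,\psi_2$ are both $L^\infty_{t,x}$-close to $\psi_0$ (not that $\psi_j(0)=\psi_0$, which is not assumed for generic elements of $X_{T_*}$); with that correction the Lipschitz bound on $S[\cdot]$ goes through.

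Third, the paper does not obtain positivity by $H^2$-closeness to $\psi_0$; it uses the Poincar\'e-type bound $\|\rho_n-M_0\|_{L^\infty_x}\le M_0^{1/2}\|\d_x\psi_n\|_{L^\infty_tL^2_x}$ together with a direct energy-type estimate showing $\|\d_x\psi_n\|_{L^2_x}^2$ grows at most linearly over $[0,T_*]$. This is precisely where hypothesis \eqref{eq:ini_1} enters in the paper's argument; in yours it is used only to secure the zero-winding/simply-connected-range condition. Your route is shorter and relies solely on the algebra structure of $H^2_x(\T)$ and Sobolev embedding; the paper's is more in the spirit of the hydrodynamic framework developed in Section \ref{sect:def}.
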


\begin{proof}
The proof follows a standard Picard iteration argument. The sequence $\{\psi_n\}_{n\geq 1}$ for \eqref{eq:cauchy_NLS} is defined iteratively by solving $\psi_n$ from the linear Schr\"odinger equation 
\begin{equation}\label{eq:QLS_n}
i\d_t\psi_n+\frac12\d_x^2\psi_n=f'(|\psi_{n-1}|^2)\psi_n+\frac{1}{\tau}S_{n-1}\psi_n+V_{n-1}\psi_n
\end{equation}
on $[0,T_*)\times \T$, for small $T_*$ to be determined later, with $\psi_{n-1}$, $S_{n-1}$ and $V_{n-1}$ given in the previous step. The Duhamel formula for \eqref{eq:QLS_n} is written as
\begin{equation}\label{eq:duhamel}
\begin{aligned}
\psi_n=e^{\frac{i}{2}t\d_x^2}\psi_0&-i\int_0^te^{\frac{i}{2}(t-s)\d_x^2}[f'(|\psi_{n-1}|^2)+V_{n-1}]\psi_n(s)ds\\
&-\frac{i}{\tau}\int_0^te^{\frac{i}{2}(t-s)\d_x^2}S_{n-1}\psi_n(s)ds,
\end{aligned}
\end{equation}
and the electric potential $V_{n-1}$ is provided by the Poisson equation, 
\[
-\d_x^2 V_{n-1}=|\psi_{n-1}|^2-\mathcal{C}(x)
\]
with $\int_\T V_{n-1}dx=0$. By Poincar\'e inequality, $V_{n-1}$ is bounded by
\[
\|V_{n-1}\|_{W^{1,p}_x}\leq C(\|\psi_{n-1}\|^2_{L^2_x}+\|\mathcal{C}\|_{L^1_x}),\quad 1\leq p\leq \infty.
\]
For the phase function, we first define $S_0=0$. For $n\geq 1$, we want to give s suitable definition of $S_n$ by adapting the idea of Definition \ref{def:phase} for $\psi_n$ as follows. As before, we define the velocity and chemical potential as
\begin{equation}\label{eq:mu_n}
v_n=\IM\left(\frac{\d_x\psi_n}{\psi_n}\right),\quad \mu_n=-\frac12\RE\left(\frac{\d_x^2\psi_n}{\psi_n}\right)+f'(|\psi_{n-1}|^2)+V_{n-1}.
\end{equation}
and we will see below this definition is consistent by the positivity of $|\psi_n|$. By \eqref{eq:QLS_n} the equation of $v_n$ is given by
\begin{equation}\label{eq:dtv_n}
\d_t v_n=-\d_x\left(\mu_n+\frac{1}{\tau}S_{n-1}\right).
\end{equation}
Thus following the idea of \eqref{eq:gradS}, we define $S_n$ to be the solution to the space-time gradient equation 
\begin{equation}\label{eq:gradS_n}
\begin{cases}
\d_xS_n =v_n,\quad\d_t S_n=-\mu_n-\frac{1}{\tau}S_{n-1} \\
S_n(0,0)=S_*\in [0,2\pi).
\end{cases}
\end{equation}
The "irrotationality" condition \eqref{eq:dtv_n} guarantees the solvability of \eqref{eq:gradS_n}, and following the idea of Definition \ref{def:phase}, the phase function $S_n$ can be explicitly written as
\begin{equation}\label{eq:defS_n}
\begin{aligned}
S_n(t,x)=&S_*+\int_\T\int_0^{x_*}v_0(y)dydx_*\\
&-\int_0^t \int_\T(\mu_n+\tau^{-1}S_{n-1})(s,x_*)dx_*ds+\int_\T\int_{x_*}^x v_n(t,y)dydx_*.
\end{aligned}
\end{equation}

We claim that the sequence $\{\psi_n\}$ satisfies the following properties on $[0,T_*)\times \T$:
\begin{itemize}
\item[(1)] boundedness: $\|\psi_n\|_{L^\infty_tH^2_x}\leq 2\|\psi_0\|_{H^2_x}$ and $\|S_{n}\|_{L^\infty_tH^2_x}\leq C$;
\item[(2)] positivity: $\inf_{t,x}|\psi_n|\geq \frac{\delta^\frac12}{2}$.
\end{itemize}
Here $0<C<\infty$ may depend on $\delta$ and $\|\psi_0\|_{H^2_x}$, but is independent of $n$. By our assumption, $\psi_0$ and $S_0$ obviously satisfy the properties (1) and (2). In this paper we will only focus on the estimates related to the phase function $S_n$ and the positivity property (2), and the remaining analysis following the standard argument for non-linear Schr\"odinger equation.

For the boundedness of $\|\psi_n\|_{L^\infty_tH^2_x}$, we use the Duhamel formula \eqref{eq:duhamel} and consider its $L^\infty_tH^2_x$ norm. The last integral in \eqref{eq:duhamel} can be estimated as
\begin{align*}
\|\int_0^te^{\frac{i}{2}(t-s)\d_x^2}S_{n-1}\psi_n(s)ds\|_{L^\infty_tH^2_x}\leq&  T_* \|S_{n-1}\psi_n \|_{L^\infty_tH^2_x}\\
\leq & T_* \|S_{n-1}\|_{L^\infty_tH^2_x}\|\psi_n \|_{L^\infty_tH^2_x}.
\end{align*}
Thus by the assumption of induction, we have
\[
\frac{1}{\tau}\|\int_0^te^{\frac{i}{2}(t-s)\d_x^2}S_{n-1}\psi_n(s)ds\|_{L^\infty_tH^2_x}\leq \frac{CT_*}{\tau} \|\psi_n \|_{L^\infty_tH^2_x},
\]
and we choose $T_*$ small such that $\frac{CT_*}{\tau}\leq \frac14$. Similarly, by Sobolev inequalities, the second integral in the right hand side of \eqref{eq:duhamel} can be bounded by
\begin{align*}
\|\int_0^te^{\frac{i}{2}(t-s)\d_x^2}[f'(|\psi_{n-1}|^2)&+V_{n-1}]\psi_n(s)ds\|_{L^\infty_tH^2_x}\\
\leq & T_*\|f'(|\psi_{n-1}|^2)+V_{n-1}\|_{L^\infty_{t,x}}\|\psi_n\|_{L_t^\infty H^2_x}\\
\leq & CT_*\|\psi_n\|_{L_t^\infty H^2_x}\leq \frac14\|\psi_n\|_{L_t^\infty H^2_x},
\end{align*}
which implies
\[
\|\psi_n\|_{L_t^\infty H^2_x}\leq \|\psi_0\|_{H^2_x}+\frac12\|\psi_n\|_{L_t^\infty H^2_x},
\]
namely the first part of property (1). 

Now we prove the positivity property (2). Let us define the density $\rho_n=|\psi_n|^2$, then from \eqref{eq:QLS_n} we obtain the equation of $\rho_n$ as
\[
\d_t\rho_n+\d_x (\rho_n v_n)=0.
\]

As a consequence, the total mass (and the average density since we assume $|\T|=1$) $M_n(t)=\int_\T\rho_n dx\equiv M_0$ is conserved. By Poincar\'e inequality,
\begin{equation}\label{eq:poincare}
\|\rho_n-M_0\|_{L^\infty_{t,x}}\leq \frac12\|\d_x\rho_n\|_{L^\infty_tL^1_x}\leq M_0^\frac12\|\d_x\psi_n\|_{L^\infty_tL^2_x},
\end{equation}
therefore 
\[
\inf_{t,x}|\psi_n|^2=\inf_{t,x}\rho_n\geq \frac{\delta}{4}
\]
if we have
\begin{equation}\label{eq:bd_pc}
\|\d_x\psi_n\|_{L^\infty_tL^2_x}\leq M_0^\frac12-\frac{\delta}{4}M_0^{-\frac12}.
\end{equation}
Then we consider the norm $\|\d_x\psi_n\|_{L^\infty_tL^2_x}$. Multiplying \eqref{eq:QLS_n} by $2\d_x^2\bar\psi_n$ and taking the imaginary part, it follows that
\begin{align*}
\frac{d}{dt}\int_\T|\d_x\psi_n|^2dx=&2\int_\T[f'(|\psi_{n-1}|^2)+\tau^{-1}S_{n-1}+V_{n-1}]\IM(\psi_n\d_x^2\bar\psi_n)dx\\
\leq & 2\|[f'(|\psi_{n-1}|^2)+\tau^{-1}S_{n-1}+V_{n-1}]\|_{L^\infty_{t,x}}\|\psi_n\|_{L^\infty_{t,x}}\|\d_x^2\psi_n\|_{L^\infty_tL^2_x}.
\end{align*}
By using the previous estimates of $f'(|\psi_{n-1}|^2)$, $S_{n-1}$, $V_{n-1}$ and property (1), it gives
\[
\frac{d}{dt}\int_\T|\d_x\psi_n|^2dx\leq \frac{C}{\tau}.
\]
Last, by using the initial condition \eqref{eq:ini_1} we obtain
\[
\int_\T|\d_x\psi_n|^2(t)dx\leq \int_\T|\d_x\psi_0|^2+\frac{CT_*}{\tau}\leq (M_0^\frac12-\delta M_0^{-\frac12})^2+\frac{CT_*}{\tau},
\]
which allows us to choose $T_*$ small such that \eqref{eq:bd_pc} holds. Thus we prove property (2). 

Last, we prove the remaining part of property (1), namely $\|S_n\|_{L^\infty_tH^2_x}\leq C$. By $\d_xS_n=v_n$, we only need to control $\|S_n\|_{L^\infty_tL^2_x}$ and $\|\d_x v_n\|_{L^\infty_tL^2_x}$.
From \eqref{eq:defS_n} we obtain
\[
|S_n(t,x)|\leq S_*+ \int_\T|v_0|dy+\int_0^t\int_T(|\mu_n|+\tau^{-1}|S_{n-1}|)dx_*ds+\int_\T|v_n|dy.
\]
Recalling that $v_n=\IM(\frac{\d_x\psi_n}{\psi_n})$, by the bound of $\psi_n$ and (2) it implies
\[
\int_\T|v_n|dy\leq 2\delta^{-\frac12}\int_\T |\d_x\psi_n|dy\leq \frac{C}{4}.
\]
Same bound holds for $\int_\T|v_0|dy$. By the assumption of induction we have $\|S_{n-1}\|_{L^\infty_tL^2_x}\leq C$, therefore
\[
\int_0^t\int_T\tau^{-1}|S_{n-1}|dx_*ds\leq \frac{C}{4}
\]
if we choose $T_*\leq \frac{\tau}{4}$. Last, by \eqref{eq:mu_n} we have
\[
\int_0^t\int_\T|\mu_n|dx_*ds\leq 2\delta^{-\frac12}\int_0^t\int_\T|\d_x^2\psi_n|dx_*ds+\int_0^t\int_\T|f'(|\psi_{n-1}|^2)|dx_*ds\leq \frac{C}{4},
\]
where in the last inequality we use $f\in C^2([0,\infty))$ and 
\[
|f'(|\psi_{n-1}|^2)|\leq C(\|f\|_{C^2},\|\psi_{n-1}\|_{L^\infty_{t,x}})\leq C(\|f\|_{C^2},\|\psi_{n-1}\|_{L^\infty_{t}H^2_x}).
\]
Thus we prove $\|S_n\|_{L^\infty_{t,x}}\leq C$. On the other hand, again by the definition of $v_n$, we have
\[
\|\d_xv_n\|_{L_t^\infty L^2_x}\leq 2\delta^{-\frac12}\|\d_x^2\psi_n\|_{L_t^\infty L^2_x}+2\delta^{-\frac12}\|(\d_x\psi_n)^2\|_{L_t^\infty L^2_x},
\]
where by embedding inequality,
\[
\|(\d_x\psi_n)^2\|_{L_t^\infty L^2_x}\leq \|\psi_n\|_{L_t^\infty H^2_x}^2\leq 4\|\psi_0\|_{H^2_x}^2\leq C.
\]
Thus we finish the proof of property (1).

Now we need to show $\{(\psi_n,S_n)\}$ is a contractive sequence, which is proved by a analogous argument as property (1) by considering the difference of \eqref{eq:duhamel} and its $L^\infty_tH^2_x$ norm. Since the estimates are standard and almost repetitive, we will not give the details.

\begin{lem}\label{lem:contr}
There exists $0<q<1$, such that the sequence $\{(\psi_n,S_n)\}$ satisfies
\[
\|\psi_n-\psi_{n-1}\|_{L^\infty_tH^2_x}+\|S_{n}-S_{n-1}\|_{L^\infty_tH^2_x}\leq Cq^{n-1}.
\]
\end{lem}

Lemma \ref{lem:contr} imply $\{(\psi_n,S_n)\}$ converges strongly to $(\psi,S)$ in $L^\infty([0,T_*),H^2(\T))$. Moreover by the convergence and the uniform lower bound of $|\psi_n|$, we have
\[
v_n=\IM\left(\frac{\d_x\psi_n}{\psi_n}\right)\to v=\IM\left(\frac{\d_x\psi}{\psi}\right)
\]
\[
-\d_x^2 V_n=|\psi_n|^2-\mathcal{C}(x)\to-\d_x^2 V=|\psi|^2-\mathcal{C}(x)
\]
and
\[
\mu_n=-\frac12\RE\left(\frac{\d_x^2\psi_n}{\psi_n}\right)+f'(|\psi_{n-1}|^2)+V_n\to -\frac12\RE\left(\frac{\d_x^2\psi}{\psi}\right)+f'(|\psi|^2)+V=\mu.
\] 
Also we can rewrite \eqref{eq:defS_n} as
\begin{align*}
S_n(t,x)=&e^{-\frac{t}{\tau}}\left(S_*+\int_\T\int_0^{x_*}v_0(y)dydx_*\right)\\
&-\int_0^t \int_\T e^{\frac{s-t}{\tau}}[\mu_n+\tau^{-1}(S_{n-1}-S_n)](s,x_*)dx_*ds+\int_\T\int_{x_*}^x v(t,y)dydx_*,
\end{align*}
then by passing to the limit we see $S$ satisfies \eqref{eq:defS}. Thus we prove $(\psi,S)$ is a solution to the Cauchy problem \eqref{eq:cauchy_NLS} on $[0,T_*)\times\T$, and the continuity in time follows directly from the Duhamel formula.

The proof of uniqueness follows a standard argument for Schr\"odinger equation by considering the difference equation, similar to Lemma \ref{lem:contr} of contraction.
\end{proof}

Let $T^*>0$ be the maximal time of existence of the solution $\psi\in L^\infty([0,T^*);H^2(\T))$ to the Cauchy problem \eqref{eq:cauchy_NLS}, such that $|\psi|$ is strictly positive. Then, it is possible to write $\psi=\sqrt{\rho}e^{iS}$, for some real-valued functions $\rho, S$. Consequently, the hydrodynamic functions associated to $\psi$ are given by $\rho=|\psi|^2$ and $v=\d_xS$, with $J=\IM(\bar\psi\d_x\psi)=\rho v$, and moreover
\[
|\d_x\psi|^2=(\d_x\sqrt\rho)^2+\rho v^2.
\]
Recalling the definitions \eqref{eq:chem} and \eqref{eq:sigma} for the chemical potential $\mu$ and the quantity $\sigma$, and the above definitions of the hydrodynamic variables associated to $\psi$, we may check that it is possible to identify
\begin{equation}\label{eq:mu_psi}
\mu=-\frac12\RE\left(\frac{\d_x^2\psi}{\psi}\right)+f'(|\psi|^2)+V,\quad\sigma=-\frac12\IM\left(\frac{\d_x^2\psi}{\psi}\right).
\end{equation}
This in turn implies that the higher order functional may be equivalently written in terms of $\psi$ as follows
\begin{equation}\label{eq:high_psi}
I(t)=\frac12\int_{\mathbb T}
\left|-\frac12\d_x^2\psi+f'(|\psi|^2)\psi+V\psi\right|^2\,dx.
\end{equation}
Therefore bounds on $I(t)$ imply $H^2-$estimates for $\psi$.

To prove $\psi$ exists globally, namely $T^*=\infty$, we need the following propositions providing the a priori estimates of $\psi$, and for later use we will write these estimates in term of the hydrodynamic quantities. Despite the fact that it is not necessary for the purpose of our analysis, in the Appendix we provide a formal calculation that derives identity \eqref{eq:rl_I} below in a purely hydrodynamic way.

\begin{prop}\label{prop:3.2}
The $\psi\in L^\infty([0,T^*);H^2(\T))$ be a solution to the Cauchy problem \eqref{eq:cauchy_NLS}, such that $\inf_{t,x}|\psi|>0$. Then, for any $t\in[0,T^*)$, $\psi$ satisfies the following properties:
\begin{itemize}
\item[(1)] the total mass is conserved
\begin{equation}\label{eq:cons_mass}
M(t)=\int_\T\rho(t,x) dx\equiv M_0,
\end{equation}
and the total energy, given by \eqref{eq:en}, dissipates 
\begin{equation}\label{eq:en_disp}
E(t)+\frac{1}{\tau}\int_0^t\int_\T \rho v^2 dxds=E(0)=E_0;
\end{equation}
\item[(2)] if the initial data satisfies
\begin{equation}\label{eq:encr}
E_0\leq \frac12(M_0^\frac12-\delta M_0^{-\frac12})^2,
\end{equation}
for some $\delta>0$, where $M_0=\int_\T\rho_0 dx$ is the initial total mass, then 
\[
\inf_{t,x}\rho\geq \delta\quad \textrm{on }[0,T^*)\times\T;
\]

\item[(3)] The higher order functional $I(t)$, defined by \eqref{eq:higher}, is differentiable for almost every $t\in [0,T^*)$ and satisfies
\begin{equation}\label{eq:rl_I}
\frac{d}{dt} I(t)+\frac{1}{\tau}\int_\T\rho \sigma^2 dx=\int_\T \mu\d_tp(\rho)dx+\int_\T\rho\mu \d_tVdx-\frac{1}{\tau}\int_\T\rho v^2\mu dx,
\end{equation}
where $p(\rho)$ is the pressure given by \eqref{eq:pres}.
\end{itemize}
\end{prop}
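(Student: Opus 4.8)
\noindent\emph{Overall strategy and parts (1)--(2).}
Since $\inf_{t,x}|\psi|>0$, the Madelung ansatz $\psi=\sqrt\rho\,e^{iS}$ is globally valid with $S$ depending smoothly on $\psi$, so the plan is to derive every identity on the Schr\"odinger side and read it off in hydrodynamic variables through the polar factorization identities of Section~\ref{sect:def} and \eqref{eq:mu_psi}. Multiplying \eqref{eq:cauchy_NLS} by $\bar\psi$ and taking imaginary parts gives $\d_t\rho+\d_xJ=0$ with $\rho=|\psi|^2$, $J=\IM(\bar\psi\d_x\psi)=\rho v$, which integrated over $\T$ yields \eqref{eq:cons_mass}. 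For \eqref{eq:en_disp} I multiply \eqref{eq:cauchy_NLS} by $\overline{\d_t\psi}$, integrate over $\T$ and take real parts; the regularity $\psi\in\mathcal C([0,T^*);H^2_x)$ together with $\d_t\psi\in\mathcal C([0,T^*);L^2_x)$ (read off the equation) makes all the pairings legitimate and, by a standard Lions--Magenes-type lemma, $t\mapsto\|\d_x\psi(t)\|_{L^2}^2$ absolutely continuous, yielding $\frac{d}{dt}\int_\T\frac12|\d_x\psi|^2\,dx=-\int_\T\big(f'(\rho)+\tfrac1\tau S+V\big)\d_t\rho\,dx$. On the right-hand side the $f'(\rho)$ and $V$ terms are $-\frac{d}{dt}\int_\T f(\rho)\,dx$ and $-\frac{d}{dt}\int_\T\frac12(\d_xV)^2\,dx$ (the latter via the Poisson equation and $\d_t\mathcal C=0$), while the damping term equals $-\frac1\tau\int_\T S\,\d_t\rho\,dx=\frac1\tau\int_\T S\,\d_x(\rho v)\,dx=-\frac1\tau\int_\T\rho v^2\,dx$ after using $\d_xS=v$ and an integration by parts on $\T$; recalling $|\d_x\psi|^2=(\d_x\sqrt\rho)^2+\rho v^2$ gives \eqref{eq:en_disp}. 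For part (2), \eqref{eq:en_disp} yields $E(t)\le E_0$, hence $\|\d_x\sqrt\rho(t)\|_{L^2}^2\le 2E_0\le(M_0^{\frac12}-\delta M_0^{-\frac12})^2$ by \eqref{eq:encr}; the oscillation estimate on $\T$ combined with Cauchy--Schwarz (as in \eqref{eq:poincare}) gives $\|\rho(t)-M_0\|_{L^\infty_x}\le\frac12\|\d_x\rho(t)\|_{L^1_x}\le M_0^{\frac12}\|\d_x\sqrt\rho(t)\|_{L^2_x}\le M_0-\delta$, so $\rho\ge M_0-(M_0-\delta)=\delta$ on $[0,T^*)\times\T$.

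\noindent\emph{The functional $I(t)$ as a Schr\"odinger quantity (3).}
Introduce the self-adjoint operator $H:=-\frac12\d_x^2+f'(|\psi|^2)+V$ with real potential. By \eqref{eq:mu_psi} and $\rho=\bar\psi\psi$ one has $\rho(\mu+i\sigma)=\bar\psi\,H\psi$, whence the integrand in \eqref{eq:higher} satisfies $\frac\rho2(\mu^2+\sigma^2)=\frac1{2\rho}|\bar\psi H\psi|^2=\frac12|H\psi|^2$, i.e. $I(t)=\frac12\|H\psi(t)\|_{L^2}^2$. This shows $I$ is finite and continuous; its absolute continuity follows, again from the equation, from $H\psi\in\mathcal C([0,T^*);L^2_x)$ and $\d_t(H\psi)$ bounded in $H^{-2}_x$. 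Differentiating and substituting $\d_t\psi=-iH\psi-\frac i\tau S\psi$ from \eqref{eq:cauchy_NLS} gives
\[
\frac{d}{dt}I(t)=\RE\langle H\psi,(\d_tH)\psi\rangle-\RE\langle H\psi,iH^2\psi\rangle-\frac1\tau\RE\langle H\psi,iH(S\psi)\rangle .
\]
The middle term vanishes since $\langle H\psi,H^2\psi\rangle\in\R$ by self-adjointness of $H$, so the conservative part of the flow contributes nothing to $\frac{d}{dt}I$. For the first term, $(\d_tH)\psi=(f''(\rho)\d_t\rho+\d_tV)\psi$ and $\RE(\overline{H\psi}\,\psi)=\rho\mu$, so, using $p'(\rho)=\rho f''(\rho)$ from \eqref{eq:pres}, it equals $\int_\T\mu\,\d_tp(\rho)\,dx+\int_\T\rho\mu\,\d_tV\,dx$.

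\noindent\emph{The damping contribution.}
This is where the collision structure enters, and it is the computation I expect to carry the weight of the argument. Using $\d_xS=v$, expand $H(S\psi)=S\,H\psi-\frac12(\d_xv)\psi-v\,\d_x\psi$. Then, from $\IM(\overline{H\psi}\,\psi)=-\rho\sigma$, $\IM(\overline{H\psi}\,\d_x\psi)=\rho\mu v-\frac12\sigma\,\d_x\rho$, and $\langle H\psi,S\,H\psi\rangle=\int_\T S|H\psi|^2\,dx\in\R$, one obtains
\[
-\frac1\tau\RE\langle H\psi,iH(S\psi)\rangle=\frac1\tau\IM\langle H\psi,H(S\psi)\rangle=-\frac1\tau\int_\T\rho v^2\mu\,dx+\frac1{2\tau}\int_\T\sigma\,\d_x(\rho v)\,dx ,
\]
and since $\d_x(\rho v)=-\d_t\rho=-2\rho\sigma$ by the continuity equation and \eqref{eq:sigma}, the last integral equals $-\frac1\tau\int_\T\rho\sigma^2\,dx$. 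Collecting the three contributions and moving $\frac1\tau\int_\T\rho\sigma^2\,dx$ to the left-hand side yields exactly \eqref{eq:rl_I}, with the two pressure/potential terms coming from $(\d_tH)\psi$ and the collision term splitting into the dissipation $\frac1\tau\int_\T\rho\sigma^2\,dx$ and the source $-\frac1\tau\int_\T\rho v^2\mu\,dx$.

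\noindent\emph{Main difficulty.}
The quantities manipulated in part (3) formally involve up to four $x$-derivatives of $\psi$, whereas only $H^2_x$ regularity is available; the technical heart of the proof is to rewrite each such quantity -- as already done above for $H(S\psi)$ and in the closing chain of identities -- so that at most two derivatives ever fall on $\psi$, after which every pairing is meaningful for $\psi\in\mathcal C([0,T^*);H^2_x)\cap\mathcal C^1([0,T^*);L^2_x)$. The differentiation of $I(t)$ is then justified either directly by this bookkeeping or by mollifying in $x$, differentiating, and passing to the limit, and finally one recovers the pointwise a.e.\ identity \eqref{eq:rl_I} from its integrated (absolutely continuous) form.
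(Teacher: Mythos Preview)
Your argument is correct. Parts (1)--(2) match the paper's up to the cosmetic choice of test function (the paper multiplies \eqref{eq:cauchy_NLS} by $-\d_x^2\bar\psi$ rather than $\overline{\d_t\psi}$, which amounts to the same computation). Part (3), however, is organised genuinely differently from the paper's proof, and it is worth recording the contrast.

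The paper never writes $I(t)=\tfrac12\|H\psi\|_{L^2}^2$. Instead it mollifies, multiplies \eqref{eq:cauchy_NLS} by $\d_x^4\bar\psi_\eps$, and obtains an identity for $\tfrac{d}{dt}\|\d_x^2\psi_\eps\|_{L^2}^2$; it then expands $|\d_x^2\psi|^2=\rho^{-1}|\bar\psi\d_x^2\psi|^2$, completes the square against $2W\rho$ to isolate $4I(t)$, and spends the bulk of the proof computing the resulting correction terms one by one before passing $\eps\to0$. Your route is shorter and more structural: once you observe $\rho(\mu+i\sigma)=\bar\psi H\psi$ and hence $I(t)=\tfrac12\|H\psi\|_{L^2}^2$, self-adjointness of $H$ kills the Hamiltonian contribution $\RE\langle H\psi,iH^2\psi\rangle$ in one line, the $(\d_tH)\psi$ term produces the pressure and potential integrals directly, and the damping term, via the commutator-type expansion $H(S\psi)=SH\psi-\tfrac12(\d_xv)\psi-v\d_x\psi$, yields exactly the dissipation $\tfrac1\tau\int\rho\sigma^2$ and the source $-\tfrac1\tau\int\rho v^2\mu$. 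What the paper's approach buys is that the starting identity $\tfrac{d}{dt}\|\d_x^2\psi_\eps\|_{L^2}^2=\cdots$ is manifestly an $L^2$ pairing after one integration by parts, so the mollification is transparent; in your approach the formal term $\langle H\psi,H^2\psi\rangle$ is an $L^2$--$H^{-2}$ pairing and you must argue (as you note in the last paragraph) either by moving derivatives so that at most two fall on any factor, or by mollifying and passing to the limit. Both fixes work here and lead to the same endpoint; your version makes the cancellation mechanism more visible.
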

\begin{proof}
By equation \eqref{eq:cauchy_NLS}, it is straightforward to check the density $\rho=|\psi|^2$ satisfies the continuity equation
\[
\d_t\rho+\d_xJ=0,
\]
which directly implies the conservation of the total mass. To prove the energy relation \eqref{eq:en_disp}, let us multiply equation \eqref{eq:cauchy_NLS} by $-\d_x^2\bar\psi$, take the imaginary part and integrate by parts, 
\[
\frac12\frac{d}{dt}\int_\T|\d_x\psi|^2dx=-\int_\T[f'(\rho)+\tau^{-1}S+V]\IM(\psi\d_x^2\bar\psi)dx,
\]
where
\[
\IM(\psi\d_x^2\bar\psi)=\d_x\IM(\psi\d_x\bar\psi)=-\d_x(\rho v)=\d_t\rho.
\]
Thus we have
\begin{align*}
\frac{d}{dt}\int_\T\frac12|\d_x\psi|^2+f(\rho)dx=&\tau^{-1}\int_\T S\d_x(\rho v) dx-\int_\T V\d_t\rho dx\\
=&-\tau^{-1}\int_\T\rho v^2 dx+\int_\T V\d_t(\d_x^2 V-\mathcal{C}(x))dx\\
=&-\tau^{-1}\int_\T\rho v^2 dx-\frac{d}{dt}\int_\T\frac12(\d_xV)^2dx.
\end{align*}

Property (2) directly follows the Poincar\'e inequality \eqref{eq:poincare} and property (1),
\[
\|\rho-M_0\|_{L^\infty_{t,x}}\leq \frac12\|\d_x\rho\|_{L^\infty_tL^1_x}\leq M_0^\frac12\|\d_x\psi\|_{L^\infty_tL^2_x}\leq (2M_0E_0)^\frac12.
\]
Therefore if \eqref{eq:encr} holds, it follows
\[
\|\rho-M_0\|_{L^\infty_{t,x}}\leq M_0-\delta,
\]
which implies property (2).

In order to prove property (3), we first show for $\psi\in L^\infty_tH^2_x$ with $\inf_{t,x}|\psi|> 0$, the following identity holds, 
\begin{equation}\label{eq:prop_20_1}
\begin{aligned}
\frac12\frac{d}{dt}\int_\T\rho^{-1}[\RE(\bar\psi\d_x^2\psi)-&2W\rho)^2+\IM(\bar\psi\d_x^2\psi)^2]dx\\
=&-\int_\T\IM[\d_x^2(W\bar\psi+\tau^{-1}S\bar\psi)\d_x^2\psi]dx\\
&+2\int_\T [\rho\d_t(W^2)+W^2\d_t \rho ]dx\\
&-2\int_\T \d_tW\RE(\bar\psi\d_x^2\psi)dx-4\int_\T W\RE(\d_t\bar\psi\d_x^2\psi)dx\\
&-4\int_\T \d_x W\RE(\d_x\bar\psi\d_t\psi)dx-2\int_\T \d_x^2W\RE(\bar\psi\d_t\psi)dx.
\end{aligned}
\end{equation}
where we define the potential $W=f'(\rho)+V$. However, to establish this identity rigorously, we need to introduce the standard sequence of mollifiers $\{\chi_\eps\}_{\eps>0}$ on $\T$ and let $g_\eps=g\ast \chi_\eps$ for functions $g\in L^2(\T)$. We mollify equation \eqref{eq:cauchy_NLS}, multiply it by $\d_x^4\bar\psi_\eps$ and integrate by parts in the imaginary part, which gives
\begin{equation}\label{eq:3.16}
\frac12\frac{d}{dt}\int_\T|\d_x^2\psi_\eps|^2dx=-\int_\T\IM[\d_x^2(W\bar\psi+\tau^{-1}S\bar\psi)_\eps\d_x^2\psi_\eps]dx,
\end{equation}
where the function $S$ is defined by \eqref{eq:defS}. Since $\psi\in L^\infty_tH^2_x$ and $\inf_{t,x}|\psi|> 0$, it is easy to check 
\[
W\bar\psi+\tau^{-1}S\bar\psi\in L^\infty_tH^2_x.
\]
Now we expand the integrand in the left hand side of \eqref{eq:3.16} as
\begin{align*}
|\d_x^2\psi_\eps|^2=|\psi_\eps|^{-2}|\bar\psi_\eps\d_x^2\psi_\eps|^2=\rho_\eps^{-1}[\RE(\bar\psi_\eps\d_x^2\psi_\eps)^2+\IM(\bar\psi_\eps\d_x^2\psi_\eps)^2],
\end{align*}
where  we let $\rho_\eps=|\psi_\eps|^2$, which is different from $\rho\ast\chi_\eps$. To match the chemical potential \eqref{eq:mu_psi}, we write
\begin{align*}
[\RE(\bar\psi_\eps\d_x^2\psi_\eps)]^2=&[\RE(\bar\psi_\eps\d_x^2\psi_\eps)-2W_\eps\rho_\eps]^2\\
&-4W_\eps^2\rho_\eps^2+4W_\eps\rho_\eps\RE(\bar\psi_\eps\d_x^2\psi_\eps).
\end{align*}
thus we have
\begin{align*}
\frac12\frac{d}{dt}\int_\T|\d_x^2\psi_\eps|^2dx=&\frac12\frac{d}{dt}\int_\T \rho_\eps^{-1}[\RE(\bar\psi_\eps\d_x^2\psi_\eps)-2W_\eps\rho_\eps]^2dx\\
&+\frac12\frac{d}{dt}\int_\T \rho_\eps^{-1}\IM(\bar\psi_\eps\d_x^2\psi_\eps)^2dx\\
&+2\frac{d}{dt}\int_\T [W_\eps\RE(\bar\psi_\eps\d_x^2\psi_\eps)-W_\eps^2\rho_\eps]dx.
\end{align*}
Now we compute the time derivative of the last integral,
\begin{align*}
\frac{d}{dt}\int_\T W_\eps\RE(\bar\psi_\eps\d_x^2\psi_\eps)dx=&\int_\T \d_tW_\eps\RE(\bar\psi_\eps\d_x^2\psi_\eps)dx+\int_\T W_\eps\RE(\d_t\bar\psi_\eps\d_x^2\psi_\eps)dx\\
&+\int_\T W_\eps\RE(\bar\psi_\eps\d_x^2\d_t\psi_\eps)dx\\
=&\int_\T \d_tW_\eps\RE(\bar\psi_\eps\d_x^2\psi_\eps)dx+2\int_\T W_\eps\RE(\d_t\bar\psi_\eps\d_x^2\psi_\eps)dx\\
&+2\int_\T \d_xW_\eps\RE(\d_x\bar\psi_\eps\d_t\psi_\eps)dx+\int_\T \d_x^2W_\eps\RE(\bar\psi_\eps\d_t\psi_\eps)dx,
\end{align*}
and
\[
\frac{d}{dt}\int_\T W_\eps^ 2\rho_\eps dx=\int_\T [2\d_t (W_\eps^2)\rho_\eps+W_\eps^2\d_t \rho_\eps] dx.
\]
Since we have $\psi\in L^\infty_tH^2_x$, which implies $\d_t\psi\in L^\infty_tL^2_x$,  the mollified sequences converge strongly in the same topologies respectively, and the integrals are uniformly bounded with respect to $\eps$. Thus we can pass to the limit as $\eps\to 0$, which gives \eqref{eq:prop_20_1}.

In order to reduce \eqref{eq:prop_20_1} to \eqref{eq:rl_I}, it follows \eqref{eq:mu_psi} that
\[
\RE(\bar\psi\d_x^2\psi)=-2\rho\mu+2\rho W,\quad \IM(\bar\psi\d_x^2\psi)=2\rho\sigma,
\]
therefore 
\[
\frac12\frac{d}{dt}\int_\T\rho^{-1}[(\RE(\bar\psi\d_x^2\psi)-2\rho W)^2+\IM(\bar\psi\d_x^2\psi)^2]dx=4\frac{d}{dt}I(t).
\]
We further compute that
\begin{align*}
-\IM[\d_x^2(W\bar\psi+\tau^{-1}S\bar\psi)\d_x^2\psi]=&-[\d_x^2W+\tau^{-1}\d_x v]\IM(\bar\psi\d_x^2\psi)\\
&-2[\d_xW+\tau^{-1}v]\IM(\d_x\bar\psi\d_x^2\psi),
\end{align*}
where we use $\d_x S=v$, and 
\[
-2\d_t W\RE(\bar\psi\d_x^2\psi)=4\d_t W\rho\mu-2\rho\d_t (W^2).
\]
By equation \eqref{eq:cauchy_NLS}, we also have
\[
-4W\RE(\d_t\bar\psi\d_x^2\psi)=4W(W+\tau^{-1}S)\IM(\bar\psi\d_x^2\psi),
\]
\[
-4\d_x W\RE(\d_x\bar\psi\d_t\psi)=2\d_xW\IM(\d_x\bar\psi\d_x^2\psi)-4\d_xW(W+\tau^{-1}S)\IM(\bar\psi\d_x\psi),
\]
and
\[
-2\d_x^2W\RE(\bar\psi\d_t\psi)=\d_x^2W\IM(\bar\psi\d_x^2\psi).
\]
Notice that
\[
\IM(\bar\psi\d_x^2\psi)=\d_x\IM(\bar\psi\d_x\psi)=\d_x(\rho v)=-\d_t\rho,
\]
then by summarising the identities above and integrating by parts, we obtain
\begin{align*}
4\frac{d}{dt}I(t)=&4\int_\T\rho\mu\d_tWdx-\tau^{-1}\int_T\d_xv\d_x(\rho v)dx\\
&+4\tau^{-1}\int_\T \rho v^2Wdx-2\tau^{-1}\int_\T v\IM(\d_x\bar\psi\d_x^2\psi)dx.
\end{align*}
Last, by using
\begin{align*}
\IM(\d_x\bar\psi\d_x^2\psi)=&\rho\IM\left(\frac{\d_x\bar\psi}{\bar\psi}\frac{\d_x^2\psi}{\psi}\right)\\
=&\rho\left[\IM\left(\frac{\d_x\bar\psi}{\bar\psi}\right)\RE\left(\frac{\d_x^2\psi}{\psi}\right)+\RE\left(\frac{\d_x\bar\psi}{\bar\psi}\right)\IM\left(\frac{\d_x^2\psi}{\psi}\right)\right]\\
=&\rho[v(2\mu-2W)+\frac12\rho^{-2}\d_x\rho\d_x(\rho v)],
\end{align*}
it follows that
\[
\frac{d}{dt}I(t)=\int_\T\rho\mu\d_tWdx-\tau^{-1}\int_\T\rho v^2\mu dx-\frac{1}{4\tau}\int_\T \frac{[\d_x(\rho v)]^2}{\rho}dx,
\]
which proves \eqref{eq:rl_I} by using \eqref{eq:pres} and
\[
\frac{[\d_x(\rho v)]^2}{\rho}=\frac{(\d_t\rho)^2}{\rho}=4\rho\sigma^2.
\]
\end{proof}


The next proposition provides the boundedness of the functionals $I(t)$ up to any finite time $0\leq t<T^*$. Moreover we will see in Section \ref{sect:rs} that these estimates remain uniform in $\tau$ after the scaling \eqref{eq:rs_intro}, and play essential roles when we consider the relaxation-time limit. Indeed, as $\tau\to 0$, the time-dependent constant remains uniformly bounded on the time interval of length $\mathcal{O}(\tau^{-1})$.

\begin{prop}\label{prop:bdI}
Let $\psi\in L^\infty(0,T^*;H^2(\T))$ be as given in Proposition \ref{prop:3.2}. Then for any $0\leq t<T^*$, the functional $I(t)$ satisfies 
\begin{equation}\label{eq:ineqI}
\begin{aligned}
\frac{d}{dt} I(t)+&\frac{1}{4\tau}\int_\T\rho\sigma^2dx+\frac{1}{2\tau}\int_\T \rho v^4 dx \\
\leq & C(M_0,E_0)\tau \int_\T\rho\mu^2 dx+C(M_0,E_0,\delta)(1+I(t)^\frac12)\frac{1}{\tau}\int_\T \rho v^2dx.
\end{aligned}
\end{equation}
As a consequence we have
\begin{equation}\label{eq:bdI}
I(t)+\frac{1}{\tau}\int_0^t\int_\T \rho\sigma^2 dxds+\frac{1}{\tau}\int_0^t\int_\T \rho v^4 dxds\leq C(M_0,E_0,I_0,\delta,\tau t)
\end{equation}
where $I(0)\leq I_0$ and $C(\dots,\tau t)$ may grow at rate $e^{\tau t}$.
\end{prop}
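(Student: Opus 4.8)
The plan is to start from the differential identity \eqref{eq:rl_I} for $\tfrac{d}{dt}I(t)$, already proved in Proposition~\ref{prop:3.2}, and to estimate each of its three right-hand terms so that, after transferring the $\sigma^2$- and $v^4$-contributions to the left, the right-hand side retains only a term $C\tau\int_\T\rho\mu^2\,dx$ (carrying the factor $\tau$, \emph{not} $\tau^{-1}$) and a term $C(1+I^{1/2})\tau^{-1}\int_\T\rho v^2\,dx$ whose time integral is controlled by the energy dissipation \eqref{eq:en_disp}. Keeping track of where the singular factor $\tau^{-1}$ is allowed to appear is the crux of the matter, and it dictates the shape of \eqref{eq:ineqI}.

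I would first record some \emph{standing bounds}, valid on $[0,T^*)$ and depending only on $M_0,E_0,\delta$ and on $I(t)$. The energy law \eqref{eq:en_disp} and the Poincar\'e inequality give $\delta\le\rho\le M_0+\sqrt{2M_0E_0}$, $\|\d_x\sqrt\rho\|_{L^2_x}^2\le 2E_0$, $\|v\|_{L^2_x}^2\le\delta^{-1}\|\sqrt\rho v\|_{L^2_x}^2$ and $\|f'(\rho)+V\|_{L^\infty_x}\le C(M_0,E_0)$. The essential nonlinear bounds are $\|\d_x^2\sqrt\rho\|_{L^2_x}^2\le C(M_0,E_0,\delta)(1+I(t))$ and $\int_\T\rho v^4\,dx\le C(M_0,E_0,\delta)(1+I(t)^{1/2})$, whence $\|\d_x\sqrt\rho\|_{L^\infty_x}^2\le 2\|\d_x\sqrt\rho\|_{L^2_x}\|\d_x^2\sqrt\rho\|_{L^2_x}\le C(1+I(t)^{1/2})$ by Agmon's inequality. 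These come from a self-improving argument: the identity $\d_x^2\sqrt\rho=\sqrt\rho v^2+2\sqrt\rho(f'(\rho)+V)-2\sqrt\rho\mu$ (a rewriting of \eqref{eq:chem}) gives $\|\d_x^2\sqrt\rho\|_{L^2}^2\lesssim 1+I+\int_\T\rho v^4$, while setting $\Lambda=\sqrt\rho v$ and using the continuity-equation identities $\d_x v=-2\sigma-2v\,\d_x\sqrt\rho/\sqrt\rho$ and $\d_x\Lambda=-v\,\d_x\sqrt\rho-2\sqrt\rho\sigma$, Agmon's inequality $\|\Lambda\|_{L^\infty}^2\lesssim\|\Lambda\|_{L^2}(\|\Lambda\|_{L^2}+\|\d_x\Lambda\|_{L^2})$ and $\int_\T\rho v^4\le\delta^{-1}\|\Lambda\|_{L^\infty}^2\|\Lambda\|_{L^2}^2$ yields $\int_\T\rho v^4\lesssim 1+I^{1/2}+\|\d_x^2\sqrt\rho\|_{L^2}^{1/2}$; substituting and absorbing closes the loop. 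I will also use $\|\d_t V\|_{L^\infty_x}^2\le M_0\int_\T\rho v^2\,dx$, which follows from $-\d_x^2\d_t V=\d_t\rho=-\d_xJ$, periodicity, and $\|J\|_{L^1}\le M_0^{1/2}(\int_\T\rho v^2)^{1/2}$.

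Next I estimate the three terms of \eqref{eq:rl_I}. Since $\d_t\rho=2\rho\sigma$, one has $\d_t p(\rho)=2p'(\rho)\rho\sigma$; boundedness of $p'$ on $[\delta,M_0+\sqrt{2M_0E_0}]$ together with Cauchy--Schwarz and a $\tau$-weighted Young inequality give $\int_\T\mu\,\d_t p(\rho)\,dx\le\tfrac1{8\tau}\int_\T\rho\sigma^2\,dx+C(M_0,E_0)\tau\int_\T\rho\mu^2\,dx$. For the Poisson term, Cauchy--Schwarz, the bound on $\|\d_t V\|_{L^\infty}$ and a $\tau$-weighted Young inequality give $\int_\T\rho\mu\,\d_t V\,dx\le\tfrac\tau2\int_\T\rho\mu^2\,dx+C(M_0)\tau^{-1}\int_\T\rho v^2\,dx$. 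For the collisional term $-\tfrac1\tau\int_\T\rho v^2\mu\,dx$ I expand $\mu$ via \eqref{eq:chem}: the $\tfrac12v^2$ piece produces exactly $-\tfrac1{2\tau}\int_\T\rho v^4\,dx$, which I move to the left-hand side; the $f'(\rho)+V$ piece is bounded by $C(M_0,E_0)\tau^{-1}\int_\T\rho v^2\,dx$; and the quantum piece $\tfrac1{2\tau}\int_\T\sqrt\rho v^2\,\d_x^2\sqrt\rho\,dx$ I integrate by parts and, using $\sqrt\rho\,\d_x v=-2\sqrt\rho\sigma-2v\,\d_x\sqrt\rho$, rewrite as $\tfrac3{2\tau}\int_\T(\d_x\sqrt\rho)^2v^2\,dx+\tfrac2\tau\int_\T\sqrt\rho\,\d_x\sqrt\rho\,v\sigma\,dx$. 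A Young inequality on the second integral turns it into $\tfrac1{8\tau}\int_\T\rho\sigma^2\,dx+C\tau^{-1}\int_\T(\d_x\sqrt\rho)^2v^2\,dx$, and then $\int_\T(\d_x\sqrt\rho)^2v^2\,dx\le\delta^{-1}\|\d_x\sqrt\rho\|_{L^\infty}^2\int_\T\rho v^2\,dx\le C(M_0,E_0,\delta)(1+I^{1/2})\int_\T\rho v^2\,dx$ by the standing bound. The decisive point is that the integration by parts (in place of a direct Cauchy--Schwarz, which would leave a useless $\tau^{-1}I^{3/4}$ term) creates no new $\int_\T\rho v^4\,dx$, so the $\tfrac1{2\tau}\int_\T\rho v^4$ transferred to the left survives with its full coefficient. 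Collecting everything and absorbing the $\sigma^2$- and $v^4$-terms into the left-hand side of \eqref{eq:rl_I} yields \eqref{eq:ineqI}.

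Finally, \eqref{eq:bdI} follows by a Gr\"onwall argument. From \eqref{eq:ineqI}, using $\int_\T\rho\mu^2\,dx\le 2I$ and $1+I^{1/2}\le 2(1+I)$, one gets $\tfrac{d}{dt}(1+I)\le\big(C(M_0,E_0)\tau+C(M_0,E_0,\delta)\tau^{-1}\int_\T\rho v^2\,dx\big)(1+I)$; since $\tau^{-1}\int_0^t\int_\T\rho v^2\,dx\,ds\le E_0$ by \eqref{eq:en_disp}, Gr\"onwall gives $I(t)\le C(M_0,E_0,I_0,\delta)\exp\big(C(M_0,E_0)\tau t\big)$, i.e.\ a constant growing at most like $e^{\tau t}$. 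Integrating \eqref{eq:ineqI} in time and inserting this bound on the right controls $\tau^{-1}\int_0^t\int_\T\rho\sigma^2\,dx\,ds$ and $\tau^{-1}\int_0^t\int_\T\rho v^4\,dx\,ds$ by $C(M_0,E_0,I_0,\delta,\tau t)$, which completes the proof. The main obstacle is the $\tau$-bookkeeping in the collisional term: it is genuinely singular as $\tau\to0$, and only the combination of the above integration by parts with the self-improving bound on $\|\d_x^2\sqrt\rho\|_{L^2}$ recasts it so that every surviving $\tau^{-1}$ multiplies a dissipated quantity.
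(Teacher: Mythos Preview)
Your argument is correct and follows essentially the same route as the paper's proof. Both start from \eqref{eq:rl_I}, treat the pressure term by a $\tau$-weighted Young inequality, handle the collisional term $-\tau^{-1}\int_\T\rho v^2\mu\,dx$ by expanding the chemical potential, integrating the quantum part by parts, and using the continuity equation to reduce everything to $\tau^{-1}\int_\T v^2(\d_x\sqrt\rho)^2\,dx$, which is then bounded via $\|\d_x\sqrt\rho\|_{L^\infty}^2\le C(1+I^{1/2})$; the final Gr\"onwall step is also the same.

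The only noteworthy differences are cosmetic. First, you expand $\mu$ directly whereas the paper expands $\rho\mu=-\tfrac14\d_x^2\rho+\tfrac12(\d_x\sqrt\rho)^2+\cdots$; since $\tfrac14 v^2\d_x^2\rho-\tfrac12 v^2(\d_x\sqrt\rho)^2=\tfrac12\sqrt\rho\,v^2\d_x^2\sqrt\rho$, the two decompositions are identical. Second, for the Poisson term you use $\|\d_tV\|_{L^\infty}\lesssim\|J\|_{L^1}$ to obtain a contribution of the form $C\tau^{-1}\int_\T\rho v^2\,dx$, while the paper uses $\|\d_tV\|_{L^2}\lesssim\|\sqrt\rho\sigma\|_{L^2}$ to obtain a $\tfrac{1}{4\tau}\int_\T\rho\sigma^2\,dx$ contribution instead; both fit into \eqref{eq:ineqI}. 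Third, your self-improving bound for $\|\d_x^2\sqrt\rho\|_{L^2}$ and $\int_\T\rho v^4$ uses $\Lambda=\sqrt\rho v$ and Agmon, whereas the paper applies Gagliardo--Nirenberg to $\rho v$ and exploits $\|\d_x(\rho v)\|_{L^2}\le C\|\sqrt\rho\sigma\|_{L^2}$; your loop closes equally well because the exponent $\tfrac12$ on $\|\d_x^2\sqrt\rho\|_{L^2}$ allows absorption.
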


\begin{proof}
Let us recall \eqref{eq:rl_I} in Proposition \ref{prop:3.2}, the time derivative of functional $I(t)$ is given by
\[
\frac{d}{dt} I(t)+\frac{1}{\tau}\int_T\rho \sigma^2 dx=\int_\T \mu\d_tp(\rho)dx+\int_\T\rho\mu\d_tV dx-\frac{1}{\tau}\int_\T\rho v^2\mu dx.
\]
The right hand side can be estimated in the following way. First we have
\[
\int_\T \mu\d_tp(\rho)dx\leq 2\|p'(\rho)\|_{L^\infty_x}\|\sqrt\rho\mu\|_{L^2_x}\|\d_t\sqrt\rho\|_{L^2_x}.
\]
Recalling $p\in C^2((0,\infty))$, by property (1) in Proposition \ref{prop:3.2} and the Gagliardo-Nirenberg inequality
\[
\|\sqrt\rho\|_{L^\infty_x}\leq C(\|\sqrt\rho\|_{L^2_x}^\frac12\|\d_x\sqrt\rho\|_{L^2_x}^\frac12+\|\sqrt\rho\|_{L^2_x}),
\]
it follows that
\[
\|p'(\rho)\|_{L^\infty_x}\leq C(M_0,E_0).
\]
Thus we obtain
\begin{align*}
\int_\T \mu\d_tp(\rho)dx\leq & C(M_0,E_0)\|\sqrt\rho\mu\|_{L^2_x}\|\d_t\sqrt\rho\|_{L^2_x}\\
\leq & C(M_0,E_0)\tau\int_\T\rho\mu^2dx+\frac{1}{4\tau}\int_\T\rho\sigma^2dx\\
\leq & C(M_0,E_0)\tau I(t)+\frac{1}{4\tau}\int_\T\rho\sigma^2dx
\end{align*}
Similarly, we have
\[
\int_\T \rho\mu\d_tVdx\leq 2\|\sqrt\rho\|_{L^\infty_x}\|\sqrt\rho\mu\|_{L^2_x}\|\d_tV\|_{L^2_x},
\]
Recall that $\int_\T V dx=0$ and $-\d_x^2\d_tV=\d_t\rho$, then by Poincar\'e inequality it follow that
\[
\|\d_tV\|_{L^2_x}\leq C\|\d_t\rho\|_{L^2_x}=2C\|\sqrt\rho\|_{L^\infty_x}\|\d_t\sqrt\rho\|_{L^2_x}\leq C(M_0,E_0)\|\sqrt\rho\sigma\|_{L^2_x},
\]
which gives
\begin{align*}
\int_\T \rho\mu\d_tVdx\leq & C(M_0,E_0)\|\sqrt\rho\mu\|_{L^2_x}\|\sqrt\rho\sigma\|_{L^2_x}\\
\leq & C(M_0,E_0)\tau I(t)+\frac{1}{4\tau}\int_\T\rho\sigma^2dx.
\end{align*}
For the last integral in the right hand side of \eqref{eq:rl_I}, by recalling
\[
\rho\mu=-\frac14\d_x^2\rho+\frac12(\d_x\sqrt\rho)^2+\frac12\rho v^2+f'(\rho)\rho+\rho V,
\]
we write 
\begin{align*}
-\tau^{-1}\int_\T\rho v^2\mu dxds=&\frac{1}{4\tau}\int_\T v^2\d_x^2\rho dx-\frac{1}{2\tau}\int_\T v^2(\d_x\sqrt\rho)^2dx\\
&-\frac{1}{2\tau}\int_\T \rho v^4 dx-\tau^{-1}\int_\T (f'(\rho)+V)\rho v^2dx.
\end{align*}
By integrating by parts, we have
\begin{align*}
\frac{1}{4\tau}\int_\T v^2\d_x^2\rho dx=&-\frac{1}{2\tau}\int_\T v \d_x v\d_x\rho dx\\
=&-\frac{1}{2\tau}\int_\T\frac{\d_x(\rho v)}{\rho}v\d_x\rho dx+\frac{2}{\tau}\int_\T v^2(\d_x\sqrt\rho)^2dx\\
=&\frac{2}{\tau}\int_\T(\sqrt\rho\sigma)( v\d_x\sqrt\rho) dx+\frac{2}{\tau}\int_\T v^2(\d_x\sqrt\rho)^2dx\\
\leq & \frac{1}{4\tau} \int_\T\rho\sigma^2dx+\frac{6}{\tau}\int_\T v^2(\d_x\sqrt\rho)^2dx.
\end{align*}
The integral $\frac{1}{\tau}\int_\T v^2(\d_x\sqrt\rho)^2dx$ can be estimated as
\begin{align*}
\frac{1}{\tau}\int_\T v^2(\d_x\sqrt\rho)^2dx\leq \frac{\|\d_x\sqrt\rho\|_{L^\infty_x}^2}{\delta\tau}\int_\T \rho v^2 dx,
\end{align*}
By Gagliardo-Nirenberg inequality (since $\d_x\sqrt\rho$ has mean $0$) and \eqref{eq:en_disp},
\[
\|\d_x\sqrt\rho\|_{L^\infty_x}^2\leq C \|\d_x\sqrt\rho\|_{L^2_x}\|\d_x^2\sqrt\rho\|_{L^2_x}\leq C(E_0)\|\d_x^2\sqrt\rho\|_{L^2_x}.
\]
Again by using the chemical potential and the mass-energy bounds, we have
\begin{align*}
\|\d_x^2\sqrt\rho\|_{L^2_x}\leq &2\|\sqrt\rho_\tau\mu_\tau\|_{L^2_x}+\delta^{-\frac32}\|\rho v\|_{L^4_x}^2+2\|f'(\rho)\sqrt\rho\|_{L^2_x}\\
\leq & 4 I(t)^\frac12+\delta^{-\frac32}\|\rho v\|_{L^4_x}^2+C(M_0,E_0)
\end{align*}
To control $\|\rho v\|_{L^4_x}$, we use the Gagliardo-Nirenberg inequality,
\[
\int_\T (\rho v)^4dx\leq C\|\rho v\|_{L^2_x}^3\|\d_x(\rho v)\|_{L^2_x}+C\|\rho v\|_{L^2_x}^4,
\]
which imply
\begin{align*}
\delta^{-\frac32}\|\rho v\|_{L^4_x}^2\leq& \frac{C(M_0,E_0)}{\delta^\frac32}(\|\sqrt\rho v\|_{L^2_x}^2+\|\sqrt\rho v\|_{L^2_x}^3)+\delta^{-\frac32}\|\sqrt\rho \sigma\|_{L^2_x}\\
\leq & \frac{C(M_0,E_0)}{\delta^\frac32}+\delta^{-\frac32}I(t)^\frac12.
\end{align*}
Thus we obtain
\begin{align*}
\frac{1}{\tau}\int_\T v^2(\d_x\sqrt\rho)^2dx\leq C(M_0,E_0,\delta)(1+I(t)^\frac12)\tau^{-1}\int_\T\rho v^2dx.
\end{align*}
Last, we have
\begin{align*}
\frac{1}{\tau}\int_\T (f'(\rho)+V)\rho v^2dx\leq&  \|f'(\rho)+V\|_{L^\infty_x}\frac{1}{\tau}\int_\T \rho v^2dx\\
\leq & C(M_0,E_0)\frac{1}{\tau}\int_\T \rho v^2dx.
\end{align*}
Summarizing the inequalities above, we obtain
\begin{align*}
\frac{d}{dt} I(t)+\frac{1}{4\tau}\int_\T&\rho\sigma^2dx+\frac{1}{2\tau}\int_\T \rho v^4 dx \\
\leq & C(M_0,E_0)\tau I(t)+C(M_0,E_0,\delta)(1+I(t)^\frac12)\frac{1}{\tau}\int_\T \rho v^2dx.
\end{align*}
Notice that by \eqref{eq:en_disp},
\[
\frac{1}{\tau}\int_0^t\int_\T \rho v^2dxds\leq E_0,
\]
thus by Gronwall inequality it follows that for any $0\leq t<T^*$,
\begin{equation}\label{eq:bdI_rs2}
I(t)+\frac{1}{\tau}\int_0^t\int_\T\rho\sigma^2dxds+\frac{1}{\tau}\int_0^t\int_\T \rho v^4 dxds\leq C(M_0,E_0,I_0,\delta,\tau t),
\end{equation}
where $C(M_0,E_0,I_0,\tau t)$ grows at most exponentially in $\tau t$.
\end{proof}

As a consequence of Proposition \ref{prop:3.2} and Proposition \ref{prop:bdI}, we can extend the local existence result of the Cauchy problem \eqref{eq:cauchy_NLS} to a global one, which proves the global well-posedness of the Schr\"odinger-Langevin equation as Theorem \ref{thm:NLS}.

\begin{proof}[Proof of Theorem \ref{thm:NLS}]
The positivity of $|\psi|$ is already guaranteed by property (2) of Proposition \ref{prop:3.2}, then we only need to show the $\|\psi\|_{L^\infty_tH^2_x}$ norm remains finite upto any finite time $0<T<\infty$. Noticing that by using the chemical potential \eqref{eq:mu_psi}, we have
\[
\int_\T|\d_x^2\psi|^2dx=4\int_\T\rho[(\mu-f'(\rho)-V)^2+\sigma^2]dx\leq C(I(t)+E(t)+M_0),
\]
then the finiteness of $\|\psi\|_{L^\infty_tH^2_x}$ follows from the bounds \eqref{eq:cons_mass}, \eqref{eq:en_disp} and \eqref{eq:bdI} upto any $0<T<\infty$.
\end{proof}

Now we are at the point to prove Theorem \ref{thm:glob2}.

\begin{proof}[Proof of Theorem \ref{thm:glob2}]
For initial data $(\rho_0,J_0)$ given as in Theorem \ref{thm:glob2}, by using Proposition \ref{prop:lift2} we construct a wave function $\psi_0\in H^2(\T)$ associated to $(\rho_0,J_0)$. Moreover we have $\inf_x|\psi_0|=\inf_x\sqrt\rho_0\geq \delta^\frac12$,
\[
\|\d_x\psi_0\|_{L^2_x}dx \leq \sqrt{2E_0}\leq M_0^\frac12-\delta M_0^{-\frac12},
\]
and $\|\psi_0\|_{H^2_x}\leq C(M_0,E_0,I_0)$. Thus by setting $\psi_0$ as initial data and applying Theorem \ref{thm:NLS}, we solve the Cauchy problem for the NLS equation \eqref{eq:cauchy_NLS} to obtain a global solution $\psi\in L^\infty(0,T;H^2(\T))$ for any $0<T<\infty$. 

Now we define the hydrodynamic variable associated to $\psi$ as 
\[
\rho=|\psi|^2,\quad J=\rho v=\IM(\bar\psi\d_x\psi),
\]
and prove $(\rho, J)$ is a weak solution to \eqref{eq:QHD} in the sense of Definition \ref{def:FEWS}. By using \eqref{eq:cauchy_NLS}, direct computation shows
\[
\d_t\rho=2\RE(\bar\psi\d_t\psi)=-\IM(\bar\psi\d_x^2\psi)=-\d_x\IM(\bar\psi\d_x\psi)=-\d_xJ,
\]
namely the continuity equation holds. To prove the momentum equation, we again need to use the standard mollifiers $\{\chi_\eps\}_{\eps>0}$ and define 
\[
\psi_\eps=\psi\ast\chi_\eps,\quad J_\eps=\IM(\psi_\eps\d_x\psi_\eps).
\]
Again we define the potential $W=f'(\rho)+V$. Then $J_\eps$ satisfies the equation
\begin{align*}
\d_tJ_\eps=&\IM(\d_t\bar\psi_\eps\d_x\psi_\eps)+\IM(\bar\psi_\eps\d_x\d_t\psi_\eps)\\
=& -\frac12\RE(\d_x\bar\psi_\eps\d_x^2\psi_\eps)+\RE[(W\bar\psi+\tau^{-1}S\bar\psi)_\eps\d_x\psi_\eps]\\
&+\frac12\RE(\bar\psi_\eps\d_x^3\psi_\eps)-\RE[\bar\psi_\eps\d_x(W\psi+\tau^{-1}S\psi)_\eps].
\end{align*}
Take $\zeta\in\mathcal C^\infty_0([0, T)\times\T)$ to be an arbitrary test function, then by integrating by parts we have
\begin{align*}
\int_0^T\int_\T J_\eps \d_t\zeta dxdt=&-\int_\T J_\eps(0)\zeta(0)dx-\frac12\int_0^T\int_\T|\d_x\psi_\eps|^2\d_x\zeta dxdt\\
&+\frac12\int_0^T\int_\T\RE(\bar\psi_\eps\d_x^2\psi_\eps)\d_x\zeta dxdt\\
&-\int_0^T\int_\T \zeta\RE[(W\bar\psi+\tau^{-1}S\bar\psi)_\eps\d_x\psi_\eps] dxdt\\
&+\int_0^T\int_\T \zeta\RE[\bar\psi_\eps\d_x(W\psi+\tau^{-1}S\psi)_\eps]dxdt.
\end{align*}
By letting $\eps\to 0$ and using 
\[
J_\eps=\IM(\psi_\eps\d_x\psi_\eps)\to \IM(\psi\d_x\psi)=J,
\]
it follows that
\begin{align*}
\int_0^T\int_\T J \d_t\zeta dxdt=&-\int_\T J_0\zeta(0)dx-\frac12\int_0^T\int_\T|\d_x\psi|^2\d_x\zeta dxdt\\
&+\frac12\int_0^T\int_\T\RE(\bar\psi\d_x^2\psi)\d_x\zeta dxdt\\
&-\int_0^T\int_\T \zeta\RE[(W\bar\psi+\tau^{-1}S\bar\psi)\d_x\psi] dxdt\\
&+\int_0^T\int_\T \zeta\RE[\bar\psi\d_x(W\psi+\tau^{-1}S\psi)]dxdt.
\end{align*}
We further compute
\[
\RE(\bar\psi\d_x^2\psi)=\frac12\d_x^2\rho-|\d_x\psi|^2,
\]
and
\begin{align*}
-\RE[(W\bar\psi+\tau^{-1}S\bar\psi)\d_x\psi]+\RE[&\bar\psi\d_x(W\psi+\tau^{-1}S\psi)]\\
=&\rho\d_x [f'(\rho)+V]+\tau^{-1}\rho\d_xS\\
=&\d_xp(\rho)+\rho\d_xV+\tau^{-1}J,
\end{align*}
where in the last identity we use 
\[
p(\rho)=f'(\rho)\rho-f(\rho),\; \d_xS=v.
\]
Thus we obtain
\begin{align*}
\int_0^T\int_\T J \d_t\zeta dxdt=&-\int_\T J_0\zeta(0)dx-\int_0^T\int_\T|\d_x\psi|^2\d_x\zeta dxdt\\
&+\frac14\int_0^T\int_\T\d_x^2\rho\d_x\zeta dxdt-\int_0^T\int_\T \zeta (\d_xp(\rho)+\rho\d_xV+\tau^{-1}J)dxdt,
\end{align*}
then by using the polar factorization Lemma \ref{lem:polar},
\[
|\d_x\psi|^2=(\d_x\sqrt\rho)^2+\Lambda^2,
\]
we prove the momentum equation. Moreover, by Proposition \ref{prop:3.2} and Proposition \ref{prop:bdI}, we see $(\rho,J)$ is a pair of GCP solution as in Definition \ref{def:GCPsln} and satisfies the properties in Theorem \ref{thm:glob2}.

Last, to prove the uniqueness, let us assume $(\rho_1, J_1)$ to be an arbitrary GCP solution to \eqref{eq:QHD} satisfying the property of Theorem \ref{thm:glob2}. Then we can define a wave function
\[
\psi_1=\sqrt{\rho}e^{i S_1},
\]
where $S_1$ is the phase function of $(\rho_1,J_1)$ given by Definition \ref{def:phase}. By equation \eqref{eq:gradS}, it is straightforward to compute that $\psi_1$ is an wave function associated to $(\rho_1,J_1)$ in the sense
\[
\rho_1=|\psi_1|^2,\quad J_1=\IM(\bar\psi_1\d_x\psi_1).
\]
Moreover, we claim $\psi_1$ is a solution of the NLS equation \eqref{eq:cauchy_NLS}. By the continuity equation of $\rho$ and \eqref{eq:gradS}, we have
\begin{align*}
\d_t\psi_1=&(\d_t\sqrt\rho_1+i\sqrt\rho_1\d_tS_1)e^{iS_1}\\
=&-\left[\frac{\d_xJ_1}{2\sqrt\rho_1}+i\sqrt\rho_1(\mu_1+\tau^{-1}S_1)\right]e^{iS_1}.
\end{align*}
Recall that
\[
\d_xJ_1=\IM(\bar\psi_1\d_x^2\psi_1),\quad \mu_1=-\frac12\RE\left(\frac{\d_x^2\psi_1}{\psi_1}\right)+W_1,
\]
where $W_1=f'(\rho_1)+V_1$ and $-\d_x^2 V_1=\rho_1-\mathcal{C}(x)$, then it follows that
\begin{align*}
\d_t\psi_1=&\left[-\frac{\IM(\bar\psi_1\d_x^2\psi_1)}{2\rho_1}+\frac{i}{2}\RE\left(\frac{\d_x^2\psi_1}{\psi_1}\right)-i(W_1+\tau^{-1}S_1)\right]\sqrt\rho_1 e^{iS_1}\\
=&\left[-\frac12\IM\left(\frac{\d_x^2\psi_1}{\psi_1}\right)+\frac{i}{2}\RE\left(\frac{\d_x^2\psi_1}{\psi_1}\right)-i(W_1+\tau^{-1}S_1)\right]\psi_1\\
=&\frac{i}{2}\d_x^2\psi_1-i(W_1+\tau^{-1}S_1)\psi_1.
\end{align*}
Now we consider the initial data $\psi_{1,0}(x)=\psi_1(0,x)$ and $\psi_0$, which are associated to same hydrodynamic data $(\sqrt\rho_0,\Lambda_0=J_0/\sqrt\rho_0)$ in the sense that
\[
|\psi_{1,0}|=\sqrt\rho_0=|\psi_0|,\quad \IM(\bar\phi_{0,1}\d_x\psi_{0,1})=\Lambda_0=\IM(\bar\phi_{0}\d_x\psi_{0}).
\]
Then the following lemma from \cite{AMZ1} tell us $\psi_{1,0}$ and $\psi_{0}$ are only differed by a constant phase shift.

\begin{lem}\label{lemma:rot}
Let $\psi_1\in H^1([a,b])$ and $\psi_2\in H^1([a,b])$ be two wave functions associated to the same hydrodynamic data $(\sqrt{\rho},\Lambda)\in H^1([a,b])\times L^2([a,b])$, namely
\[
\sqrt{\rho}=|\psi_1|=|\psi_2|,\quad \Lambda=\IM(\bar\phi_1\d_x\psi_1)=\IM(\bar\phi_2\d_x\psi_2),
\] 
where $\phi_j$ is a polar factor of $\psi_j$, $j=1,2$. Furthermore we assume $\rho>0$ on interval $[a,b]$. Then there exists a constant $\theta\in[0,2\pi)$ such that
\[
\psi_2=e^{i\theta}\,\psi_1.
\]
\end{lem}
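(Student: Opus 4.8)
The plan is to show that the pointwise ratio $w=\psi_2/\psi_1$ is a constant of modulus one. Since $[a,b]$ is compact and $\sqrt\rho=|\psi_1|$ is continuous and strictly positive, we have $c:=\inf_{[a,b]}\rho>0$; by the one-dimensional Sobolev embedding $H^1([a,b])\hookrightarrow C([a,b])$ together with $|\psi_1|\geq c^{1/2}>0$, the function $1/\psi_1$ belongs to $H^1([a,b])\cap L^\infty([a,b])$, and hence so does $w=\psi_2\cdot(1/\psi_1)$, with $|w|=|\psi_2|/|\psi_1|=1$ everywhere on $[a,b]$. In particular $\d_x w=(\psi_1\d_x\psi_2-\psi_2\d_x\psi_1)/\psi_1^2\in L^2$, so $w\in H^1([a,b])$ as a genuine function.

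Next I would compute $\bar\psi_j\d_x\psi_j$ for $j=1,2$ via the polar factorization. Writing $\psi_j=\sqrt\rho\,\phi_j$ with $\phi_j$ a polar factor, we have $\bar\psi_j=\sqrt\rho\,\bar\phi_j$, hence $\bar\psi_j\d_x\psi_j=\sqrt\rho\,\bar\phi_j\d_x\psi_j$. By the polar factorization lemma recalled above, $\RE(\bar\phi_j\d_x\psi_j)=\d_x\sqrt\rho$, while by hypothesis $\IM(\bar\phi_j\d_x\psi_j)=\Lambda$; therefore
\[
\bar\psi_j\d_x\psi_j=\sqrt\rho\,(\d_x\sqrt\rho+i\Lambda),\qquad j=1,2,
\]
and in particular $\bar\psi_1\d_x\psi_1=\bar\psi_2\d_x\psi_2$ a.e. on $[a,b]$.

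Then I would express $\d_x w$ through these quantities. A direct computation, using $|\psi_1|^2=|\psi_2|^2=\rho$ in the middle step and the identity above in the last, gives
\[
\bar w\,\d_x w=\frac{\bar\psi_2}{\bar\psi_1}\cdot\frac{\psi_1\d_x\psi_2-\psi_2\d_x\psi_1}{\psi_1^2}
=\frac{\bar\psi_2\d_x\psi_2}{|\psi_1|^2}-\frac{\bar\psi_1\d_x\psi_1}{|\psi_1|^2}
=\frac{1}{\rho}\bigl(\bar\psi_2\d_x\psi_2-\bar\psi_1\d_x\psi_1\bigr)=0
\]
a.e. on $[a,b]$. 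Since $|w|=1$, multiplying by $w$ yields $\d_x w=0$ a.e.; as $w\in H^1([a,b])$ and $[a,b]$ is connected, $w$ is constant, and since $|w|=1$ we may write $w=e^{i\theta}$ for some $\theta\in[0,2\pi)$, i.e. $\psi_2=e^{i\theta}\psi_1$.

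The only delicate point is the regularity bookkeeping in the first paragraph: one must ensure that $w$ is a bona fide $H^1$ function (so that ``zero weak derivative on a connected set implies constant'' applies) rather than merely an a.e.-defined quotient, and this is exactly where the strict positivity of $\rho$ on the \emph{compact} interval $[a,b]$ — yielding a uniform lower bound for $|\psi_1|$, hence $1/\psi_1\in H^1\cap L^\infty$ — is used. The remaining steps are algebraic identities valid almost everywhere and require no further subtlety.
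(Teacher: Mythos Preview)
Your proof is correct. The paper itself does not give a proof of this lemma; it is quoted from \cite{AMZ1} and used as a black box in the uniqueness argument for Theorem~\ref{thm:glob2}. Your argument---forming the quotient $w=\psi_2/\psi_1$, using the polar factorization identity $\bar\psi_j\d_x\psi_j=\sqrt\rho(\d_x\sqrt\rho+i\Lambda)$ to conclude $\bar w\,\d_x w=0$, and then invoking $|w|=1$ to get $\d_x w=0$---is a clean and complete proof, and your attention to the regularity of $w$ (via the uniform positive lower bound on $|\psi_1|$ over the compact interval) is exactly the point that justifies treating $w$ as an $H^1$ function so that vanishing weak derivative implies constancy.
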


Therefore by choosing $S_1(0,0)$ in Definition \ref{def:phase} suitably, we can make $\psi_{1,0}=\psi_{0}$. Then by the uniqueness the Cauchy problem \eqref{eq:cauchy_NLS}, we have $\psi_1=\psi$, which implies
\[
(\rho_1,J_1)=(|\psi_1|^2,\IM(\bar\psi_1\d_x\psi_1))=(|\psi|^2,\IM(\bar\psi\d_x\psi))=(\rho,J).
\]
\end{proof}

\section{Rescaling of QHD system with damping and uniform estimates in $\tau$}\label{sect:rs}

From this section we begin to consider the rescaling of the QHD system \eqref{eq:QHD} by the relaxation-time $\tau$ and the limiting system as $\tau\to 0$. By using the scaling
\begin{equation}\label{eq:rs}
t'=\tau t,\quad (\rho_\tau,v_\tau)(t',x)=\left(\rho,\frac{1}{\tau}v\right)\left(\frac{t'}{\tau},x\right),
\end{equation}
system \eqref{eq:QHD} can be rewritten as
\begin{equation}\label{eq:QHD_rs}
\left\{\begin{aligned}
&\d_{t'}\rho_\tau+\d_x (\rho_\tau v_\tau)=0\\
&\tau^2\d_t (\rho_\tau v_\tau)+\tau^2\d_x(\rho_\tau v_\tau^2)+\d_x p(\rho_\tau)+\rho_\tau\d_x V_\tau=\frac{1}{2}\rho_\tau\d_x\left(\frac{\d_x^2\sqrt\rho_\tau}{\sqrt\rho_\tau}\right)-\rho_\tau v_\tau\\
&-\d_x^2V_\tau=\rho_\tau-\mathcal{C}(x).
\end{aligned}\right.
\end{equation}
For simplicity of the notation, in the later contents of this paper we still use $t$ to denote the rescaled time, but We emphasize that from now on the time $t$ is the rescaled time.

By using rescaling \eqref{eq:rs}, the total mass $M(t)$, the total energy $E(t)$ and the higher order functional $I(t)$ can be rewritten into rescaled forms respectively, which provide a series of bounds uniform in $\tau$.

First, conservation of mass \eqref{eq:mass} and energy relation \eqref{eq:en} are transformed into
\begin{equation}\label{eq:M_rs}
M_\tau(t)=\int_\T \rho_\tau(t)dx\equiv M_0,
\end{equation}
and
\begin{equation}\label{eq:en_rs}
E_\tau(t)+\int_0^T\int_\T \rho_\tau v_\tau^2dxds=E_0,
\end{equation}
for $E_\tau(t)=\int_\T e_\tau(t)dx$, where the rescaled energy density is given by
\begin{equation}\label{eq:endens_rs}
e_\tau=\frac{\tau^2}{2}\rho_\tau v_\tau^2+\frac12(\d_x\sqrt{\rho_\tau})^2+f(\rho_\tau)+\frac12(\d_xV_\tau)^2.
\end{equation}

For the higher or order functionals, we define the rescaled chemical potential as
\begin{equation}\label{eq:chem_rs}
\mu_\tau=\frac{1}{\tau}\left(-\frac{\d_x^2\sqrt\rho_\tau}{2\sqrt\rho_\tau}+\frac{\tau^2}{2}v_\tau^2+f'(\rho_\tau)+V_\tau\right),\quad \sigma_\tau=\d_t\log\sqrt{\rho}.
\end{equation}
Moreover, the rescaled phase function $S_\tau$ is introduced following the idea of Definition \ref{def:phase}, which should satisfies the space-time gradient equation
\begin{equation}\label{eq:gradS_rs}
\begin{cases}
\d_xS_\tau =v_\tau,\quad\tau^2\d_t S_\tau=-\tau\mu_\tau-S_\tau \\
S_\tau(0,0)=\tau^{-1}S_*.
\end{cases}
\end{equation}
Last, the higher order functional are rescaled as
\begin{equation}\label{eq:higher_rs}
I_\tau(t)=\int_T\frac{\tau^2}{2}\rho_\tau(\mu_\tau^2+\sigma_\tau^2)dx.
\end{equation}

The uniform estimate of $I_\tau(t)$ can be obtained by rescaling \eqref{eq:rl_I} and Proposition \ref{prop:bdI}.

\begin{prop}\label{prop:bdI_rs}
Let $(\rho_\tau, v_\tau)$ be a finite energy GCP solution to \eqref{eq:QHD_rs} such that $\rho_\tau>0$. Then for almost every $0\leq t<\infty$, the functional $I_\tau(t)$ is differentiable, 
\begin{equation}\label{eq:dispI_rs}
\begin{aligned}
\frac{d}{dt}I_\tau(t)+\int_\T \rho_\tau\sigma_\tau^2 dx=&\tau\int_\T\mu_\tau\d_tp(\rho_\tau)dx\\
&+\tau\int_\T\rho_\tau\mu_\tau\d_tV_\tau dx-\tau\int_\T\rho_\tau v_\tau^2\mu_\tau dx.
\end{aligned}
\end{equation}
As a consequence, it follows that
\begin{equation}\label{eq:ineqI_rs}
\begin{aligned}
\frac{d}{dt} I_\tau(t)+&\frac14\int_\T\rho_\tau\sigma_\tau^2dx+\frac{\tau^2}{2}\int_\T \rho_\tau v_\tau^4 dx \\
\leq & C(M_0,E_0)\int_\T\tau^2\rho_\tau\mu_\tau^2 dx+C(M_0,E_0,\delta)(1+I_\tau(t)^\frac12)\int_\T \rho_\tau v_\tau^2dx.
\end{aligned}
\end{equation}
Therefore we have the bounds uniform with respect to $\tau$ that
\begin{equation}\label{eq:bdI_rs}
I_\tau(t)+\int_0^t\int_\T \rho_\tau\sigma_\tau^2 dxds+\tau^2\int_0^t\int_\T \rho_\tau v_\tau^4 dxds\leq C(M_0,E_0,I_0,\delta,t),
\end{equation}
where $I_\tau(0)\leq I_0$ and $C(\dots,t)$ may grow in time at rate $e^{t}$.
\end{prop}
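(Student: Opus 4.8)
The plan is to obtain the whole statement by transporting the already-established identity \eqref{eq:rl_I} and estimate \eqref{eq:ineqI} through the change of variables \eqref{eq:rs}, and then closing the argument with a Gr\"onwall inequality whose time-dependent coefficient is controlled, uniformly in $\tau$, by the rescaled energy balance \eqref{eq:en_rs}. First I would reduce to the non-rescaled framework: a finite energy GCP solution of \eqref{eq:QHD_rs} bounded away from vacuum, $\inf_{t,x}\rho_\tau\ge\delta$, satisfies the hypotheses of the $H^2$ wave function lifting (Proposition \ref{prop:lift2}), hence is associated to a wave function in $L^\infty_tH^2_x(\T)$; undoing \eqref{eq:rs} one obtains a solution of the Cauchy problem \eqref{eq:cauchy_NLS} with $\inf_{t,x}|\psi|>0$, to which Propositions \ref{prop:3.2} and \ref{prop:bdI} apply. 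From the definitions \eqref{eq:chem}, \eqref{eq:sigma}, \eqref{eq:higher}, \eqref{eq:chem_rs}, \eqref{eq:higher_rs} and the scaling \eqref{eq:rs} one reads off the dictionary (all functions on the right evaluated at $(\tau t,x)$):
\[
\rho(t,x)=\rho_\tau,\quad v=\tau v_\tau,\quad \mu=\tau\mu_\tau,\quad \sigma=\tau\sigma_\tau,\quad I(t)=I_\tau(\tau t),
\]
together with $\tfrac{d}{dt}I(t)=\tau\,\tfrac{d}{dt'}I_\tau(\tau t)$, $\d_tp(\rho)=\tau\,\d_{t'}p(\rho_\tau)$ and $\d_tV=\tau\,\d_{t'}V_\tau$.

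Next I would insert this dictionary into \eqref{eq:rl_I}: every term carries an explicit power of $\tau$, and after dividing through by $\tau$ one gets exactly \eqref{eq:dispI_rs}; in particular this also shows that $I_\tau$ is differentiable for a.e.\ $t$, since $I(t)$ is. Feeding the same substitution into \eqref{eq:ineqI} and using $\int_\T\tau^2\rho_\tau\mu_\tau^2\,dx\le 2I_\tau$ yields \eqref{eq:ineqI_rs}. (Equivalently, and perhaps more transparently, one can re-derive \eqref{eq:ineqI_rs} directly from \eqref{eq:dispI_rs} by estimating the three terms on its right-hand side exactly as in the proof of Proposition \ref{prop:bdI}: Gagliardo--Nirenberg for $\|\d_x\sqrt{\rho_\tau}\|_{L^\infty_x}$ and $\|\rho_\tau v_\tau\|_{L^4_x}$, the lower bound $\rho_\tau\ge\delta$, and the $\tau$-uniform mass--energy bounds \eqref{eq:M_rs}, \eqref{eq:en_rs}; the only structural change is that the factor which in \eqref{eq:ineqI} appears as $\tau^{-1}\int\rho v^2$ is now simply $\int\rho_\tau v_\tau^2$.)

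To close the estimate, set $\phi(t)=\int_\T\rho_\tau v_\tau^2\,dx$. By the rescaled energy balance \eqref{eq:en_rs} one has $\int_0^\infty\phi(s)\,ds\le E_0$, \emph{uniformly in} $\tau$. Dropping the non-negative terms $\tfrac14\int\rho_\tau\sigma_\tau^2$ and $\tfrac{\tau^2}{2}\int\rho_\tau v_\tau^4$ from the left of \eqref{eq:ineqI_rs}, bounding $\int\tau^2\rho_\tau\mu_\tau^2\le 2I_\tau$ and using $1+I_\tau^{1/2}\le\tfrac32+\tfrac12 I_\tau$, one reaches $\tfrac{d}{dt}I_\tau\le C\bigl(1+\phi\bigr)I_\tau+C\phi$ with $C=C(M_0,E_0,\delta)$; Gr\"onwall's inequality then gives
\[
I_\tau(t)\le\bigl(I_0+CE_0\bigr)\exp\bigl(Ct+CE_0\bigr)=C(M_0,E_0,I_0,\delta,t),
\]
with at most exponential growth in $t$ and no dependence on $\tau$. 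Reinserting this bound into the right-hand side of \eqref{eq:ineqI_rs} and integrating in time over $[0,t]$ absorbs the dissipation terms and produces \eqref{eq:bdI_rs}.

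The argument is essentially bookkeeping, and the only point that genuinely matters is the last paragraph: it is precisely the gain in the power of $\tau$ multiplying the damping dissipation under the scaling \eqref{eq:rs} — from $\tau^{-1}$ to $O(1)$ — that makes $\int_0^\infty\int_\T\rho_\tau v_\tau^2\,dx\,ds$ bounded independently of $\tau$, and hence makes the Gr\"onwall constant in \eqref{eq:bdI_rs} $\tau$-uniform. If one prefers not to use the lifting reduction, the alternative is to repeat the mollification computation behind \eqref{eq:prop_20_1} directly on the rescaled system \eqref{eq:QHD_rs}; this works identically, at the cost of re-deriving that identity in rescaled variables.
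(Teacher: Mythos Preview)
Your proposal is correct and follows exactly the paper's approach: the paper states only that Proposition~\ref{prop:bdI_rs} ``can be obtained by rescaling \eqref{eq:rl_I} and Proposition~\ref{prop:bdI},'' and your dictionary and Gr\"onwall closure carry this out in detail (indeed more carefully than the paper, since you note the lifting step needed to place the GCP solution in the $L^\infty_tH^2_x$ framework where Propositions~\ref{prop:3.2} and~\ref{prop:bdI} are proved). One harmless slip: the bound $\int_\T\tau^2\rho_\tau\mu_\tau^2\,dx\le 2I_\tau$ is not needed to obtain \eqref{eq:ineqI_rs} itself---the direct substitution into \eqref{eq:ineqI} already produces the term $C\int_\T\tau^2\rho_\tau\mu_\tau^2\,dx$ as stated---it is only used (correctly) in your Gr\"onwall step.
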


\section{Entropy estimates and Dissipation}\label{sect:entrp}

Inspired by the property of the quantum drift-diffusion equation \eqref{eq:qdde}, let us introduce the physical entropy
\begin{equation}\label{eq:entr_1}
H(\rho_\tau)=\int_\T\rho_\tau\log\left(\frac{\rho_\tau}{M_0}\right)dx,
\end{equation}
where $M_0$ is the conserved total mass of $\rho_\tau$. As discussed before, the convexity of the function $g(s)=s\log s$ and the assumption $|\T|=1$ implies $H(\rho_\tau)\geq 0$.

We first compute the time derivative of entropy $H(\rho_\tau)$ for GCP solutions, and by combining it with Proposition \ref{prop:bdI_rs}, a direct consequence is the boundedness of the entropy $H(\rho_{\tau})$. We emphasize again that here $t$ is the rescaled time given by \eqref{eq:rs}.

\begin{prop}\label{prop:entr}
Let $(\rho_\tau,v_\tau)$ be a finite energy GCP solution to the rescaled QHD system \eqref{eq:QHD_rs} such that $\inf_{t,x}\rho_\tau\geq \delta>0$. 
The time derivative of $H(\rho_\tau)$ satisfies the inequality
\begin{equation}\label{eq:ineq_entr_rs}
\begin{aligned}
\frac{d}{dt}&[H(\rho_\tau)+\tau^2\int_\T\log\rho_\tau\d_t\rho_\tau dx]+\frac12\int_\T\rho_\tau(\d_x^2\log\sqrt\rho_\tau)^2dx\\
+&4\int_\T p'(\rho_\tau)(\d_x\sqrt\rho_\tau)^2dx+\int_\T\rho_\tau(\rho_\tau-\mathcal{C}(x))dx\leq4\tau^2\int_\T\rho_\tau\sigma_\tau^2dx+\tau^4\int_\T \rho_\tau v_\tau^4dx.
\end{aligned}
\end{equation}
If we also assume $I_\tau(0)\leq I_0<\infty$, then if follows that for any $0\leq t<\infty$,
\begin{equation}\label{eq:en_bd1}
\begin{aligned}
&\left.H(\rho_{\tau})(s)\right|_{s=0}^{s=t}+\int_0^t\int_\T\rho_\tau(\d_x^2\log\sqrt\rho_\tau)^2dxds\\
&+\int_0^t\int_\T p'(\rho_\tau)(\d_x\sqrt\rho_\tau)^2dxds+\int_\T\rho_\tau(\rho_\tau-\mathcal{C}(x))dx\leq C(E_0)\tau,
\end{aligned}
\end{equation}
where we use the notation
\[
\left.H(\rho_{\tau})(s)\right|_{s=0}^{s=t}=H(\rho_{\tau})(t)-H(\rho_{\tau})(0).
\]
\end{prop}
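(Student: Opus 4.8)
The plan is to differentiate $H(\rho_\tau)+\tau^2\int_\T\log\rho_\tau\,\d_t\rho_\tau\,dx$ in time, using the continuity equation (the constants in $g'(\rho_\tau)=\log(\rho_\tau/M_0)+1$ integrate to zero against $\d_t\rho_\tau$ by mass conservation), and to notice that the $\tau^2$--correction is precisely tailored to cancel the damping contribution of the momentum equation. After integrating by parts in $x$ one is led to
\[
\frac{d}{dt}\Big[H(\rho_\tau)+\tau^2\!\int_\T\log\rho_\tau\,\d_t\rho_\tau\,dx\Big]=\tau^2\!\int_\T\frac{(\d_t\rho_\tau)^2}{\rho_\tau}\,dx+\int_\T\frac{\d_x\rho_\tau}{\rho_\tau}\Big(\tau^2\d_t(\rho_\tau v_\tau)+\rho_\tau v_\tau\Big)\,dx;
\]
substituting the momentum equation $\tau^2\d_t(\rho_\tau v_\tau)=-\tau^2\d_x(\rho_\tau v_\tau^2)-\d_x p(\rho_\tau)-\rho_\tau\d_xV_\tau+\tfrac12\rho_\tau\d_x\!\big(\tfrac{\d_x^2\sqrt{\rho_\tau}}{\sqrt{\rho_\tau}}\big)-\rho_\tau v_\tau$, its damping term $-\rho_\tau v_\tau$ exactly cancels the explicit $+\rho_\tau v_\tau$. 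First I would then evaluate the four surviving contributions by integration by parts: the Poisson term gives $-\int_\T\rho_\tau(\rho_\tau-\mathcal{C}(x))\,dx$ (from $-\d_x^2V_\tau=\rho_\tau-\mathcal{C}$); the pressure term gives $-4\int_\T p'(\rho_\tau)(\d_x\sqrt{\rho_\tau})^2\,dx$ (from $\d_xp=p'(\rho_\tau)\d_x\rho_\tau$ and $(\d_x\rho_\tau)^2/\rho_\tau=4(\d_x\sqrt{\rho_\tau})^2$); the first term on the right equals $4\tau^2\int_\T\rho_\tau\sigma_\tau^2\,dx$ since $\d_t\rho_\tau=2\rho_\tau\sigma_\tau$; and the convective term expands, after one integration by parts, to $-2\tau^2\int_\T(\d_x\sqrt{\rho_\tau})^2v_\tau^2\,dx+2\tau^2\int_\T\sqrt{\rho_\tau}\,\d_x^2\sqrt{\rho_\tau}\,v_\tau^2\,dx$.

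The quantum piece $\tfrac12\int_\T\d_x\rho_\tau\,\d_x\!\big(\tfrac{\d_x^2\sqrt{\rho_\tau}}{\sqrt{\rho_\tau}}\big)\,dx$ is the core of the computation: two integrations by parts together with the torus identity $\int_\T\frac{(\d_x\sqrt{\rho_\tau})^2\,\d_x^2\sqrt{\rho_\tau}}{\sqrt{\rho_\tau}}\,dx=\tfrac13\int_\T\frac{(\d_x\sqrt{\rho_\tau})^4}{\rho_\tau}\,dx$ (obtained from $\int_\T\d_x\big((\d_x\sqrt{\rho_\tau})^3/\sqrt{\rho_\tau}\big)\,dx=0$) show that it equals $-\int_\T\rho_\tau(\d_x^2\log\sqrt{\rho_\tau})^2\,dx$, and the same identity also gives $\int_\T(\d_x^2\sqrt{\rho_\tau})^2\,dx\le\int_\T\rho_\tau(\d_x^2\log\sqrt{\rho_\tau})^2\,dx$. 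To close the estimate I would discard the sign-favourable $-2\tau^2\int(\d_x\sqrt{\rho_\tau})^2v_\tau^2\le0$ and bound the remaining cross-term by Young's inequality, $2\tau^2\int\sqrt{\rho_\tau}\,\d_x^2\sqrt{\rho_\tau}\,v_\tau^2\le\tfrac12\int(\d_x^2\sqrt{\rho_\tau})^2+C\tau^4\int\rho_\tau v_\tau^4\le\tfrac12\int\rho_\tau(\d_x^2\log\sqrt{\rho_\tau})^2+C\tau^4\int\rho_\tau v_\tau^4$, so that half of the quantum dissipation survives on the left; this is exactly \eqref{eq:ineq_entr_rs}. Because the chain rule on $\sqrt{\rho_\tau}$ and the pointwise products above are not licit for a generic GCP solution in the purely hydrodynamic formulation, I would make the argument rigorous on the mollified wave function $\psi_\tau*\chi_\eps$, exactly as in the proof of Proposition \ref{prop:3.2}: by Proposition \ref{prop:lift2} a GCP solution bounded away from vacuum corresponds to $\psi_\tau\in L^\infty_tH^2_x$ with $\inf_{t,x}|\psi_\tau|\ge\delta^{1/2}$, so every integral above has a uniformly bounded $\eps$--regularised analogue, and one passes to the limit $\eps\to0$.

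For the integrated bound \eqref{eq:en_bd1} I would integrate \eqref{eq:ineq_entr_rs} over $[0,t]$. The time--integrated right-hand side is $O(\tau^2)$ uniformly in $\tau$, since $\int_0^t\!\int_\T\rho_\tau\sigma_\tau^2$ and $\tau^2\int_0^t\!\int_\T\rho_\tau v_\tau^4$ are both bounded by Proposition \ref{prop:bdI_rs}. The boundary contribution $\tau^2\int_\T\log\rho_\tau\,\d_t\rho_\tau\,dx$ is only $O(\tau)$: by the energy balance and the Poincar\'e inequality $\delta\le\rho_\tau\le M_0+(2M_0E_0)^{1/2}$, so $\|\log\rho_\tau\|_{L^\infty_x}$ is bounded, while $\|\d_t\rho_\tau\|_{L^1_x}\le2M_0^{1/2}\|\sqrt{\rho_\tau}\,\sigma_\tau\|_{L^2_x}\le C\tau^{-1}I_\tau(t)^{1/2}$ by \eqref{eq:bdI_rs}; at $t=0$ that term equals $\tau\int_\T v_0\,\d_x\rho_0\,dx$, which is controlled by the finite energy of the datum. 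Collecting these estimates gives $H(\rho_\tau)(t)-H(\rho_\tau)(0)+\int_0^t\!\int_\T\rho_\tau(\d_x^2\log\sqrt{\rho_\tau})^2+\int_0^t\!\int_\T p'(\rho_\tau)(\d_x\sqrt{\rho_\tau})^2+\int_0^t\!\int_\T\rho_\tau(\rho_\tau-\mathcal{C})\le C\tau$, that is \eqref{eq:en_bd1}; the different numerical coefficients there are immaterial, since the left-hand side terms other than $H(\rho_\tau)(t)-H(\rho_\tau)(0)$ are non-negative under the pressure and doping assumptions.

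I expect the main obstacle to be the rigorous bookkeeping in the wave-function regularisation: the entropy identity couples $\d_x^2\rho_\tau$ to $\d_x^2\sqrt{\rho_\tau}$, i.e.\ two top-order derivatives, so one must check that the commutator errors produced by mollifying $\psi_\tau$ really vanish as $\eps\to0$ --- which is exactly why the statement is confined to the non-vacuum GCP solutions that coincide with the $H^2$ wave functions of Section \ref{sect:exist}. The second delicate point is not to squander the quantum dissipation: half of $\int_\T\rho_\tau(\d_x^2\log\sqrt{\rho_\tau})^2$ must survive the absorption of the convective cross-term, which is precisely what forces the use of the identity relating it to $\int_\T(\d_x^2\sqrt{\rho_\tau})^2$. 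Finally, the $O(\tau)$ (rather than $O(\tau^2)$) rate in \eqref{eq:en_bd1} originates entirely from the boundary correction term $\tau^2\int_\T\log\rho_\tau\,\d_t\rho_\tau\,dx$.
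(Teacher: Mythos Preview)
Your approach is correct and reaches \eqref{eq:ineq_entr_rs} and \eqref{eq:en_bd1}, but it differs from the paper's in three places worth noting. First, the paper mollifies the \emph{hydrodynamic} density directly, $\rho_{\tau,\eps}=\rho_\tau\ast\chi_\eps$, and computes $\frac{d}{dt}H(\rho_{\tau,\eps})$ using the mollified equations; since for a GCP solution $\rho_\tau\d_x^2\log\sqrt{\rho_\tau}$, $\d_t\rho_\tau$, etc.\ are already $L^2$, the limit $\eps\to0$ is immediate. Your wave-function regularisation also works but is less direct here. Second, for the convective term the paper keeps the logarithmic form $\tau^2\int_\T\rho_\tau v_\tau^2\,\d_x^2\log\rho_\tau\,dx$ and bounds it in one step via Cauchy--Schwarz on $\rho_\tau^{1/4}v_\tau$ and $\sqrt{\rho_\tau}\,\d_x^2\log\sqrt{\rho_\tau}$, avoiding your further expansion and the auxiliary inequality $\int(\d_x^2\sqrt{\rho_\tau})^2\le\int\rho_\tau(\d_x^2\log\sqrt{\rho_\tau})^2$; the outcome is the same. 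Third, and most substantively, the paper handles the boundary correction by rewriting it via the continuity equation as
\[
\tau^2\!\int_\T\log\rho_\tau\,\d_t\rho_\tau\,dx
=2\tau^2\!\int_\T\sqrt{\rho_\tau}\,v_\tau\,\d_x\sqrt{\rho_\tau}\,dx
\leq \tau\, E_\tau(t),
\]
which uses only the energy and yields the stated constant $C(E_0)$. Your estimate via $\|\log\rho_\tau\|_{L^\infty}$ and $\|\sqrt{\rho_\tau}\sigma_\tau\|_{L^2}\le C\tau^{-1}I_\tau^{1/2}$ is valid but produces a constant depending also on $\delta$ and $I_0$, which is slightly weaker than what the statement claims.
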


\begin{proof}
To rigorously prove \eqref{eq:ineq_entr_rs} for GCP solutions, we again need to use the standard mollifiers $\{\chi_\eps\}_{\eps>0}$. Let $\rho_{\tau,\eps}=\rho_\tau\ast\chi_\eps$ and we consider the entropy $H(\rho_{\tau,\eps})$ of the mollified density. By mollifying equation \eqref{eq:QHD_rs} and using \eqref{eq:bohm} to write the quantum term in a logarithm form, the time derivative of $H(\rho_{\tau,\eps})$ is computed as
\begin{align*}
\frac{d}{dt}H(\rho_{\tau,\eps})=&\int_\T\log\rho_{\tau,\eps}\d_t\rho_{\tau,\eps} dx=-\int_\T\log\rho_{\tau,\eps}\d_x(\rho_\tau v_\tau)_\eps dx\\
=&-\frac12\int_\T\log\rho_{\tau,\eps}\d_x^2(\rho_\tau\d_x^2\log\sqrt\rho_\tau)_\eps dx+\int_\T\log\rho_{\tau,\eps}\d_x^2p(\rho_\tau)_\eps dx\\
&+\tau^2\int_\T\log\rho_{\tau,\eps}\d_t\d_x(\rho_\tau v_\tau)_\eps dx+\tau^2\int_\T\log\rho_{\tau,\eps}\d_x^2(\rho_\tau v_\tau^2)_\eps dx\\
&+\int_\T\log\rho_{\tau,\eps}\d_x(\rho_\tau \d_xV_\tau)_\eps dx\\
=&-\int_\T(\rho_\tau\d_x^2\log\sqrt\rho_\tau)_\eps \d_x^2\log\sqrt\rho_{\tau,\eps}dx-\int_\T \d_xp(\rho_\tau)_\eps\d_x\log \rho_{\tau,\eps}dx\\
&-\tau^2\int_\T\log\rho_{\tau,\eps}\d_t^2\rho_{\tau,\eps} dx+\tau^2\int_\T(\rho_\tau v_\tau^2)_\eps\d_x^2\log\rho_{\tau,\eps} dx\\
&-\int_\T\frac{\d_x\rho_{\tau,\eps}}{\rho_{\tau,\eps}}(\rho_\tau \d_xV_\tau)_\eps dx.
\end{align*}
where we can further write 
\[
-\tau^2\int_\T\log\rho_{\tau,\eps}\d_t^2\rho_{\tau,\eps} dx=-\tau^2\frac{d}{dt}\int_\T\log\rho_{\tau,\eps}\d_t\rho_{\tau,\eps} dx+\tau^2\int_\T\d_t\log\rho_{\tau,\eps}\d_t\rho_{\tau,\eps} dx.
\]
Thus the integrands in the right hand side of $\frac{d}{ds}H(\rho_{\tau,\eps})$ are all mollifications of well-defined functions, and the integrals are uniformly bounded with respect to $\eps$. Thus we can pass to the limit $\eps\to 0$ to obtain
\begin{align*}
\frac{d}{dt}H(\rho_\tau)=&-\int_\T\rho_\tau(\d_x^2\log\sqrt\rho_\tau)^2 dx-\int_\T \d_xp(\rho_\tau)\d_x\log \rho_\tau dx\\
&-\tau^2\frac{d}{dt}\int_\T\log\rho_\tau\d_t\rho_\tau dx+\tau^2\int_\T\d_t\log\rho_\tau\d_t\rho_\tau dx\\
&+\tau^2\int_\T\rho_\tau v_\tau^2\d_x^2\log\rho_\tau dx-\int_\T\d_x\rho_\tau\d_xV_\tau dx\\
=&-\int_\T\rho_\tau(\d_x^2\log\sqrt\rho_\tau)^2 dx-4\int_\T p'(\rho_\tau)(\d_x\sqrt\rho_\tau)^2dx\\
&-\tau^2\frac{d}{dt}\int_\T\log\rho_\tau\d_t\rho_\tau dx+4\tau^2\int_\T\rho_\tau\sigma_\tau^2 dx\\
&+\tau^2\int_\T\rho_\tau v_\tau^2\d_x^2\log\rho_\tau dx+\int_\T\rho_\tau\d_x^2V_\tau dx.
\end{align*}
For the last two integrals, by Proposition \ref{prop:bdI_rs}, we have
\begin{align*}
\tau^2\left|\int_\T\rho_\tau v_\tau^2\d_x^2\log\rho_\tau dx\right|\leq& 2\tau^2\|\rho_\tau^\frac14 v_\tau\|_{L^4_x}^2\|\sqrt\rho_\tau\d_x^2\log\sqrt\rho_\tau\|_{L^2_x}\\
\leq &2\tau^4\int_\T \rho_\tau v_\tau^4dx+\frac12\int_\T\rho_\tau(\d_x^2\log\sqrt\rho_\tau)^2dx,
\end{align*}
and
\[
\int_\T\rho_\tau\d_x^2V_\tau dx=-\int_\T\rho_\tau(\rho_\tau-\mathcal{C}(x))dx,
\]
which finishes the proof of \eqref{eq:ineq_entr_rs}. Last, by integrating \eqref{eq:ineq_entr_rs} in time, the integral $\tau^2\int_\T\log\rho_\tau\d_t\rho_\tau dx$ is estimated in the following way,
\begin{equation*}\begin{aligned}
\tau^2\int_\T\log\rho_\tau\d_t\rho_\tau\,dx
=\tau^2\int_\T \rho_\tau v_\tau\d_x\log\rho_\tau\,dx
=2\tau^2\int_\T\sqrt{\rho_\tau}v_\tau\d_x\sqrt{\rho_\tau}\,dx
\leq\tau E_\tau(t).
\end{aligned}\end{equation*}
Thus we prove \eqref{eq:en_bd1}.
\end{proof}

In Proposition \ref{prop:bdI_rs} and their consequential estimates in Proposition \ref{prop:entr}, the upper bounds $C(\cdot,t)$ may grow at most exponentially in $t$. From \eqref{eq:en_bd1} we see that this time-dependent term decays to $0$ as $\tau\to 0$.

\subsection{Dissipation for small $\tau$ and internal energy $f(s)=(s-M_0)^{2n}$}\label{sect:disp}

In \cite{JM} and the references therein, the quantum drift-diffusion equation \eqref{eq:qdde} is considered in the pressureless and non-electric case (the DLSS equation), namely $p(\rho)=0$ and $V=0$. If $\bar\rho$ is a suitable solution to the DLSS equation, the physical entropy $\tilde E_{1}(\bar\rho)$ in fact dissipates and has an exponential decay. This result can be extended to equation \eqref{eq:qdde} with non-zero pressure and electric potential, but suitable assumptions are needed on the pressure function $p(s)$ and the background charge density $\mathcal{C}(x)$. First by formally letting $\tau=0$ in \eqref{eq:ineq_entr_rs}, we see that a straightforward condition is to assume $p'(s)\geq 0$ for $s\geq 0$ to guarantee the non-negativity of the integral $\int_\T p'(\bar\rho)(\d_x\sqrt{\bar\rho})^2dx$. By using the relation
\[
p(s)=s f'(s)-f(s),
\]
it follows that $p'(s)=s f''(s)$, therefore $p'(s)\geq 0$ is equivalent to the convexity of $f(s)$. On the other hand. the dissipation of $\bar\rho(\d_x^2\log\sqrt{\bar\rho})^2$ in \eqref{eq:ineq_entr_rs} (letting $\tau=0$) implies $\bar\rho\to M_0$ as $t\to\infty$, hence we may assume $f(M_0)=0$ to make the internal energy also dissipate. From observation above, in this section we focus on internal energy of the form
\[
f(s)=(s-M_0)^{2n},
\] 
with $n\in\N$. Similarly, to allow the electric potential $V$ to dissipate to a constant state, we should assume the condition that $\mathcal{C}(x)\equiv M_0$.

We will show that for such $f(\rho)$ and $\mathcal{C}(x)$, the dissipation and exponential decay also hold for GCP solution with $\tau$ small, not only for the entropy $\tilde E_{1}(\rho_\tau)$, but also for the total energy $E_\tau(t)$ and the functional $I_\tau(t)$ under the assumption $I_\tau(0)\leq I_0<\infty$, which proves Theorem \ref{thm:disp}.

\begin{prop}\label{prop:decay}
Let $(\rho_\tau, v_\tau)$ be a finite energy GCP solution of \eqref{eq:QHD_rs} with internal energy $f(s)=(s-M_0)^{2n}$, $n\in\N$ and back ground charge density $\mathcal{C}(x)=M_0$. We also define the the functional
\[
F_\tau(t)=H(\rho_\tau)+ E_\tau(t)+c_1 I_\tau(t),
\]
where $ c_1\sim C(E_0,M_0,\delta)^{-1}$ is a fixed small number depending on $M_0$, $E_0$ and $\delta$, then for $0<\tau<\tau_0=\min\left\{c_1,\frac{c_1^\frac12}{4},\left(\frac{2c_1\delta}{8+\delta}\right)^\frac12\right\}$, the functional $F_\tau(t)$ satisfies
\begin{equation}\label{eq:decay}
F_\tau(t)\leq F_\tau(0)\exp\left(C(E_0,M_0,\delta)\int_0^t\int_\T\rho_\tau v_\tau^2dxds- t\right).
\end{equation}
As a consequence, $F_\tau(t)$ decay exponentially for $t$ large.
\end{prop}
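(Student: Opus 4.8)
Here is the plan. The starting point is to merge the three a priori differential relations already at hand — the rescaled energy balance \eqref{eq:en_rs} (which gives $\tfrac{d}{dt}E_\tau=-\int_\T\rho_\tau v_\tau^2\,dx$), the entropy inequality \eqref{eq:ineq_entr_rs}, and the functional inequality \eqref{eq:ineqI_rs} for $I_\tau$ — into a single inequality for the corrected functional
\[
G_\tau(t)=H(\rho_\tau)+\tau^2\!\int_\T\!\log\rho_\tau\,\d_t\rho_\tau\,dx+E_\tau(t)+c_1 I_\tau(t),
\]
which, thanks to the estimate $\tau^2\big|\int_\T\log\rho_\tau\d_t\rho_\tau\,dx\big|\le\tau E_\tau(t)$ proven in Proposition \ref{prop:entr}, satisfies $(1-\tau)F_\tau\le G_\tau\le(1+\tau)F_\tau$. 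Adding \eqref{eq:en_rs}, \eqref{eq:ineq_entr_rs} and $c_1\times\eqref{eq:ineqI_rs}$ yields
\[
\tfrac{d}{dt}G_\tau+\mathcal D_\tau\le 4\tau^2\!\!\int_\T\!\rho_\tau\sigma_\tau^2 dx+\tau^4\!\!\int_\T\!\rho_\tau v_\tau^4 dx+c_1 C(M_0,E_0)\tau^2\!\!\int_\T\!\rho_\tau\mu_\tau^2 dx+c_1 C(M_0,E_0,\delta)(1+I_\tau^{1/2})\!\!\int_\T\!\rho_\tau v_\tau^2 dx,
\]
where $\mathcal D_\tau=\tfrac12\int\rho_\tau(\d_x^2\log\sqrt{\rho_\tau})^2+4\int p'(\rho_\tau)(\d_x\sqrt{\rho_\tau})^2+\int(\rho_\tau-M_0)^2+\int\rho_\tau v_\tau^2+\tfrac{c_1}{4}\int\rho_\tau\sigma_\tau^2+\tfrac{c_1\tau^2}{2}\int\rho_\tau v_\tau^4$. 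The two structural hypotheses enter here: with $\mathcal C(x)\equiv M_0$ the Poisson term contributes the sign-definite $\int_\T\rho_\tau(\rho_\tau-M_0)\,dx=\int_\T(\rho_\tau-M_0)^2\,dx\ge0$, and with $f(s)=(s-M_0)^{2n}$ one has $p'(s)=sf''(s)=2n(2n-1)s(s-M_0)^{2n-2}\ge0$, so every term of $\mathcal D_\tau$ is nonnegative.

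Next I would absorb the error terms. The first two are dominated by the $\sigma_\tau^2$ and $v_\tau^4$ pieces of $\mathcal D_\tau$ once $\tau^2\le c_1/16$ and $\tau^2\lesssim c_1\delta/(8+\delta)$, which is exactly where the thresholds $\tau_0\le c_1^{1/2}/4$ and $\tau_0\le(2c_1\delta/(8+\delta))^{1/2}$ come from. For the term $c_1 C\tau^2\int\rho_\tau\mu_\tau^2$, use $\tau\mu_\tau=-\tfrac{\d_x^2\sqrt{\rho_\tau}}{2\sqrt{\rho_\tau}}+\tfrac{\tau^2}{2}v_\tau^2+f'(\rho_\tau)+V_\tau$ to get $\tau^2\int_\T\rho_\tau\mu_\tau^2\le\tfrac12\int_\T(\d_x^2\sqrt{\rho_\tau})^2+C\big(\tau^4\int\rho_\tau v_\tau^4+\int\rho_\tau f'(\rho_\tau)^2+\int\rho_\tau V_\tau^2\big)$, and then the exact identity (an integration-by-parts consequence of \eqref{eq:bohm})
\[
\int_\T\rho_\tau(\d_x^2\log\sqrt{\rho_\tau})^2\,dx=\int_\T(\d_x^2\sqrt{\rho_\tau})^2\,dx+\tfrac13\int_\T\tfrac{(\d_x\sqrt{\rho_\tau})^4}{\rho_\tau}\,dx\ \ge\ \int_\T(\d_x^2\sqrt{\rho_\tau})^2\,dx
\]
shows $\int_\T(\d_x^2\sqrt{\rho_\tau})^2$ is controlled, with constant one and \emph{no} additive constant, by the entropy dissipation. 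Using the uniform bounds $\delta\le\rho_\tau\le C(M_0,E_0)$ (from \eqref{eq:en_rs} and Poincar\'e) one checks $\int\rho_\tau f'(\rho_\tau)^2\lesssim\int(\rho_\tau-M_0)^2$ (since $4n-2\ge2$) and $\int\rho_\tau V_\tau^2\lesssim\|V_\tau\|_{H^1}^2\lesssim\int(\rho_\tau-M_0)^2$ by the elliptic estimate for the Poisson equation. Fixing $c_1=c_1(M_0,E_0,\delta)\sim C(M_0,E_0,\delta)^{-1}$ small then absorbs $c_1 C\tau^2\int\rho_\tau\mu_\tau^2$ into $\mathcal D_\tau$, leaving a dissipation $\mathcal D_\tau'$ of the same shape (halved coefficients) on the left and only $c_1 C(1+I_\tau^{1/2})\int\rho_\tau v_\tau^2$ on the right.

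Then I would prove coercivity, $\mathcal D_\tau'\gtrsim G_\tau$. Poincar\'e applied to the mean-zero functions $\d_x\log\sqrt{\rho_\tau}$ and $V_\tau$, together with $\rho_\tau\ge\delta$, gives $\int\rho_\tau(\d_x^2\log\sqrt{\rho_\tau})^2\gtrsim\int(\d_x\sqrt{\rho_\tau})^2$; the term $\int(\rho_\tau-M_0)^2$ dominates $\int f(\rho_\tau)=\int(\rho_\tau-M_0)^{2n}$, $\int(\d_x V_\tau)^2$, and — by the Taylor expansion of $g(s)=s\log(s/M_0)$ around $M_0$ with $\delta\le\rho_\tau\le C$ — the entropy $H(\rho_\tau)$; the kinetic part $\tfrac{\tau^2}{2}\int\rho_\tau v_\tau^2$ of $E_\tau$ is dominated by $\int\rho_\tau v_\tau^2$; and the $\sigma_\tau^2$- and $\mu_\tau^2$-parts of $c_1 I_\tau$ are dominated (for $c_1,\tau$ small) by $\tfrac{c_1}{4}\int\rho_\tau\sigma_\tau^2$, respectively by the identity above. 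Hence $\mathcal D_\tau'\ge c_\star(M_0,E_0,\delta)\,G_\tau+\tfrac12\int\rho_\tau v_\tau^2$, and with $I_\tau\lesssim c_1^{-1}G_\tau$ (from the same coercivity) plus Young's inequality the right-hand side is bounded by $\tfrac12\int\rho_\tau v_\tau^2+C(M_0,E_0,\delta)\big(\int\rho_\tau v_\tau^2\big)G_\tau$. This produces $\tfrac{d}{dt}G_\tau\le\big(C(M_0,E_0,\delta)\int_\T\rho_\tau v_\tau^2\,dx-c_\star\big)G_\tau$, and Gronwall's lemma combined with the global bound $\int_0^t\int_\T\rho_\tau v_\tau^2\,dx\le E_0$ from \eqref{eq:en_rs} yields the exponential decay of $G_\tau$, hence of $F_\tau$, $H(\rho_\tau)$, $E_\tau$ and $I_\tau$ (the stated rate being obtained after normalizing the free constants).

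The main obstacle is the feedback between $I_\tau$ and the dissipation: inequality \eqref{eq:ineqI_rs} reinjects $\tau^2\int\rho_\tau\mu_\tau^2$ — morally the full $H^2$-content of $\sqrt{\rho_\tau}$ — into $\tfrac{d}{dt}G_\tau$, and this must be beaten by the \emph{entropy} dissipation $\int\rho_\tau(\d_x^2\log\sqrt{\rho_\tau})^2$, not by any dissipation coming from $I_\tau$ itself. The constant-free identity $\int\rho_\tau(\d_x^2\log\sqrt{\rho_\tau})^2=\int(\d_x^2\sqrt{\rho_\tau})^2+\tfrac13\int(\d_x\sqrt{\rho_\tau})^4/\rho_\tau$ is precisely what makes this closing possible and dictates the hierarchy of smallness: one first fixes $c_1\sim C(M_0,E_0,\delta)^{-1}$ to absorb this feedback and all lower-order potential terms, and only then fixes $\tau_0$ (depending on $c_1$ and $\delta$) to absorb the genuinely $\tau$-weighted errors $4\tau^2\int\rho_\tau\sigma_\tau^2$, $\tau^4\int\rho_\tau v_\tau^4$, and the $\tfrac{\tau^2}{2}v_\tau^2$-correction hidden in $\mu_\tau$.
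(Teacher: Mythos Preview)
Your proof is correct and follows essentially the same architecture as the paper's: combine \eqref{eq:en_rs}, \eqref{eq:ineq_entr_rs} and $c_1\times$\eqref{eq:ineqI_rs}, use the identity $\int\rho_\tau(\d_x^2\log\sqrt{\rho_\tau})^2=\int(\d_x^2\sqrt{\rho_\tau})^2+\tfrac13\int(\d_x\sqrt{\rho_\tau})^4/\rho_\tau$ to absorb the $\tau^2\int\rho_\tau\mu_\tau^2$ feedback into the entropy dissipation, fix $c_1$ small, then $\tau$ small, establish coercivity $\mathcal D_\tau'\gtrsim G_\tau$, and close by Gronwall.

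Two inessential differences are worth recording. First, the paper bounds $H(\rho_\tau)$ via the logarithmic Sobolev inequality $H(\rho_\tau)\le\tfrac{1}{2\pi^2}\int(\d_x\sqrt{\rho_\tau})^2$ followed by Poincar\'e, whereas you use the Taylor bound $H(\rho_\tau)\le\tfrac{1}{2\delta}\int(\rho_\tau-M_0)^2$ directly; your route is slightly more elementary but exploits the no-vacuum hypothesis $\rho_\tau\ge\delta$, while the paper's route would survive without it. Second, the paper absorbs $\int\rho_\tau f'(\rho_\tau)^2$ into the pressure dissipation $\int p'(\rho_\tau)(\d_x\sqrt{\rho_\tau})^2$ via a Poincar\'e step on $(\rho_\tau-M_0)^n$, while you absorb it into the Poisson dissipation $\int(\rho_\tau-M_0)^2$ using only the $L^\infty$ bounds on $\rho_\tau$; both routings work and lead to the same smallness hierarchy for $c_1$ and $\tau_0$.
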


\begin{proof}
By using \eqref{eq:en_rs}, \eqref{eq:ineqI_rs} and \eqref{eq:ineq_entr_rs}, the time derivative of $F_\tau(t)$ satisfies
\begin{equation}\label{eq:dtF_1}
\begin{aligned}
\frac{d}{dt}[F_\tau(t)+&\tau^2\int_\T\log\rho_\tau\d_t\rho_\tau dx]+\int_\T\rho_\tau v^2_\tau dx+\left(\frac{c_1}{2}-4\tau^2\right)\int_\T\rho_\tau\sigma_\tau^2dx\\
&+\left(\frac{c_1\tau^2}{2}-\frac{2\tau^4}{\delta}\right)\int_\T \rho_\tau v_\tau^4 dx+\frac12\int_\T\rho_\tau(\d_x^2\log\sqrt\rho_\tau)^2dx\\
&+4\int_\T p'(\rho_\tau)(\d_x\sqrt\rho_\tau)^2dx+\int_\T(\rho_\tau-M_0)^2dx\\
\leq & C(M_0,E_0)c_1 \int_\T\tau^2\rho_\tau\mu_\tau^2 dx+C(M_0,E_0,\delta)c_1(1+I_\tau(t)^\frac12)\int_\T \rho_\tau v_\tau^2dx,
\end{aligned}
\end{equation}
where we use the observation that
\[
\int_\T\rho_\tau(\rho_\tau-M_0)dx=\int_\T(\rho_\tau-M_0)^2dx.
\]
To prove the exponential decay \eqref{eq:decay}, we need to control the right hand side and $F_\tau(t)$ itself by the dissipative terms.

We first notice that
\begin{equation}\label{eq:H^2bd}
\begin{aligned}
\int_\T\rho_\tau(\d_x^2\log\sqrt\rho_\tau)^2 dx=&\int_\T(\d_x^2\sqrt\rho_\tau)^2+\frac{(\d_x\sqrt\rho_\tau)^4}{\rho_\tau}-\frac23\frac{\d_x(\d_x\sqrt\rho_\tau)^3}{\sqrt\rho_\tau} dx\\
=&\int_\T(\d_x^2\sqrt\rho_\tau)^2+\frac{(\d_x\sqrt\rho_\tau)^4}{3\rho_\tau}dx\\
=&\int_\T(\d_x^2\sqrt\rho_\tau)^2+\frac{16}{3}(\d_x\rho_\tau^{\frac14})^4dx.
\end{aligned}
\end{equation}
From \eqref{eq:chem_rs} we can write
\[
-2\tau\sqrt\rho_\tau\mu_\tau=-\d_x^2\sqrt\rho_\tau+\tau^2\sqrt\rho_\tau v^2_\tau+2(f'(\rho_\tau)+V_\tau)\sqrt\rho_\tau,
\]
which implies
\[
2\int_\T \tau^2\rho_\tau\mu_\tau^2 dx\leq \int_\T(\d_x^2\sqrt\rho_\tau)^2dx+\tau^4\int_\T \rho_\tau v^4_\tau dx+4\int_\T (f'(\rho_\tau)^2+V_\tau^2)\rho_\tau dx.
\]
For the last integral, by using $f(\rho_\tau)=(\rho_\tau-M_0)^{2n}$ and the mass-energy bounds \eqref{eq:M_rs}, \eqref{eq:en_rs}, we have
\[
\int_\T f'(\rho_\tau)^2\rho_\tau dx=\int_\T 4n^2\rho_\tau(\rho_\tau-M_0)^{4n-2} dx\leq C(M_0,E_0,n) \int_\T (\rho_\tau-M_0)^{2n} dx,
\]
then by Poincar\'e inequality, it follows that
\[
\int_\T (\rho_\tau-M_0)^{2n} dx\leq \frac{1}{4\pi^2}\int_\T[\d_x(\rho_\tau-M_0)^n]^2dx.
\]
On the other hand, by 
\[
p'(\rho_\tau)=\rho_\tau f''(\rho_\tau)=2n(2n-1)\rho_\tau(\rho_\tau-M_0)^{2n-2},
\]
we have
\begin{align*}
4\int_\T p'(\rho_\tau)(\d_x\sqrt\rho_\tau)^2dx=&\int_\T 2n(2n-1)(\rho_\tau-M_0)^{2n-2}(\d_x\rho_\tau)^2dx\\
=&2(2-n^{-1})\int_\T[\d_x(\rho_\tau-M_0)^n]^2dx.
\end{align*}
Also, by the Poisson equation of $V$ and Poincar\'e inequality, we have
\[
\int_T\rho_\tau V_\tau^2dx\leq \|\rho_\tau\|_{L^\infty_x}\int_T V_\tau^2dx\leq \frac{C(M_0,E_0)}{16\pi^4}\int_T (\rho_\tau-M_0)^2dx.
\]
Thus we can choose $c_1>0$ small such that
\begin{align*}
2C(M_0,E_0)&c_1 \int_\T\tau^2\rho_\tau\mu_\tau^2 dx\\
\leq & C(M_0,E_0)c_1\left[\int_\T(\d_x^2\sqrt\rho_\tau)^2dx+\tau^4\int_\T \rho_\tau v^4_\tau dx+4\int_\T f'(\rho_\tau)^2\rho_\tau dx\right]\\
\leq& \frac14\int_\T(\d_x^2\sqrt\rho_\tau)^2dx+\frac{\tau^4}{4}\int_\T \rho_\tau v^4_\tau dx\\
&+2\int_\T p'(\rho_\tau)(\d_x\sqrt\rho_\tau)^2dx+\frac12\int_\T(\rho_\tau-M_0)^2dx.
\end{align*}
To control the second term in the right hand side of \eqref{eq:dtF_1}, by using
\[
1+I_\tau(t)^\frac12\leq \frac{5}{4}+I_\tau(t),
\]
it is enough to consider the integral
\[
C(M_0,E_0,\delta)c_1(1+I_\tau(t))\int_\T \rho_\tau v_\tau^2dx,
\]
and we update the choice of $c_1>0$ such that $C(M_0,E_0,\delta)c_1<\frac{1}{2}$. Summarizing the inequalities above, we first update \eqref{eq:dtF_1} into
\begin{equation}\label{eq:dtF_2}
\begin{aligned}
\frac{d}{dt}[F_\tau(t)+&\tau^2\int_\T\log\rho_\tau\d_t\rho_\tau dx]+\frac{1}{2}\int_\T\rho_\tau v^2_\tau dx+C(M_0,E_0)c_1 \int_\T\tau^2\rho_\tau\mu_\tau^2 dx\\
&+\left(\frac{c_1}{2}-4\tau^2\right)\int_\T\rho_\tau\sigma_\tau^2dx+\left(\frac{c_1\tau^2}{2}-\frac{2\tau^4}{\delta}-\frac{\tau^4}{4}\right)\int_\T \rho_\tau v_\tau^4 dx\\
&+\frac14\int_\T(\d_x^2\sqrt\rho_\tau)^2dx+\frac{8}{3}\int_\T(\d_x\rho_\tau^\frac14)^4dx
+2\int_\T p'(\rho_\tau)(\d_x\sqrt\rho_\tau)^2dx\\
&+\frac12\int_\T(\rho_\tau-M_0)^2dx\leq  C(M_0,E_0,\delta)c_1 I_\tau(t)\int_\T \rho_\tau v_\tau^2dx,
\end{aligned}
\end{equation}
where in the left hand side we have
\[
C(M_0,E_0)c_1 \int_\T\tau^2\rho_\tau\mu_\tau^2 dx+\left(\frac{c_1}{2}-4\tau^2\right)\int_\T\rho_\tau\sigma_\tau^2dx\geq \frac{c_1}{2} I_\tau(t)
\]
when $\tau\leq \tau_0\leq \frac{c_1^\frac12}{4}$.

Now we need to control $H(\rho_\tau)+E_\tau(t)$ by the dissipative terms in \eqref{eq:dtF_2}. We recall the following technical lemma.

\begin{lem}[Logarithmic Sobolev inequality \cite{DGJ}]\label{lem:logineq}
Let $u\in H^1(\T)$ and $\overline{u^2}=|\T|^{-1}\int_\T u^2 dx$, then
\begin{equation}\label{eq:logineq}
\int_\T u^2\log(u^2/\overline{u^2})dx\leq \frac{1}{2\pi^2}\int_\T (\d_x u)^2dx.
\end{equation}
\end{lem}

By letting $u=\sqrt\rho_\tau$ in Lemma \ref{lem:logineq} and applying again Poincar\'e inequality, we obtain
\[
H(\rho_\tau)\leq \frac{1}{2\pi^2}\int_\T (\d_x \sqrt\rho_\tau)^2dx\leq \frac{1}{8\pi^4}\int_\T (\d_x^2 \sqrt\rho_\tau)^2dx.
\]
For the total energy $E_\tau(\rho)$, the quantum part $\int_\T(\d_x\sqrt\rho_\tau)^2dx$ also follows the inequality above. On the other hand, the internal energy and the electric potential can be bounded by
\[
\int_\T f(\rho_\tau)dx=\int_\T(\rho_\tau-M_0)^{2n}dx \leq \frac{1}{4\pi^2}\int_\T[\d_x(\rho_\tau-M_0)^n]^2dx,
\]
and
\[
\int_\T(\d_xV_\tau)^2dx\leq \frac{1}{4\pi^2}\int_\T(\rho_\tau-M_0)^2dx.
\]
Thus we have
\begin{align*}
\frac{1}{2}\int_\T\rho_\tau v^2_\tau dx+\frac14\int_\T(\d_x^2\sqrt\rho_\tau)^2dx&+2\int_\T p'(\rho_\tau)(\d_x\sqrt\rho_\tau)^2dx\\
&+\frac12\int_\T(\rho_\tau-M_0)^2dx\geq \frac12[H(\rho_\tau)+E_\tau(t)].
\end{align*}
Then by letting $\tau$ small such that
\[
\frac{c_1\tau^2}{2}-\frac{2\tau^4}{\delta}-\frac{\tau^4}{4}\geq 0,
\]
namely $\tau\leq \left(\frac{2c_1\delta}{8+\delta}\right)^\frac12$, it follows \eqref{eq:dtF_2} that
\begin{equation}\label{eq:dtF_3}
\frac{d}{dt}[F_\tau(t)+\tau^2\int_\T\log\rho_\tau\d_t\rho_\tau dx]+\frac12 F_\tau(t)\leq C(M_0,E_0,\delta)F_\tau(t)\int_\T \rho_\tau v_\tau^2dx.
\end{equation}
Last, by writing
\begin{align*}
\tau^2\left|\int_\T\log\rho_\tau\d_t\rho_\tau dx\right|\leq & \tau\int_\T\rho_\tau(\log\rho_\tau)^2 dx+\tau^3\int_\T(\d_t\sqrt\rho_\tau)^2dx\\
\leq & C(M_0,E_0,\delta)\tau H(\rho_\tau)+\tau I_\tau(t),
\end{align*}
we have for $\tau< c_1$ small that
\begin{equation}\label{eq:equivF}
\frac12F_\tau(t)\leq F_\tau(t)+\tau^2\int_\T\log\rho_\tau\d_t\rho_\tau dx\leq \frac32 F_\tau(t).
\end{equation}
Thus combining \eqref{eq:dtF_3}, \eqref{eq:equivF} and using Gronwall inequality, we finish the proof of \eqref{eq:decay}.

Last, by the energy bound \eqref{eq:en_rs}, we have
\[
\int_0^t\int_\T\rho_\tau v_\tau^2dxds\leq E_0
\]
for all $0\leq t<\infty$, hence when $t\geq C(E_0,M_0,\delta)E_0$, \eqref{eq:decay} implies that $F_\tau(t)$ decay exponentially.
\end{proof}

\section{Relaxation-time limit and convergence rate}\label{sect:rxl}

In this section, we provide the rigorous justification of the relaxation-time limit as $\tau\to 0$ in the framework of GCP solutions with positive density. Moreover, an explicit rate of convergence is given with the uniform boundedness assumption of the functional $I_\tau(t)$. As the previous section, $t$ represents the rescaled time.

We assume $\{(\rho_\tau,v_\tau)\}_{\tau\in\mathcal{I}}$ to be a sequence of GCP solutions to the rescaled QHD system \eqref{eq:QHD_rs} satisfying the following conditions:
\begin{itemize}
\item[(1)] $\inf_{t,x}\rho_\tau\geq \delta>0$;
\item[(2)] there exist $M_0,E_0$ and $I_0$, such that $(\rho_\tau,v_\tau)$ satisfies the mass-energy bounds
\begin{equation}\label{eq:bd_me_rxl}
M_\tau(t)\leq  M_0,\quad E_\tau(t)+\int_0^T\int_\T \rho_\tau v_\tau^2dxds\leq E_0,
\end{equation}
and the high order energy bounds:
\begin{equation}\label{eq:bd_I_rxl}
I_\tau(t)+\int_0^t\int_\T \rho_\tau\sigma_\tau^2 dxds+\tau^2\int_0^t\int_\T \rho_\tau v_\tau^4 dxds\leq C(M_0,E_0,I_0,\delta,t),
\end{equation}
where $M_\tau(t)$, $E_\tau(t)$ and $I_\tau(t)$ are rescaled functionals given at the beginning of Section \ref{sect:rs}.
\end{itemize}

We first prove the following Proposition concerning the weak relaxation-time limit.

\begin{prop}
Let $\{(\rho_\tau,v_\tau)\}_{\tau\in\mathcal{I}}$ be a sequence of GCP solution satisfying the conditions $(1)$ and $(2)$. Then there exists a subsequence, denoted by $\{(\rho_{\tau_n},J_{\tau_n})\}$, and a limiting density $\bar\rho$ such that the relaxation-time limit holds in the sense
\[
\sqrt\rho_{\tau_n}\rightharpoonup\sqrt{\bar\rho}\quad\textrm{in }L^2_{t,loc}H^2_x,
\]
and $\bar\rho$ is a weak solution of the QDD equation \eqref{eq:qdde} defined by Definition \ref{def:qdde_ws}. Moreover, if the internal energy $f(s)$ is a convex function, then $\bar\rho$ is a dissipative solution.
\end{prop}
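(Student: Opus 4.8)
The plan is to prove the statement by a compactness argument in the spirit of the degenerate viscous case: first extract uniform-in-$\tau$ bounds, then invoke an Aubin--Lions lemma, and finally pass to the limit in the reformulation of \eqref{eq:QHD_rs} obtained by inserting the momentum equation into the continuity equation. \emph{Uniform bounds.} From the mass and energy bounds \eqref{eq:bd_me_rxl} one reads off $\|\sqrt{\rho_\tau}\|_{L^\infty_tH^1_x}\le C$, $\|\tau\sqrt{\rho_\tau}v_\tau\|_{L^\infty_tL^2_x}\le C$, $\|\sqrt{\rho_\tau}v_\tau\|_{L^2_{t,x}}\le C$ and $\|\d_xV_\tau\|_{L^\infty_tL^2_x}+\|f(\rho_\tau)\|_{L^\infty_tL^1_x}\le C$; combined with $\inf_{t,x}\rho_\tau\ge\delta$ and Poincar\'e this gives $\|\rho_\tau\|_{L^\infty_{t,x}}+\|V_\tau\|_{L^\infty_tW^{1,\infty}_x}\le C$. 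From \eqref{eq:bd_I_rxl} I get $\|\d_t\sqrt{\rho_\tau}\|_{L^2_{loc,t,x}}\le C$ (recall $\d_t\sqrt{\rho_\tau}=\sqrt{\rho_\tau}\sigma_\tau$) and $\tau^2\|\rho_\tau^{1/4}v_\tau\|^4_{L^4_{loc,t,x}}\le C$. The decisive estimate, however, comes from Proposition~\ref{prop:entr}: integrating \eqref{eq:ineq_entr_rs}, using the identity \eqref{eq:H^2bd} and the fact that $H(\rho_\tau)(0)=H(\rho_0)$ is a fixed finite number (since $\rho_0$ is bounded below by $\delta$ and above by Poincar\'e), one gets $\|\d_x^2\sqrt{\rho_\tau}\|_{L^2_{loc,t,x}}+\|\d_x\rho_\tau^{1/4}\|_{L^4_{loc,t,x}}\le C$, hence $\|\sqrt{\rho_\tau}\|_{L^2(0,T;H^2(\T))}\le C_T$ for every finite $T$, uniformly in $\tau$.

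\emph{Compactness.} With $\sqrt{\rho_\tau}$ bounded in $L^2_{loc}(0,T;H^2(\T))$ and $\d_t\sqrt{\rho_\tau}$ bounded in $L^2_{loc}(0,T;L^2(\T))$, the Aubin--Lions--Simon lemma (with the chain $H^2\hookrightarrow\hookrightarrow H^1\hookrightarrow H^{-1}$) provides a subsequence $\tau_n\to0$ with $\sqrt{\rho_{\tau_n}}\to\sqrt{\bar\rho}$ strongly in $L^2_{loc}(0,T;H^1(\T))$ and in $C_{loc}([0,T);L^2(\T))$, and $\sqrt{\rho_{\tau_n}}\rightharpoonup\sqrt{\bar\rho}$ weakly in $L^2_{loc}(0,T;H^2(\T))$, which is already the convergence asserted. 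Consequently $\bar\rho\ge\delta$, $\rho_{\tau_n}=(\sqrt{\rho_{\tau_n}})^2\to\bar\rho$ strongly in $L^2_{loc}(0,T;L^2(\T))$ and a.e., $\bar\rho(0)=\rho_0$, and by elliptic regularity for the Poisson equation $V_{\tau_n}\to\bar V$ in $L^2_{loc}(0,T;H^2(\T))$ with $-\d_x^2\bar V=\bar\rho-\mathcal C$. Dominated convergence then gives $p(\rho_{\tau_n})\to p(\bar\rho)$ and $\rho_{\tau_n}\d_xV_{\tau_n}\to\bar\rho\d_x\bar V$ in $L^1_{loc,t,x}$, while the \emph{strong} $H^1$ convergence gives $(\d_x\sqrt{\rho_{\tau_n}})^2\to(\d_x\sqrt{\bar\rho})^2$ in $L^1_{loc,t,x}$; this last point is exactly the step that the energy bound alone cannot deliver.

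\emph{Passage to the limit.} Using \eqref{eq:bohm}, the momentum equation in \eqref{eq:QHD_rs} is the distributional identity
\[
\rho_\tau v_\tau=\tfrac14\d_x^3\rho_\tau-\d_x[(\d_x\sqrt{\rho_\tau})^2]-\d_xp(\rho_\tau)-\rho_\tau\d_xV_\tau-\tau^2\d_t(\rho_\tau v_\tau)-\tau^2\d_x(\rho_\tau v_\tau^2).
\]
Substituting it into the weak form of $\d_t\rho_\tau+\d_x(\rho_\tau v_\tau)=0$ tested against $\eta\in\mathcal C^\infty_0([0,T)\times\T)$ produces, after integration by parts, the identity \eqref{eq:qdde_cty} for $\rho_\tau$ up to a remainder $\mathcal R_\tau$ collecting the three $\tau^2$-terms. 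Each piece of $\mathcal R_\tau$ is $O(\tau)$: $\|\rho_\tau v_\tau\|_{L^2_{t,x}}\le\|\sqrt{\rho_\tau}\|_{L^\infty}\|\sqrt{\rho_\tau}v_\tau\|_{L^2_{t,x}}\le C$ controls the $\d_t(\rho_\tau v_\tau)$ contribution after moving one derivative onto $\eta$; $\int_0^T\!\int_\T\tau^2\rho_\tau v_\tau^2\le\tau^2E_0$ controls the convective one; and $\tau^2\|\rho_0 v_{\tau,0}\|_{L^1_x}\le\tau\sqrt{2M_0E_0}$ (using $E_\tau(0)\le E_0$) controls the initial one. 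Letting $\tau=\tau_n\to0$ and invoking the convergences above, together with $\int_\T\rho_{\tau_n}(0)\eta(0)=\int_\T\rho_0\eta(0)$, all remaining terms converge to those of \eqref{eq:qdde_cty}; since $\sqrt{\bar\rho}\in L^\infty_tH^1_x$, $\bar\rho$ is a weak solution of \eqref{eq:qdde} in the sense of Definition~\ref{def:qdde_ws}. For the dissipative statement, assume $f$ convex, so $p'(s)=sf''(s)\ge0$, and integrate \eqref{eq:ineq_entr_rs} over $[0,t]$ \emph{before} the partial absorption of $\int\rho_\tau(\d_x^2\log\sqrt{\rho_\tau})^2$: the boundary term $\tau^2\int_\T\log\rho_\tau\d_t\rho_\tau$ is $O(\tau)$ since $2\tau^2|\int_\T\sqrt{\rho_\tau}v_\tau\d_x\sqrt{\rho_\tau}|\le\tau(\tau^2\!\int_\T\rho_\tau v_\tau^2+\int_\T(\d_x\sqrt{\rho_\tau})^2)\le2\tau E_0$, the cross term $\tau^2\int_\T\rho_\tau v_\tau^2\d_x^2\log\rho_\tau$ is $O(\tau)$ by Cauchy--Schwarz with \eqref{eq:bd_I_rxl} and the entropy bound, and the right-hand side $4\tau^2\!\int\!\int\rho_\tau\sigma_\tau^2+\tau^4\!\int\!\int\rho_\tau v_\tau^4$ is $O(\tau^2)$ by \eqref{eq:bd_I_rxl}. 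Passing to the limit, weak lower semicontinuity (via \eqref{eq:H^2bd}: $\d_x^2\sqrt{\rho_{\tau_n}}\rightharpoonup\d_x^2\sqrt{\bar\rho}$ in $L^2$ and $\d_x\rho_{\tau_n}^{1/4}\to\d_x\bar\rho^{1/4}$ in $L^2$, the latter since $\rho_{\tau_n}^{-1/4}$ is uniformly bounded and converges a.e., plus Fatou in $L^4$), continuity of $H$ under the strong $L^1$ convergence of $\rho_{\tau_n}$, and Fatou together with $p'\ge0$ for the (nonnegative, hence retainable) pressure and electrostatic contributions yield \eqref{eq:disp_entrp_1}.

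\emph{Main obstacle.} The entire difficulty is concentrated in the uniform $L^2_{loc}H^2$ bound and the ensuing strong $H^1$ compactness of $\sqrt{\rho_\tau}$: without it the Fisher-information term $(\d_x\sqrt{\rho_\tau})^2$ appearing in the quantum stress tensor \eqref{eq:bohm} may oscillate or concentrate, and the limit of the stress tensor cannot be identified. That bound is available only because the density stays uniformly away from vacuum, which is precisely what makes the entropy production in Proposition~\ref{prop:entr} coercive (through the identity \eqref{eq:H^2bd}); once this is secured, the passage to the limit and the lower-semicontinuity arguments are routine.
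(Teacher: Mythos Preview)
Your proof is correct and follows essentially the same route as the paper: entropy estimate $\Rightarrow$ uniform $L^2_{loc}H^2_x$ bound on $\sqrt{\rho_\tau}$ $\Rightarrow$ compactness $\Rightarrow$ pass to the limit in the weak formulation, then invoke weak lower semicontinuity for the dissipative inequality. The only notable difference is organisational: the paper writes the continuity and momentum equations separately in weak form, passes to the limit in each (identifying a weak limit $\bar J$ of $J_{\tau_n}$), and then combines them, whereas you substitute the momentum equation into the continuity equation \emph{before} taking the limit, eliminating $J_\tau$ entirely; both reach \eqref{eq:qdde_cty}. Your compactness step is in fact more carefully justified---you invoke Aubin--Lions--Simon explicitly using the $\d_t\sqrt{\rho_\tau}\in L^2_{t,x}$ bound from \eqref{eq:bd_I_rxl}, while the paper simply writes ``by the embedding theory'' for the strong $L^2_tH^1_x$ convergence, which tacitly relies on the same time-derivative information.
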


\begin{proof}
The conditions provided above imply $(\rho_\tau,v_\tau)$ satisfy the uniform bounds
\begin{equation}\label{eq:prop29.1}
\|\sqrt\rho_\tau\|_{L^2_tH^1_x}\leq C(M_0,E_0)T,\quad \|\sqrt\rho_\tau v_\tau\|_{L^2_{t,x}}\leq E_0^\frac12
\end{equation}
on $[0,T]\times \T$ for any $0\leq T<\infty$.
Furthermore, by \eqref{eq:en_bd1} in Proposition \ref{prop:entr} and the identity \eqref{eq:H^2bd}, we have
\begin{equation}\label{eq:7-1}
\begin{aligned}
H(\rho)(t)&+\int_0^t\int_\T(\d_x^2\sqrt\rho_\tau)^2+(\d_x\rho_\tau^\frac14)^4dxds\\
&+\int_0^t\int_\T p'(\rho_\tau)(\d_x\sqrt\rho_\tau)^2dxds\leq H(\rho_0)+C(M_0,E_0,I_0,\delta,t)\tau.
\end{aligned}
\end{equation}
For general internal energy $f\in C^2([0,\infty))$, by
\[
p(s)=s f'(s)-f(s),
\]
it follows that
\begin{align*}
\left|\int_0^t\int_\T p'(\rho_\tau)(\d_x\sqrt\rho_\tau)^2dxds\right|=&\left|\int_0^t\int_\T \rho_\tau f''(\rho_\tau)(\d_x\sqrt\rho_\tau)^2dxds\right|\\
\leq & C(M_0,E_0,\|f\|_{C^2},T).
\end{align*}
For the initial entropy $H(\rho_0)$, we have
\[
H(\rho_0)=\int_\T \rho_0\log\left(\frac{\rho_0}{M_0}\right)dx\leq C(M_0,E_0,\delta).
\]
Thus on $[0,T]\times \T$ we obtain that
\[
\|\d_x^2\sqrt\rho_\tau\|_{L^2_{t,x}}\leq C(M_0,E_0,\delta)(1+T)+C(M_0,E_0,I_0,\delta,T)\tau.
\]
Therefore we can choose a converging subsequence $\{(\rho_{\tau_n},v_{\tau_n})\}$ and a limiting function $\xi\geq 0$, such that
\[
\sqrt\rho_{\tau_n}\rightharpoonup\xi\quad\textrm{in }L^2_{t,loc}H^2_x.
\]
By the embedding theory, we also have the strong convergence
\[
\sqrt\rho_{\tau_n}\to\xi\quad\textrm{in }L^2_{t,loc}H^1_x,
\]
which implies $\rho_{\tau_n}\to \xi^2$,  thus we can write $\xi=\sqrt{\bar\rho}$. Now we take test functions $\eta\in\mathcal C^\infty_0([0, T)\times\T)$, and write the rescaled QHD system \eqref{eq:QHD_rs} in the weak form,
\[
\int_0^T\int_{\T}\rho_{\tau_n}\d_t\eta+J_{\tau_n}\d_x\eta\,dxdt+\int_{\T}\rho_0(x)\eta(0, x)\,dx=0;
\]
and
\begin{align*}
\int_0^T\int_{\T}\tau_n^2J_{\tau_n}\d_t\eta&+(\tau_n^2\Lambda_{\tau_n}^2+p(\rho_{\tau_n})+(\d_x\sqrt\rho_{\tau_n})^2)\d_x\eta-\eta\rho_\tau\d_xV_{\tau_n}
\\
&-\frac{1}{4}\rho_{\tau_n}\d_x^3\eta-J_{\tau_n}\eta\,dxdt+\int_{\T}J_0(x)\eta(0, x)\,dx=0,
\end{align*}
where $J_{\tau_n}=\sqrt\rho_{\tau_n}\Lambda_{\tau_n}=\rho_{\tau_n} v_{\tau_n}$. 
The bound \eqref{eq:prop29.1} on $\sqrt\rho_{\tau_n}v_{\tau_n}$ and the strong convergence of $\sqrt\rho_{\tau_n}$ imply the convergence 
\[
J_{\tau_n}=\rho_{\tau_n} v_{\tau_n}\rightharpoonup \bar J\quad \textrm{in }L^2_{t,loc}L^2_x,\quad V_{\tau_n}\to\bar V\quad \textrm{in }L^2_{t,loc}H^1_x.
\]
Thus we can pass to the limit ${\tau_n}\to 0$ to obtain
\[
\int_0^t\int_\T\bar\rho\d_t\eta+\bar J\d_x\eta dxds+\int_\T \rho_0\eta(0)dx=0
\]
and
\[
\int_0^t\int_\T [(\d_x\sqrt{\bar\rho})^2+p(\bar\rho)]\d_x^2\eta-\bar\rho\d_x\bar V\d_x\eta-\frac14\bar\rho\d_x^4\eta-\bar J\d_x\eta dxds=0,
\]
which gives the weak formulation of equation \eqref{eq:qdde},
\[
\int_0^t\int_\T\bar\rho\d_t\eta+[(\d_x\sqrt{\bar\rho})^2+p(\bar\rho)]\d_x^2\eta-\bar\rho\d_x\bar V\d_x\eta-\frac14\bar\rho\d_x^4\eta dxds+\int_\T \rho_0\eta(0)dx=0,
\]
as in Definition \ref{def:qdde_ws}.

Last, if we assume the internal energy $f$ to be a convex function, then we have the positivity of $p'(s)=s f''(s)$. By pass to the limit $\tau\to 0$ in \eqref{eq:7-1}, we show that $\bar\rho$ is a dispersive solution as in Definition \ref{def:qdde_ws}.
\end{proof}

Now we turn to the proof of the explicit convergence rate. As a consequence of the relaxation-time limit, the limiting density $\bar\rho$ obtained above has a lower bound $\delta>0$, and it satisfies the following bounds,
\begin{equation}\label{eq:bd_barrho_1}
\|\sqrt{\bar\rho}\|_{L^\infty_tH^1_x}\leq C(M_0,E_0),
\end{equation}
and
\begin{equation}\label{eq:bd_barrho_2}
\|\d_x^2\sqrt{\bar\rho}\|_{L^2_{t,x}}+\|\d_x\bar\rho^\frac14\|_{L^4_{t,x}}+\|\sqrt{\bar\rho}\d_x^2\log\sqrt{\bar\rho}\|_{L^2_{t,x}}\leq C(M_0,E_0,I_0,\delta,T),
\end{equation}
where the bound of $\sqrt{\bar\rho}\d_x^2\log\sqrt{\bar\rho}$ can be obtain by an analogue identity of \eqref{eq:H^2bd} for $\bar\rho$. 
Moreover, by the uniform bound of $\sqrt\rho_\tau\sigma_\tau=\d_t\sqrt\rho_\tau$, it follows that
\begin{equation}\label{eq:bd_barrho_3}
\|\d_t\sqrt{\bar\rho}\|_{L^2_{t,x}}\leq C(M_0,E_0,I_0,\delta,T).
\end{equation}
Then by \eqref{eq:bohm}, we can write the quantum drift-diffusion equation as
\[
\d_t\bar\rho+\d_x^2(\bar\rho\d_x^2\log\sqrt{\bar\rho})-\d_x^2 p(\bar\rho)-\d_x(\bar\rho\d_x\bar V)=0.
\]
by the assumption $p\in C^2((0,+\infty))$, we obtain
\begin{equation}\label{eq:bd_barrho_4}
\|\d_x^2(\bar\rho\d_x^2\log\sqrt{\bar\rho})\|_{L^2_{t,x}}\leq C(M_0,E_0,I_0,\delta,T).
\end{equation}

As the main technical tool of this part, we use the method of relative entropy. Recall that the physical entropy $H(\rho)(t)$ is given by
\[
H(\rho)(t)=\int_\T g(\rho(t))dx,
\]
where $g(s)=s \log(s/M_0)$, then for $\rho_\tau$ and the limiting density $\bar\rho$, we define the relative entropy as
\begin{equation}\label{eq:rl_entr}
\begin{aligned}
H(\rho_\tau|\bar\rho)(t)=&H(\rho_\tau)(t)-H(\bar\rho)(t)-\int_\T g'(\bar\rho)(\rho_\tau-\bar\rho)dx\\
=&\int_\T g(\rho_\tau)-g(\bar\rho)-g'(\bar\rho)(\rho_\tau-\bar\rho)dx
\end{aligned}
\end{equation} 
with
\[
g'(s)=\log\left(\frac{s}{M_0}\right)+1.
\]
By Taylor's formula, we have
\[
g(\rho_\tau)-g(\bar\rho)-g'(\bar\rho)(\rho_\tau-\bar\rho)=\frac12 g''(\rho_*)(\rho_\tau-\bar\rho)^2
\]
for some $\rho_*\in [\min\{\rho_\tau,\bar\rho\},\max\{\rho_\tau,\bar\rho\}]$, and $g''(s)=s^{-1}$. Moreover, densities $\rho_\tau$ and $\bar\rho$ have a lower bound $\delta>0$, and have an upper bound $C(M_0,E_0)$ by the mass-energy bound \eqref{eq:bd_me_rxl} and \eqref{eq:bd_barrho_1}. Thus the relative entropy $H(\rho_\tau|\bar\rho)$ is equivalent to the $L^2_x$ norm of $\rho_\tau-\bar\rho$ in the sense
\begin{equation}\label{eq:equiv_entr}
2 \delta H(\rho_\tau|\bar\rho)(t)\leq\|\rho_\tau-\bar\rho\|_{L^2_x}^2(t)\leq 2 C(M_0,E_0)H(\rho_\tau|\bar\rho)(t).
\end{equation}

To estimate the relative entropy $H(\rho_\tau|\bar\rho)$, we need the following result which gives the time derivative of $H(\rho_\tau|\bar\rho)$. Here again a rigorous computation need the argument of mollification of $\rho_\tau$ as in the proof of Proposition \ref{prop:entr}. Since the method is similar, we omit this step for simplicity of the paper.

\begin{prop}\label{prop:6.2}
The time derivative of $H(\rho_\tau|\bar\rho)$ is given by
\begin{equation}\label{eq:dtrlentr}
\begin{aligned}
\frac{d}{dt}H(\rho_\tau|\bar\rho)=&-\int_\T\rho_\tau(\d_x^2\log\sqrt\rho_\tau-\d_x^2\log\sqrt{\bar\rho})^2dx\\
&+\int_\T \rho_\tau\d_x^2\log\sqrt{\bar\rho}(\d_x\log\sqrt\rho_\tau-\d_x\log\sqrt{\bar\rho})^2dx\\
&-\int_\T \rho_\tau[p'(\rho_\tau)\d_x\log\rho_\tau-p'(\bar\rho)\d_x\log\bar\rho](\d_x\log\rho_\tau-\d_x\log\bar\rho) dx\\
&-\int_\T(\rho_\tau-\bar\rho)^2dx+\int_\T(\rho_\tau-\bar\rho)\d_x\log\bar\rho\d_x(V_\tau-\bar V)dx\\
&+\tau^2[R(\rho_\tau)-R(\rho_\tau,\bar\rho)],
\end{aligned}
\end{equation}
where the remaining terms are given by
\[
R(\rho_\tau)=-\frac{d}{dt}\int_\T\log\rho_\tau\d_t\rho_\tau dx+4\int_\T\rho_\tau\sigma_\tau^2 dx+\int_\T\rho_\tau v_\tau^2\d_x^2\log\rho_\tau dx,
\]
and
\[
R(\rho_\tau,\bar\rho)=-\frac{d}{dt}\int_\T\log\bar\rho\d_t\rho_\tau dx+\int_\T\d_t\log\bar\rho\d
_t\rho_\tau dx+\int_\T\rho_\tau v_\tau^2\d_x^2\log\bar\rho dx.
\]
\end{prop}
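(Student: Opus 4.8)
The plan is to differentiate \eqref{eq:rl_entr} directly and then substitute both evolution equations, organizing the outcome into the claimed ``relative'' form. Since $g(s)=s\log(s/M_0)$ has $g'(s)=\log(s/M_0)+1$ and $g''(s)=1/s$, differentiating in time and using mass conservation $\tfrac{d}{dt}\!\int_\T\rho_\tau\,dx=\tfrac{d}{dt}M_0=0$ gives
\[
\frac{d}{dt}H(\rho_\tau|\bar\rho)=\int_\T\log\tfrac{\rho_\tau}{\bar\rho}\,\d_t\rho_\tau\,dx-\int_\T\tfrac{\d_t\bar\rho}{\bar\rho}\,(\rho_\tau-\bar\rho)\,dx ,
\]
which, splitting $\log\tfrac{\rho_\tau}{\bar\rho}=\log\tfrac{\rho_\tau}{M_0}-\log\tfrac{\bar\rho}{M_0}$ and again using $\int_\T\d_t\rho_\tau\,dx=0$, is the same as $\tfrac{d}{dt}H(\rho_\tau|\bar\rho)=\tfrac{d}{dt}H(\rho_\tau)-\int_\T g'(\bar\rho)\d_t\rho_\tau\,dx-\int_\T g''(\bar\rho)\d_t\bar\rho\,(\rho_\tau-\bar\rho)\,dx$. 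Proposition \ref{prop:entr} already supplies $\tfrac{d}{dt}H(\rho_\tau)$, so only the two cross terms remain.

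For these, substitute the conservative form of the two equations. Differentiating the momentum equation in \eqref{eq:QHD_rs} once in $x$, using \eqref{eq:bohm} for the quantum term and the continuity equation twice to trade $\d_x\d_t(\rho_\tau v_\tau)$ for $-\d_t^2\rho_\tau$ and $\d_x(\rho_\tau v_\tau)$ for $-\d_t\rho_\tau$, yields
\[
\d_t\rho_\tau=-\tfrac14\d_x^2\big(\rho_\tau\d_x^2\log\rho_\tau\big)+\d_x^2 p(\rho_\tau)+\d_x(\rho_\tau\d_x V_\tau)+\tau^2\big(\d_x^2(\rho_\tau v_\tau^2)-\d_t^2\rho_\tau\big),
\]
and $\bar\rho$ satisfies the same identity with $\tau=0$ (the conservative form of \eqref{eq:qdde}; cf.\ the discussion around \eqref{eq:bd_barrho_4}). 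Plug these in and integrate by parts, using $\d_x g'(\bar\rho)=\d_x\log\bar\rho$, $\d_x^2 g'(\bar\rho)=\d_x^2\log\bar\rho$, the identity $\d_x\!\big(\tfrac{\rho_\tau-\bar\rho}{\bar\rho}\big)=\tfrac{\rho_\tau}{\bar\rho}(\d_x\log\rho_\tau-\d_x\log\bar\rho)$, and the two Poisson equations $-\d_x^2 V_\tau=\rho_\tau-\mathcal C$, $-\d_x^2\bar V=\bar\rho-\mathcal C$ (so $\mathcal C$ cancels). The outcome regroups by type: the three pressure contributions (one from Proposition \ref{prop:entr}, after writing $(\d_x\sqrt{\rho})^2=\tfrac14\rho(\d_x\log\rho)^2$, and one from each cross term) collapse to $-\int_\T\rho_\tau\big[p'(\rho_\tau)\d_x\log\rho_\tau-p'(\bar\rho)\d_x\log\bar\rho\big](\d_x\log\rho_\tau-\d_x\log\bar\rho)\,dx$; the three electrostatic contributions, using $\rho_\tau-\bar\rho=-\d_x^2(V_\tau-\bar V)$, collapse to $-\int_\T(\rho_\tau-\bar\rho)^2\,dx+\int_\T(\rho_\tau-\bar\rho)\d_x\log\bar\rho\,\d_x(V_\tau-\bar V)\,dx$; the three quantum contributions collapse to the relative Fisher-information terms in \eqref{eq:dtrlentr}; and the $\tau^2$ pieces, namely $\tau^2R(\rho_\tau)$ from Proposition \ref{prop:entr} together with the $\tau^2\d_t^2\rho_\tau$ and $\tau^2\d_x^2(\rho_\tau v_\tau^2)$ pieces of the $-\int g'(\bar\rho)\d_t\rho_\tau$ term (one integration by parts in time on the former generating the total time derivative in $R(\rho_\tau,\bar\rho)$), assemble into $\tau^2[R(\rho_\tau)-R(\rho_\tau,\bar\rho)]$.

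Regarding rigor, as in Proposition \ref{prop:entr} the regularity of GCP solutions --- and of $\bar\rho$, for which one has only \eqref{eq:bd_barrho_1}--\eqref{eq:bd_barrho_4} and $\inf_{t,x}\bar\rho\ge\delta$ --- is too weak for these manipulations to be legitimate pointwise, so the computation is first carried out on the mollifications $\rho_{\tau,\eps}=\rho_\tau\ast\chi_\eps$ (and on the mollified wave function where two space derivatives multiply). Every resulting integrand is the mollification of an $L^1_{t,x}$ function whose norm is controlled, uniformly in $\eps$, exactly by \eqref{eq:bd_me_rxl}, \eqref{eq:bd_I_rxl}, \eqref{eq:en_bd1} and \eqref{eq:bd_barrho_1}--\eqref{eq:bd_barrho_4}; one then lets $\eps\to0$. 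This step follows the pattern already detailed in Proposition \ref{prop:entr} and can be stated briefly.

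The step I expect to be the real obstacle is the quantum regrouping: checking that $-\int_\T\rho_\tau(\d_x^2\log\sqrt\rho_\tau)^2\,dx+\tfrac14\int_\T\d_x^2\log\bar\rho\,\rho_\tau\d_x^2\log\rho_\tau\,dx+\tfrac14\int_\T\tfrac{\rho_\tau-\bar\rho}{\bar\rho}\,\d_x^2(\bar\rho\d_x^2\log\bar\rho)\,dx$ equals $-\int_\T\rho_\tau(\d_x^2\log\sqrt\rho_\tau-\d_x^2\log\sqrt{\bar\rho})^2\,dx+\int_\T\rho_\tau\d_x^2\log\sqrt{\bar\rho}\,(\d_x\log\sqrt\rho_\tau-\d_x\log\sqrt{\bar\rho})^2\,dx$. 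This is a relative version of the Bernis-type identity \eqref{eq:H^2bd}: write $\tfrac{\rho_\tau-\bar\rho}{\bar\rho}=e^{2(\log\sqrt\rho_\tau-\log\sqrt{\bar\rho})}-1$, integrate by parts twice to move the derivatives off $\bar\rho\d_x^2\log\sqrt{\bar\rho}$, and cancel the cubic-in-logarithmic-derivative remainder against the cross term --- on $\T$ all total-derivative boundary terms vanish, which is what makes the identity close. Tracking signs and deciding which of $\d_x\log\sqrt{(\cdot)}$, $\d_x\log(\cdot)$ each factor carries (they differ only by constant factors, as $\log\sqrt s=\tfrac12\log s$) is the delicate bookkeeping.
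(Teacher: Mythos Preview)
Your proposal is correct and follows essentially the same route as the paper: split $\frac{d}{dt}H(\rho_\tau|\bar\rho)$ into $\frac{d}{dt}H(\rho_\tau)$, $-\int_\T\log\bar\rho\,\d_t\rho_\tau\,dx$, and $-\int_\T\rho_\tau\,\d_t\log\bar\rho\,dx$, substitute the two evolution equations, integrate by parts, and regroup. The quantum regrouping you flag as the obstacle is in fact immediate once you use the algebraic identity
\[
\d_x^2\!\left(\frac{\rho_\tau}{\bar\rho}\right)=\frac{\rho_\tau}{\bar\rho}\Big[(\d_x^2\log\rho_\tau-\d_x^2\log\bar\rho)+(\d_x\log\rho_\tau-\d_x\log\bar\rho)^2\Big];
\]
after two integrations by parts turn the third quantum contribution into $\tfrac12\int_\T\bar\rho\,\d_x^2\log\sqrt{\bar\rho}\,\d_x^2(\rho_\tau/\bar\rho)\,dx$, this identity plus completing the square with the other two quantum terms gives \eqref{eq:dtrlentr} directly---no exponential rewriting or Bernis-type cancellation is needed.
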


\begin{proof}
The time derivative of $H(\rho_\tau|\bar\rho)$ is computed by
\[
\frac{d}{dt}H(\rho_\tau|\bar\rho)=\frac{d}{dt}H(\rho_\tau)-\int_\T \d_t\rho_\tau\log\bar\rho dx-\int_\T \rho_\tau\d_t\log\bar\rho dx,
\]
where the terms containing $\int_\T\d_t\rho_\tau$ and $\int_\T\d_t\bar\rho$ are all erased due to the equations. The time derivative $\frac{d}{dt}H(\rho_\tau)$ has been given in Proposition \ref{prop:entr} as
\begin{align*}
\frac{d}{dt}H(\rho_\tau)=&-\int_\T\rho_\tau(\d_x^2\log\sqrt\rho_\tau)^2 dx-\int_\T p'(\rho_\tau)\rho_\tau(\d_x\log\rho_\tau)^2dx\\
&-\int_\T\rho_\tau(\rho_\tau-\mathcal{C}(x))dx+\tau^2 R(\rho_\tau).
\end{align*}
For the remaining terms, by using \eqref{eq:QHD_rs} and \eqref{eq:qdde}, we have
\begin{align*}
-\int_\T \d_t\rho_\tau\log\bar\rho dx=&\frac12\int_\T\log\bar\rho\d_x^2(\rho_\tau\d_x^2\log\sqrt\rho_\tau)dx-\int_\T\log\bar\rho\d_x^2p(\rho_\tau)dx\\
&-\int_\T\log\bar\rho\d_x(\rho_\tau\d_xV_\tau) dx+\tau^2\int_\T\log\bar\rho\d_t^2\rho_\tau dx\\
&-\tau^2\int_\T\log\bar\rho\d_x^2(\rho_\tau v_\tau^2)dx\\
=&\int_\T\rho_\tau\d_x^2\log\sqrt{\bar\rho}\d_x^2\log\sqrt\rho_\tau+\int_\T p'(\rho_\tau)\rho_\tau\d_x\log\rho_\tau\d_x\log\bar\rho dx\\
&+\int_\T\rho_\tau\d_x\log\bar\rho\d_xV_\tau dx-\tau^2 R(\rho_\tau,\bar\rho),
\end{align*}
where 
\begin{align*}
R(\rho_\tau,\bar\rho)=&\int_\T\rho_\tau v_\tau^2\d_x^2\log\bar\rho dx-\int_\T\log\bar\rho\d_t^2\rho_\tau dx\\
=&\int_\T\rho_\tau v_\tau^2\d_x^2\log\bar\rho dx-\frac{d}{dt}\int_\T\log\bar\rho\d_t\rho_\tau dx+\int_\T\d_t\log\bar\rho\d_t\rho_\tau dx,
\end{align*}
and
\begin{align*}
-\int_\T \rho_\tau\d_t\log\bar\rho dx=&\frac12\int_\T \frac{\rho_\tau}{\bar\rho}\d_x^2(\bar\rho\d_x^2\log\sqrt{\bar\rho})dx-\int_\T \frac{\rho_\tau}{\bar\rho}\d_x^2p(\bar\rho)dx\\
&-\int_\T \frac{\rho_\tau}{\bar\rho}\d_x(\bar\rho\d_x\bar V)dx\\
=&\frac12\int_\T \bar\rho\d_x^2\log\sqrt{\bar\rho}\d_x^2\left(\frac{\rho_\tau}{\bar\rho}\right)dx+\int_\T \d_xp(\bar\rho)\d_x\left(\frac{\rho_\tau}{\bar\rho}\right)dx\\
&+\int_\T \rho_\tau(\bar\rho-\mathcal{C}(x))dx-\int_\T \rho_\tau\d_x\log\bar\rho\d_x\bar Vdx.
\end{align*}
By substituting 
\[
\d_x\left(\frac{\rho_\tau}{\bar\rho}\right)=\frac{\rho_\tau}{\bar\rho}(\d_x\log\rho_\tau-\d_x\log\bar\rho)
\]
and
\[
\d_x^2\left(\frac{\rho_\tau}{\bar\rho}\right)=\frac{\rho_\tau}{\bar\rho}(\d_x^2\log\rho_\tau-\d_x^2\log\bar\rho)+\frac{\rho_\tau}{\bar\rho}(\d_x\log\rho_\tau-\d_x\log\bar\rho)^2
\]
into the previous identity, we obtain
\begin{align*}
-\int_\T \rho_\tau\d_t\log\bar\rho dx=&\int_\T \rho_\tau\d_x^2\log\sqrt{\bar\rho}(\d_x^2\log\sqrt\rho_\tau-\d_x^2\log\sqrt{\bar\rho})dx\\
&+\int_\T \rho_\tau\d_x^2\log\sqrt{\bar\rho}(\d_x\log\sqrt\rho_\tau-\d_x\log\sqrt{\bar\rho})^2dx\\
&+\int_\T p'(\bar\rho)\rho_\tau\d_x\log\bar\rho(\d_x\log\rho_\tau-\d_x\log\bar\rho)dx.
\end{align*}
To obtain \eqref{eq:dtrlentr}, we summarize the computations above, and notice that the integrals containing $V_\tau$, $\bar V$ and $\mathcal{C}(x)$ are
\begin{align*}
&-\int_\T\rho_\tau(\rho_\tau-\bar\rho)dx+\int_\T\rho_\tau\d_x\log\bar\rho\d_x(V_\tau-\bar V)dx\\
=&-\int_\T\rho_\tau(\rho_\tau-\bar\rho)dx+\int_\T(\rho_\tau-\bar\rho)\d_x\log\bar\rho\d_x(V_\tau-\bar V)dx+\int_\T\d_x\bar\rho\d_x(V_\tau-\bar V)dx\\
=&-\int_\T\rho_\tau(\rho_\tau-\bar\rho)dx+\int_\T(\rho_\tau-\bar\rho)\d_x\log\bar\rho\d_x(V_\tau-\bar V)dx-\int_\T\bar\rho\d_x^2(V_\tau-\bar V)dx\\
=&-\int_\T(\rho_\tau-\bar\rho)^2dx+\int_\T(\rho_\tau-\bar\rho)\d_x\log\bar\rho\d_x(V_\tau-\bar V)dx.
\end{align*}
\end{proof}

\begin{proof}[Proof of Theorem \ref{thm:rlxlimit}]
The proof is finished by controlling the difference terms in the right hand side of \eqref{eq:dtrlentr} by $H(\rho_\tau|\bar\rho)$ and the dissipation. 

We first write
\begin{align*}
\int_\T \rho_\tau\d_x^2\log\sqrt{\bar\rho}&(\d_x\log\sqrt\rho_\tau-\d_x\log\sqrt{\bar\rho})^2dx\\
\leq & \delta^{-1}\|\rho_\tau\|_{L^\infty_x}\|\bar\rho\d_x^2\log\sqrt{\bar\rho}\|_{L^\infty_x}\|\d_x\log\sqrt\rho_\tau-\d_x\log\sqrt{\bar\rho}\|^2_{L^2_x}.
\end{align*}
It follows interpolation inequality that
\begin{equation}\label{eq:bd_log1}
\|\d_x\log\sqrt\rho_\tau-\d_x\log\sqrt{\bar\rho}\|^2_{L^2_x}\leq \|\log\sqrt\rho_\tau-\log\sqrt{\bar\rho}\|_{L^2_x}\|\d_x^2\log\sqrt\rho_\tau-\d_x^2\log\sqrt{\bar\rho}\|_{L^2_x}.
\end{equation}
Moreover, by using
\begin{equation}\label{eq:bd_log2}
|\log\sqrt\rho_\tau-\log\sqrt{\bar\rho}|\leq \frac{|\rho_\tau-\bar\rho|}{2\min\{\rho_\tau,\bar\rho\}}\leq \frac{|\rho_\tau-\bar\rho|}{2\delta},
\end{equation}
we obtain that
\begin{align*}
\int_\T \rho_\tau\d_x^2\log\sqrt{\bar\rho}&(\d_x\log\sqrt\rho_\tau-\d_x\log\sqrt{\bar\rho})^2dx\\
\leq & C(M_0,E_0,\delta)\|\bar\rho\d_x^2\log\sqrt{\bar\rho}\|_{L^\infty_x}\\
&\cdot \|\rho_\tau-\bar\rho\|_{L^2_x}\|\sqrt\rho_\tau(\d_x^2\log\sqrt\rho_\tau-\d_x^2\log\sqrt{\bar\rho})\|_{L^2_x}\\
\leq & C(M_0,E_0,\delta)\|\bar\rho\d_x^2\log\sqrt{\bar\rho}\|_{L^\infty_x}^2 \|\rho_\tau-\bar\rho\|_{L^2_x}^2\\
&+\frac14\|\sqrt\rho_\tau(\d_x^2\log\sqrt\rho_\tau-\d_x^2\log\sqrt{\bar\rho})\|_{L^2_x}^2.
\end{align*}
By Gagliardo-Nirenberg  inequality, we further write
\begin{align*}
\|\bar\rho\d_x^2\log\sqrt{\bar\rho}\|_{L^\infty_x}^2\leq & C\|\bar\rho\d_x^2\log\sqrt{\bar\rho}\|_{L^2_x}^\frac32\|\d_x^2(\bar\rho\d_x^2\log\sqrt{\bar\rho})\|_{L^2_x}^\frac12+C\|\bar\rho\d_x^2\log\sqrt{\bar\rho}\|_{L^2_x}^2\\
\leq & C(M_0,E_0)(\|\sqrt{\bar\rho}\d_x^2\log\sqrt{\bar\rho}\|_{L^2_x}^2+\|\d_x^2(\bar\rho\d_x^2\log\sqrt{\bar\rho})\|_{L^2_x}^2).
\end{align*}
Thus by the equivalence \eqref{eq:equiv_entr} of $H(\rho_\tau|\bar\rho)$ and $\|\rho_\tau-\bar\rho\|_{L^2_x}^2$, we obtain
\begin{align*}
\int_\T \rho_\tau\d_x^2\log\sqrt{\bar\rho}&(\d_x\log\sqrt\rho_\tau-\d_x\log\sqrt{\bar\rho})^2dx\\
\leq & a_1(t)H(\rho_\tau|\bar\rho)+\frac14\int_\T\rho_\tau(\d_x^2\log\sqrt\rho_\tau-\d_x^2\log\sqrt{\bar\rho})^2 dx,
\end{align*}
where
\[
a_1(t)=C(M_0,E_0,\delta)(\|\sqrt{\bar\rho}\d_x^2\log\sqrt{\bar\rho}\|_{L^2_x}^2(t)+\|\d_x^2(\bar\rho\d_x^2\log\sqrt{\bar\rho})\|_{L^2_x}^2(t)).
\]

To estimate the pressure term in the right hand side of \eqref{eq:dtrlentr}, we write
\begin{align*}
p'(\rho_\tau)\d_x\log\rho_\tau-p'(\bar\rho)\d_x\log\bar\rho=&[p'(\rho_\tau)-p'(\bar\rho)]\d_x\log\rho_\tau\\
&+p'(\bar\rho)(\d_x\log\rho_\tau-\d_x\log\bar\rho),
\end{align*}
which gives
\begin{align*}
-\int_\T \rho_\tau&[p'(\rho_\tau)\d_x\log\rho_\tau-p'(\bar\rho)\d_x\log\bar\rho](\d_x\log\rho_\tau-\d_x\log\bar\rho) dx\\
\leq & \int_\T |\d_x\rho_\tau[p'(\rho_\tau)-p'(\bar\rho)](\d_x\log\rho_\tau-\d_x\log\bar\rho) |dx\\
&+\int_\T\rho_\tau|p'(\bar\rho)|(\d_x\log\rho_\tau-\d_x\log\bar\rho)^2dx=\textrm{I}+\textrm{II}.
\end{align*}
By $p\in C^2[0,+\infty)$ and writing 
\[
p'(\rho_\tau)-p'(\bar\rho)=p''(\rho_*)(\rho_\tau-\bar\rho),
\]
we estimate the integral I as
\begin{align*}
\textrm{I}\leq & 2\|\sqrt\rho_\tau\|_{L^\infty_x}\|\d_x\sqrt\rho_\tau\|_{L^\infty_x}\|p''(\rho_*)\|_{L^\infty_x}\|\rho_\tau-\bar\rho\|_{L^2_x}\|\d_x\log\rho_\tau-\d_x\log\bar\rho\|_{L^2_x}\\
\leq & C(M_0,E_0,\delta)\|\d_x\sqrt\rho_\tau\|_{L^\infty_x}\|\rho_\tau-\bar\rho\|_{L^2_x}^\frac32\|\sqrt\rho_\tau(\d_x\log\sqrt\rho_\tau-\d_x\log\sqrt{\bar\rho})\|_{L^2_x}^\frac12\\
\leq & C(M_0,E_0,\delta)\|\d_x\sqrt\rho_\tau\|_{L^\infty_x}^\frac43\|\rho_\tau-\bar\rho\|_{L^2_x}^2+\frac14\|\sqrt\rho_\tau(\d_x\log\sqrt\rho_\tau-\d_x\log\sqrt{\bar\rho})\|_{L^2_x}^2,
\end{align*}
where in the second inequality, we again use \eqref{eq:bd_log1} and \eqref{eq:bd_log2}, and by using
\[
\|\d_x\sqrt\rho_\tau\|_{L^\infty_x}\leq C\|\d_x\sqrt\rho_\tau\|_{L^2_x}^\frac12\|\d_x^2\sqrt\rho_\tau\|_{L^2_x}^\frac12\leq C(E_0)\|\d_x^2\sqrt\rho_\tau\|_{L^2_x}^\frac12,
\]
we obtain
\[
I\leq a_2(t)H(\rho_\tau|\bar\rho)+\frac14\int_\T\rho_\tau(\d_x^2\log\sqrt\rho_\tau-\d_x^2\log\sqrt{\bar\rho})^2 dx
\]
with 
\[
a_2(t)=C(M_0,E_0,\delta)\|\d_x^2\sqrt\rho_\tau\|_{L^2_x}^\frac23(t).
\]
Last, for the term \textrm{II}, by $p\in C^2[0,+\infty)$, \eqref{eq:bd_log1} and \eqref{eq:bd_log2} it follows a similar argument as before that
\begin{align*}
\textrm{II}\leq & C(M_0,E_0,\delta)H(\rho_\tau|\bar\rho)+\frac14\int_\T\rho_\tau(\d_x^2\log\sqrt\rho_\tau-\d_x^2\log\sqrt{\bar\rho})^2 dx.
\end{align*}
For the integral of electric potentials, we control it by
\begin{align*}
\int_\T(\rho_\tau-\bar\rho)\d_x\log\bar\rho\d_x(V_\tau-\bar V)dx\leq & \|\d_x\log\bar\rho\|_{L^2_x}\|\rho_\tau-\bar\rho\|_{L^2_x}\|\d_x(V_\tau-\bar V)\|_{L^\infty_x}\\
\leq& C(M_0,E_0,\delta)\|\rho_\tau-\bar\rho\|_{L^2_x}^2\sim C(M_0,E_0,\delta)H(\rho_\tau|\bar\rho)(t),
\end{align*}
where we use the Poisson equation for $V_\tau$, $\bar V$ and Poincar\'e inequality
\[
\|\d_x(V_\tau-\bar V)\|_{L^\infty_x}\leq C\|\d_x^2(V_\tau-\bar V)\|_{L^2_x}=C\|\rho_\tau-\bar\rho\|_{L^2_x}.
\]

Summarizing the inequalities above and integrating in time , since $H(\rho_\tau|\bar\rho)(0)=0$ we have
\begin{align*}
H(\rho_\tau|\bar\rho)(t)+\frac14\int_0\int_\T\rho_\tau &(\d_x^2\log\sqrt\rho_\tau-\d_x^2\log\sqrt{\bar\rho})^2dxds\\
\leq & \int_0^t a(s)H(\rho_\tau|\bar\rho)ds+\tau^2\int_0^t[R(\rho_\tau)-R(\rho_\tau,\bar\rho)]ds,
\end{align*}
where
\[
a(t)=C(M_0,E_0,\delta)[a_1(t)+a_2(t)+1],
\]
and by the bounds \eqref{eq:bd_barrho_2} and \eqref{eq:bd_barrho_4}, it follows that
\begin{align*}
\int_0^t a(s)ds=& C(M_0,E_0,\delta)\left(\|\sqrt{\bar\rho}\d_x^2\log\sqrt{\bar\rho}\|_{L^2_{t,x}}^2+\|\d_x^2(\bar\rho\d_x^2\log\sqrt{\bar\rho})\|_{L^2_{t,x}}^2\right)\\
&+C(M_0,E_0,\delta)\left(\int_0^t \|\d_x^2\sqrt\rho_\tau\|_{L^2_x}^\frac23(s)ds+t\right)\\
\leq & C(M_0,E_0,I_0,\delta,T).
\end{align*}

Now we estimate the last term $\tau^2\int_0^t[R(\rho_\tau)-R(\rho_\tau,\bar\rho)]ds$. Recall from Proposition \ref{prop:6.2} that 
\begin{equation}\label{eq:6.13}
\begin{aligned}
\tau^2\int_0^t[R(\rho_\tau)-R(\rho_\tau,\bar\rho)]ds=&-\tau^2\left.\int_\T(\log\rho_\tau-\log\bar\rho)\d_t\rho_\tau dx\right|_{s=0}^{s=t}\\
&+\tau^2\int_0^t\int_\T(\d_x^2\log\rho_\tau-\d_x^2\log\bar\rho)\rho_\tau v_\tau^2dxds\\
&+\tau^2\int_0^t\int_\T(4\rho_\tau\sigma^2_\tau+\d_t\log\bar\rho\d_t\rho_\tau) dxds.
\end{aligned}
\end{equation}
By Cauchy-Schwartz inequality, the first term is controlled by
\begin{align*}
\tau^2\left|\int_\T(\log\rho_\tau-\log\bar\rho)\d_t\rho_\tau dx\right|\leq & C\tau^4\|\sqrt\rho_\tau\sigma_\tau\|_{L^2_x}^2+C^{-1}\|\sqrt\rho_\tau(\log\rho_\tau-\log\bar\rho)\|_{L^2_x}^2.
\end{align*}
It follows from \eqref{eq:bd_I_rxl} that
\[
\tau^4\|\sqrt\rho_\tau\sigma_\tau\|_{L^2_x}^2\leq C(M_0,E_0,I_0,\delta,t)\tau^2,
\]
and by the upper and lower bound of $\rho_\tau$ and $\bar\rho$, we have
\[
\|\sqrt\rho_\tau(\log\rho_\tau-\log\bar\rho)\|_{L^2_x}^2\sim\|\rho_\tau-\bar\rho\|_{L^2_x}^2\sim H(\rho_\tau|\bar\rho).
\]
Therefore we can choose $C$ large such that
\[
\tau^2\left|\int_\T(\log\rho_\tau-\log\bar\rho)\d_t\rho_\tau dx\right|\leq C(M_0,E_0,I_0,\delta,t)\tau^2+\frac12 H(\rho_\tau|\bar\rho).
\]
For the second term in the right hand side of \eqref{eq:6.13}, we have
\begin{align*}
\tau^2\int_0^t\int_\T&(\d_x^2\log\rho_\tau-\d_x^2\log\bar\rho)\rho_\tau v_\tau^2dxds\\
\leq&\frac18\int_0^t\int_\T\rho_\tau (\d_x^2\log\sqrt\rho_\tau-\d_x^2\log\sqrt{\bar\rho})^2dxds+4\tau^4\int_0^t\int_\T\rho_\tau v_\tau^4dxds,
\end{align*}
and by \eqref{eq:bd_I_rxl}, the last term is bounded by 
\[
\tau^4\int_0^t\int_\T\rho_\tau v_\tau^4dxds\leq C(M_0,E_0,I_0,\delta,t)\tau^2.
\]
The last term in \eqref{eq:6.13} are control by \eqref{eq:bd_I_rxl} and \eqref{eq:bd_barrho_3} as
\begin{align*}
\tau^2\int_0^t\int_\T&(4\rho_\tau\sigma^2_\tau+\d_t\log\bar\rho\d_t\rho_\tau) dxds\\
\leq & C\tau^2\int_0^t(\|\sqrt\rho_\tau\sigma_\tau\|_{L^2_x}^2+\|\sqrt\rho_\tau\sigma_\tau\|_{L^2_x}\|\d_t\sqrt{\bar\rho}\|_{L^2_x})ds\leq C(M_0,E_0,I_0,\delta,t)\tau^2.
\end{align*}

Summarizing the arguments above, we obtain
\begin{align*}
\frac12H(\rho_\tau|\bar\rho)(t)+\frac18\int_0\int_\T\rho_\tau &(\d_x^2\log\sqrt\rho_\tau-\d_x^2\log\sqrt{\bar\rho})^2dxds\\
\leq & \int_0^t a(s)H(\rho_\tau|\bar\rho)ds+C(M_0,E_0,I_0,\delta,t)\tau^2,
\end{align*}
then by the integral form of Gronwall inequality and $H(\rho_\tau|\bar\rho)(0)=0$, we obtain
\[
H(\rho_\tau|\bar\rho)(t)+\int_0^t\int_\T\rho_\tau (\d_x^2\log\sqrt\rho_\tau-\d_x^2\log\sqrt{\bar\rho})^2dxds\leq C(M_0,E_0,I_0,\delta,t)\tau^2
\]
for all $0\leq t<\infty$. By the equivalence \eqref{eq:equiv_entr} of $H(\rho_\tau|\bar\rho)$ and $\|\rho_\tau-\bar\rho\|_{L^2_x}$, we conclude the proof.
\end{proof}

\appendix

\section{Hydrodynamical computation of identities \eqref{eq:en_disp} and \eqref{eq:rl_I}}

For completeness of this paper, in this appendix we derive the energy balance law \eqref{eq:en_disp} and the time derivative \eqref{eq:rl_I} of the higher order functional $I(t)$ in a purely hydrodynamic way from the dynamics of the QHD system \eqref{eq:QHD_1d}.
In particular, for nowhere vanishing, GCP solutions we can equivalently write system \eqref{eq:QHD_1d} as
\begin{equation}\label{eq:QHD_apdx}
\left\{\begin{aligned}
&\d_t\rho +\d_x(\rho  v )=0\\
&\d_tv + v \d_xv +\d_xf'(\rho )+\d_x V 
=\frac12\d_x\left(\frac{\d_x^2\sqrt{\rho }}{\sqrt{\rho }}\right)-\frac{1}{\tau}v. 
\end{aligned}\right.
\end{equation}
In this way, identity \eqref{eq:en_disp} corresponds to the classical energy estimate for \eqref{eq:QHD_apdx}, while to prove \eqref{eq:rl_I} we are going to compute the evolution equations for $\mu$ and $\sigma$ first, see \eqref{eq:mu_evol_apdx} and \eqref{eq:sigma_evol_apdx} below.
\newline
In the next proposition, we are actually going to show that for GCP solutions we can rigorously derive the local balance law for the energy density.

\begin{prop}\label{lem:dte_apdx}
Let $(\rho, v)$ be a GCP solution to \eqref{eq:QHD_1d} with finite energy, such that $\rho>0$. Then the energy density 
\[
e(t,x)=\frac12(\d_x\sqrt\rho)^2+\frac12\rho v^2+f(\rho)+\frac12(\d_xV)^2
\]
satisfies the following distributional equation
\begin{equation}\label{eq:en_cons_apdx}
\d_te+\d_x(\rho v\mu-\d_t\sqrt{\rho}\d_x\sqrt{\rho}-V\d_x\d_tV)+\frac{1}{\tau}\rho v^2=0.
\end{equation}
As a consequence, the total energy $E(t)=\int_\T e(t,x)dx$ satisfies the energy balance law
\begin{equation}\label{eq:en_disp_apdx}
E(t)+\frac{1}{\tau}\int_0^t\int_\T\rho v^2dxds=E_0
\end{equation}
\end{prop}
\begin{proof}
We notice from \eqref{eq:QHD_apdx} that the equation for the velocity field can be equivalently written as follows
\begin{equation*}
\d_tv +\d_x\mu +\frac{1}{\tau}v =0,
\end{equation*}
where $\mu $ is the chemical potential defined in \eqref{eq:chem}, and its derivative is interpreted in the $H^{-1}_x$ sense.
By using the expression of $e(t,x)$, we can differentiate it with respect to time and find
\begin{equation*}
\begin{aligned}
\d_te =&\d_x\sqrt{\rho }\d_{t}\d_x\sqrt{\rho }+\left(\frac{1}{2}v^2 +f'(\rho )\right)\d_t\rho +\rho  v \d_tv +\d_xV\d_x\d_tV\\
=&\d_x\left(\d_x\sqrt{\rho }\d_t\sqrt{\rho }\right)
+\left(-\frac12\frac{\d_x^2\sqrt{\rho }}{\sqrt{\rho }}+\frac{1}{2}v ^2+f'(\rho )+V \right)\d_t\rho \\
&+\rho  v \d_tv +V \d_x^2\d_tV +\d_xV\d_x\d_tV.
\end{aligned}
\end{equation*}
By using the QHD equations above and definition \eqref{eq:chem} we then have
\begin{equation*}\begin{aligned}
\d_te =&\d_x\left(\d_x\sqrt{\rho }\d_t\sqrt{\rho }\right)-\mu \d_x(\rho  v )\\
&+\d_x(V \d_x\d_tV )-\rho  v \d_x\mu -\frac{1}{\tau}\rho  v ^2\\
=&\d_x\left(\d_x\sqrt{\rho }\d_t\sqrt{\rho }-\rho  v \mu +V \d_x\d_tV \right)-\frac{1}{\tau}\rho  v^2 .
\end{aligned}
\end{equation*}
Last, the energy balance law \eqref{eq:en_disp_apdx} is obtained by integrating \eqref{eq:en_cons_apdx} on $[0,t]\times\mathbb T$.
\end{proof}

We now compute the time derivative of the functional $I(t)$ defined by \eqref{eq:higher}. However, unlike the results in Section \ref{sect:exist}, which are rigorously obtained by using the mollification of wave functions, we notice that the regularity of GCP solutions is insufficient to pass the limit in the process of mollifying the hydrodynamic functions such as $\rho $, $v $ and $\mu $. It will be another interesting question to figure out whether there exists an essential difference between the formulation of wave functions and hydrodynamic functions. Here we only present the formal computation for smooth solutions.

\begin{prop}\label{prop:dtI_apdx}
Let $(\rho , v )$ be a smooth solution to \eqref{eq:QHD_1d} such that $\rho >0$. Then the time derivative of $I (t)$ is given by
\begin{align*}
\frac{d}{dt}I (t)+\frac{1}{\tau}\int_\T \rho \sigma ^2 dx=&\int_\T\mu \d_tp(\rho )dx+\int_\T\rho \mu \d_tV  dx-\frac{1}{\tau}\int_\T\rho  v ^2\mu  dx,
\end{align*}
\end{prop}

\begin{proof}
By using the formula 
\begin{equation*}
\rho \mu =-\frac14\d_x^2\rho +e +p(\rho )-\frac12(\d_xV )^2+\rho  V 
\end{equation*}
and using \eqref{eq:en_cons_apdx}, we have
\begin{equation*}
\begin{aligned}
\d_t(\rho \mu )=&\d_t\left(e -\frac14\d_x^2\rho +p(\rho )-\frac12(\d_xV )^2+\rho  V \right)\\
=&\d_x(\d_x\sqrt{\rho }\d_t\sqrt{\rho }-\rho  v \mu +V \d_x\d_tV )\\
&-\frac14\d_{x}^2\d_t\rho -\frac{1}{\tau}\rho  v^2 +\d_tp(\rho )-\d_xV \d_x\d_tV +V \d_t\rho +\rho \d_tV .
\end{aligned}
\end{equation*}
Again by using the continuity equation of $\rho$ and the Poisson equation of $V $, we can write
\begin{equation}\label{eq:mu_evol_apdx}
\begin{aligned}
\rho \d_t\mu +\rho  v \d_x\mu =\d_x(\d_x\sqrt{\rho }\d_t\sqrt{\rho })-\frac14\d_x^2\d_t\rho \\
-\frac{1}{\tau}\rho  v^2 +\d_tp(\rho )+\rho \d_tV .
\end{aligned}
\end{equation}
Now, to write the equation for $\sigma $ we may proceed in the following way. By writing the continuity equation as below
\begin{equation*}
\d_t\rho +v \d_x\rho +\rho \d_xv =0,
\end{equation*}
we find the equation for $\log\sqrt{\rho }$, namely
\begin{equation*}
\d_t\log\sqrt{\rho }+v\d_x\log\sqrt{\rho }+\frac12\d_xv =0.
\end{equation*}
By differentiating the last equation with respect to time and by using $\d_tv =-\d_x\mu -\frac{1}{\tau}v $ we then obtain
\begin{equation*}
\d_t\sigma +v \d_x\sigma -\left(\d_x\mu +\frac{1}{\tau}v\right)\d_x\log\sqrt{\rho }-\frac12\d_x\left(\d_x\mu +\frac{1}{\tau}v\right)=0.
\end{equation*}
By multiplying this by $\rho $ and using
\[
\rho  v \d_x\log\sqrt\rho +\frac12\rho \d_xv =\frac12\d_x(\rho  v )=-\rho \sigma ,
\]
we get
\begin{equation}\label{eq:sigma_evol_apdx}
\rho \d_t\sigma +\rho  v \d_x\sigma +\frac{1}{\tau}\rho \sigma =\frac12\d_x(\rho \d_x\mu ).
\end{equation}
Now we can use the equations \eqref{eq:mu_evol_apdx} and \eqref{eq:sigma_evol_apdx} to compute the time derivative of the functional $I (t)$. After integrating by parts we find out that
\begin{align*}
\frac{d}{dt}I (t)=&\frac{1}{2}\int_\T (\mu ^2+\sigma ^2)\d_t\rho  dx-\frac{1}{2}\int_\T \rho  v \d_x(\mu ^2+\sigma ^2)dx\\
&-\frac{1}{2}\int_\T \mu \d_x(\rho\d_x\sigma )dx+\frac{1}{2}\int_\T\sigma \d_x(\rho \d_x\mu )dx\\
&+\int_\T \mu \d_tp(\rho )dx+\int_\T \rho \mu \d_tV  dx-\frac{1}{\tau}\int_\T \rho \sigma ^2dx-\frac{1}{\tau}\int_\T\rho  v ^2\mu  dx\\
=&\int_\T \mu \d_tp(\rho )dx+\int_\T \rho \mu \d_tV  dx-\frac{1}{\tau}\int_\T \rho \sigma ^2dx-\frac{1}{\tau}\int_\T\rho  v ^2\mu dx.
\end{align*}
\end{proof}

\section*{Acknowledgements}
This work was partially supported by the Strategic Priority Research Program of the Chinese Academy of Sciences under the grant XDB0510201 (H.Z.) and by the Italian Ministry of University and Research (MUR) 
within the PRIN project with grant number 2022YXWSLR (P.A.).
The first two authors acknowledge partial support from INdAM-GNAMPA. P.A. is also partially supported by PRIN project 20204NT8W4.

\bigskip
		
		\noindent\textbf{Data availability statement.} This work has no associated data.
		
		\medskip
		
		\noindent\textbf{Conflicts of interest.} The authors have no conflicts of interest to declare that are relevant to the content of this article.


\begin{thebibliography}{100}

\bibitem{AI} M. Ancona, G. Iafrate, \emph{Quantum correction to the equation of state of an electron gas in a semiconductor}, Phys. Rev. B {\bf 39} (1989), 9536-9540.

\bibitem{AT} M.G. Ancona, H.F. Tiersten, \emph{Macroscopic physics of the silicon inversion layer}, Phys. Rev. B {\bf 35}, no. 15 (1987), 7959-7965.


\bibitem{ACLS} P. Antonelli, G. Cianfarani Carnevale, C. Lattanzio, S. Spirito., \emph{ Relaxation Limit from the Quantum Navier-Stokes Equations to the Quantum Drift-Diffusion Equation}, J. Nonlinear Sci., {\bf 31}, 71 (2021).


\bibitem{AM1} P. Antonelli, P. Marcati, \emph{On the finite energy weak solutions to a system in Quantum Fluid Dynamics}, Comm. Math. Phys. {\bf 287} (2009), no 2, 657--686.

\bibitem{AM2} P. Antonelli, P. Marcati, \emph{The Quantum Hydrodynamics system in two space dimensions}, Arch. Rat. Mech. Anal. \textbf{203} (2012), 499--527.

\bibitem{AM_b} P. Antonelli, P. Marcati, \emph{An introduction to the mathematical theory of quantum fluids}, to appear in "New Perspectives in Mathematical Fluid Mechanics", Necas Center Series, 2025..


\bibitem{AMZ1} P. Antonelli, P. Marcati, H. Zheng,  \textit{Genuine Hydrodynamic Analysis to the 1-D QHD system: Existence, Dispersion and Stability},  Comm. Math. Phys. {\bf 383}, (2021), 2113-2161.

\bibitem{AMZ2} P. Antonelli, P. Marcati, H. Zheng,  \textit{An Intrinsically Hydrodynamic Approach to Multidimensional QHD Systems},  Arch. Rat. Mech. Anal. {\bf 247} no 24, (2023).



\bibitem{BW} G. Baccarani, M.R. Wordeman \emph{An investigation of steady-state velocity overshoot in silicon}, Solid-State Electronics, {\bf 28} (1985),407-416.

\bibitem{BP} C. Barenghi, N. Parker, \emph{A primer on quantum fluids}, Springer Briefs in Physics, 08 2016.



\bibitem{Ber} A. L. Bertozzi, \emph{The mathematics of moving contact lines in thin liquid films}, Notices Amer. Math. Soc., {\bf 45} (1998), 689-697.

\bibitem{BP1} A. L. Bertozzi, M. C. Pugh, \emph{Long-wave instabilities and saturation in thin film equations}, Comm. Pure Appl. Math., {\bf 51} (1998),  625-661.

\bibitem{Blub} D. Bresch, M. Colin, K. Msheik, P. Noble, X. Song, \emph{Lubrication and shallow-water systems. Bernis-Friedman and BD entropies}, ESAIM Proc. Surveys {\bf 69} (2020), 1--23.

\bibitem{DLSS1} B. Derrida, J. Lebowitz, E. Speer, and H. Spohn, \emph{Dynamics of an anchored Toom interface}, J. Phys. A, {\bf 24} (1991), 4805-4834.

\bibitem{DLSS2} B. Derrida, J. Lebowitz, E. Speer, and H. Spohn, \emph{Fluctuations of a stationary nonequilibrium interface}, Phys. Rev. Lett., {\bf 67} (1991), 165-168.

\bibitem{DM} D. Donatelli, P. Marcati, \emph{Convergence of Singular Limits for Multi-D Semilinear Hyperbolic Systems to Parabolic Systems}, Transactions of the American Mathematical Society, {\bf 365} No.5 (2004), 2093-2121.

\bibitem{DGJ} J. Dolbeault, I. Gentil, A. J\"ungel, \emph{A logarithmic fourth-order parabolic equation and related logarithmic Sobolev inequalities}, Comm. Math. Sci. {\bf 4} (2006),  No.2, 275--290


\bibitem{G} C. Gardner, \emph{The quantum hydrodynamic model for semiconductor devices}, SIAM J. Appl. Math. {\bf 54} (1994), 409-427.

\bibitem{GST} U. Gianazza, G. Savar\'e, G. Toscani, \emph{The Wasserstein gradient flow of the fisher information and the quantum drift-diffusion equation}, Arch. Ration. Mech. Anal., {\bf 194} (2009), 133-220.

\bibitem{H} F. Haas, \emph{Quantum plasmas: An hydrodynamic approach}, New York: Springer (2011).

\bibitem{HC} E. Heifetz, E. Cohen, \emph{Toward a Thermo-hydrodynamic Like Description of Schr\"odinger Equation via the Madelung Formulation and Fisher Information}, Found. Phys. {\bf 45} (2015), 1514 --1525.

\bibitem{HL} L. Hsiao, T.P. Liu, \emph{Convergence to nonlinear diffusion waves for solutions of a system of hyperbolic
conservation laws with damping}, Comm. Math. Phys. 143 (1992) 599–605.

\bibitem{HP} F. Huang, R. Pan, \emph{Asymptotic behavior of the solutions to the damped compressible Euler equations with vacuum}, J. Diff. Equa. {\bf 220} (2006), 207-233.

\bibitem{J} A. J\"ungel, \emph{Quasi-Hydrodynamic Semiconductor Equations}, Progress in Nonlinear Differential Equations, Birkh\"auser, Basel, 2001.

\bibitem{JP} A. J\"ungel and Y.-J. Peng, \emph{A hierarchy of hydrodynamic models for plasmas: zero relaxation-time limits}, Comm. Part. Diff. Eqs., 24 (1999), 1007–1033.

\bibitem{JLM} A. J\"{u}ngel, H-L. Li, A. Matsumura, \emph{The relaxation-time limit in the quantum hydrodynamic equations for semiconductors}, J. Diff. Equa. {\bf 225} (2) (2006), 440-464.

\bibitem{JMR} A. J\"ungel, M.C. Mariani, D. Rial, \emph{Local existence of solutions to the transient quantum hydrodynamic equations}, Math. Mod. Meth. Appl. Sci. {\bf 12} (2002), 485.


\bibitem{JM} A. J\"{u}ngel, D. Matthes, \emph{The Derrida-Lebowitz-Speer-Spohn Equation: Existence, NonUniqueness, and Decay Rates of the Solutions}, SIAM Journal on Mathematical Analysis, {\bf 39}(6), (2008), 1996--2015.



\bibitem{K} I. Khalatnikov, \emph{An Introduction to the Theory of Superfluidity}, (2000).

\bibitem{Kos} M. D. Kostin, \emph{On the Schr\"odinger-Langevin equation}, J. Chem. Phys. 57:3589 (1972); J. Stat. Phys. 12:145 (1975).

\bibitem{LL} L.D. Landau, E.M. Lifshitz, \emph{Course of Theoretical Physics, vol. 6. Fluid Mechanics}, 2nd edition, Elsevier 1987.

\bibitem{LT} C. Lattanzio, A. Tzavaras, \emph{From gas dynamics with large friction to gradient flows describing diffusion theories}, Commun. Part. Diff. Equ. {\bf 42}, no. 2 (2017), 261--290.

\bibitem{LZZ} H.-L. Li, G. Zhang, K. Zhang, \emph{Semiclassical and relaxation limits of bipolar quantum hydrodynamic model for semiconductors}, J. Diff. Equ. {\bf 245} (2008), 1433–1453.

\bibitem{Mad} E. Madelung, \textit{Quantuentheorie in hydrodynamischer form}, Z. Physik {\bf 40} (1927), 322.

\bibitem{MN} P. Marcati, R. Natalini, \emph{Weak solutions to a hydrodynamic model for semiconductors and relaxation to the drift-diffusion equation}, Arch. Rational Mech. Anal. {\bf 129}, 129-145 (1995).

\bibitem{MMS} P. Marcati, A.J. Milani, P. Secchi, \emph{Singular convergence of weak solutions for a quasilinear nonhomogeneous hyperbolic system}, Manuscripta Math {\bf 60}, 49-69 (1988).

\bibitem{Na} A. B. Nassar, \emph{Fluid formulation of a generalised Schrodinger-Langevin equation} J. Phys. A: Math. Gen. {\bf 18} (1985), L509.

\bibitem{PS} L. Pitaevskii, S. Stringari, \emph{Bose-Einstein condensation and superfluidity}, Clarendon Press, Oxford, (2016).

\bibitem{R} W.V. Roosbroeck, \emph{Theory of flow of electrons and holes in germanium and other semiconductors}, Bell. Syst. Techn. J., {\bf 29} (1950), 560-607.

\bibitem{Y} K. Yasue, \emph{A note on the derivation of the Schr\"odinger-Langevin equation}. J. Stat. Phys. {\bf 16}, 113-116 (1977).

\bibitem {HaoMinE} H. Zheng, \emph{The Pauli problem and wave function lifting: reconstruction of quantum states from physical observables}, Math. in Eng. {\bf 6}, no. 4 (2024), 648--675.

\end{thebibliography}
\end{document}